\documentclass[11pt]{amsart}
\usepackage[foot]{amsaddr}
\usepackage[OT1,T1]{fontenc}
\usepackage{times}
\usepackage{calligra}
\usepackage{aurical}
\usepackage{mathpazo}
\usepackage{mathpple}
\usepackage{ulem}
\usepackage{amsthm}
\usepackage[mathscr]{euscript}

\usepackage{stmaryrd}
\usepackage{amssymb}
\usepackage{amsmath}
\usepackage{titletoc}
\usepackage{extarrows}
\usepackage{varioref}
\usepackage{bm}
\usepackage{amsthm}
\pagestyle{plain}

\usepackage[all,cmtip]{xy}
\usepackage{tikz-cd}
\usepackage{color} 
\usepackage{soul}
\setulcolor{red}
\sethlcolor{yellow}
\setstcolor{blue}
\usepackage{amscd}
\usepackage{xfrac}    

\usepackage{amsfonts}
\usepackage[colorlinks,linkcolor=blue,citecolor=blue, pdfstartview=FitH]{hyperref}
\usepackage{graphicx}
\usepackage{geometry}
\geometry{a4paper,left=3cm,right=3cm,top=2cm,bottom=2cm}

\usepackage{cite}



\numberwithin{equation}{section}

\theoremstyle{definition}
\newtheorem{thm}{Theorem}[section]
\newtheorem{theorem}[thm]{Theorem}

\newtheorem{lemma}[thm]{Lemma}
\newtheorem{corollary}[thm]{Corollary}
\newtheorem{proposition}[thm]{Proposition}


\newtheorem{remark}[thm]{Remark}

\newtheorem{definition}[thm]{Definition}

\newtheorem{assumption}[thm]{Assumption}

\newtheorem{example}[thm]{Example}

\newtheorem{defn-thm}[thm]{Definition-Theorem}

\newenvironment{observe}{\noindent\textcolor{blue}{\textit{Observation}}.}{\hfill \textcolor{blue}{$\blacktriangleleft$}\par}

\newcommand{\sE}{{\mathcal E}}
\newcommand{\sF}{{\mathcal F}}

\newcommand{\sH}{{\mathcal H}}
\newcommand{\sI}{{\mathcal I}}

\newcommand{\sL}{{\mathcal L}}
\newcommand{\sM}{{\mathcal M}}
\newcommand{\sN}{{\mathcal N}}
\newcommand{\sO}{{\mathcal O}}

\newcommand{\sT}{{\mathcal T}}

\newcommand{\sW}{{\mathcal W}}

\newcommand{\sY}{{\mathcal Y}}




\newcommand{\ssF}{{\mathscr F}}

\newcommand{\ssM}{{\mathscr M}}

\newcommand{\A}{\mathbb{A}}

\newcommand{\GG}{\mathbb{G}}

\newcommand{\PP}{{\mathbb P}}
\newcommand{\Q}{{\mathbb Q}}

\newcommand{\Z}{{\mathbb Z}}

\newcommand{\chr}{\operatorname{char}}

\newcommand{\Hom}{\operatorname{Hom}}

\renewcommand{\Im}{\operatorname{Im}}

\newcommand{\Pic}{\operatorname{Pic}}

\newcommand{\Spec}{\operatorname{Spec}}

\newcommand{\Proj}{\operatorname{Proj}}

\newcommand{\Higgs}{\operatorname{Higgs}}

\newcommand{\fppf}{{\operatorname{fppf}}}

\newcommand{\et}{{\operatorname{\acute{e}t}}}
\newcommand{\rk}{{\operatorname{rank}}}

\newcommand{\inj}{\hookrightarrow}

\newcommand{\surj}{\rightarrow\!\!\!\!\!\rightarrow}

\newcommand{\Td}{{\operatorname{Td}}}

\newcommand{\Sym}{{\operatorname{Sym}}}

\newcommand{\BP}{\mathbb{P}}

\newcommand{\btheorem}{\begin{theorem}}
	\newcommand{\etheorem}{\end{theorem}}
\newcommand{\bproposition}{\begin{proposition}}
	\newcommand{\eproposition}{\end{proposition}}
\newcommand{\bdefinition}{\begin{definition}}
	\newcommand{\edefinition}{\end{definition}}
\newcommand{\bcorollary}{\begin{corollary}}
	\newcommand{\ecorollary}{\end{corollary}}
\newcommand{\bproof}{\begin{proof}}
	\newcommand{\eproof}{\end{proof}}
\newcommand{\bremark}{\begin{remark}}
	\newcommand{\eremark}{\end{remark}}
\newcommand{\eexample}{\end{example}}
\newcommand{\bexample}{\begin{example}}
\newcommand{\elemma}{\end{lemma}}
\newcommand{\blemma}{\begin{lemma}}
\newcommand{\bobserve}{\begin{observe}}
	\newcommand{\eobserve}{\end{observe}}

\renewcommand{\phi}{\varphi}

\newcommand{\ee}{\end{eqnarray*}}
\newcommand{\be}{\begin{eqnarray*}}

\newcommand{\beq}{\begin{equation}}
	\newcommand{\eeq}{\end{equation}}

\newcommand{\bd}{\begin{enumerate}}
	\newcommand{\ed}{\end{enumerate}}
\newcommand{\bti}{\begin{tikzcd}}
	\newcommand{\eti}{\end{tikzcd}}

\renewcommand{\hat}{\widehat}

\def\pt{{\scriptscriptstyle\bullet}}

\renewcommand{\bf}[1]{\textbf{#1}}
\newcommand{\tx}[1]{\text{#1}}

\newcommand{\go}[1]{\mathfrak{#1}}

\begin{document}
	\title{A Noether-Lefschetz Theorem for Spectral Varieties with Applications}
	
	\author{Xiaoyu Su$^1$} 
\address{$^1$Yau Mathematical Science Center, Beijing, 100084, China.}
\email{suxiaoyu@mail.tsinghua.edu.cn}
		      
    \author{Bin Wang$^2$ }
	 \address{$^2$Steklov Mathematical Institute of Russian Academy of Sciences, Moscow, 119991, Russia.
	}
	\email{binwang@mi-ras.ru}


	\maketitle
	\begin{abstract}
		We calculate the Picard group of generic (very general) spectral varieties living in the total space of a very ample line bundle over an algebraically closed field $k$ of odd characteristics or characteristic 0. We follow the strategy of Ravindra and Srinivas \cite{RS06,RS09} via formal Picard groups. As an application, we calculate the generic fibers of Hitchin systems over a smooth quintic surface of Picard number 1.
	\end{abstract}
	
	\section{Introduction}
	
	To study the Vafa-Witten theory built by Vafa and Witten in \cite{VW94}, Tanaka and Thomas in \cite{TT18,TT20} constructed the Vafa-Witten invariants on a polarised surface $(S,\sO_S(1))$  using the moduli space of stable $\omega_{S}$-valued Higgs sheaves. This moduli space is equipped with a Lagrangian fibration called the Hitchin map.
	
	To be more precise, let $k$ be an algebraically closed field with $\chr(k)=0$ or $\chr(k)=p\ge 3$ , $X$ be a smooth projective variety over $k$ with $\dim X\geq 2$ and $\sL$ be a very ample line bundle over $X$. A Higgs sheaf (with value in $\sL$) in this paper will be referred to as a pair $(E,\theta)$ where $E$ is a coherent sheaf on $X$, $\theta:E\rightarrow E\otimes\sL$ is an $\sO_{X}$-homomorphism. Similarly, as the curve case, the characteristic polynomial of $\theta$ defines a subvariety in $\Spec\Sym^\pt_{X}(\sL^\vee)$ (the total space of $\sL$) which is finite over $X$, and we call such subvarieties spectral varieties in $\Spec\Sym^\pt_{X}(\sL^\vee)$. By the classical Cayley-Hamilton theorem, $E$ can be treated as a coherent sheaf of generic rank 1 on the corresponding spectral variety. In the particular case that $X$ is a surface and $E$ is a vector bundle, $E$ is indeed a line bundle over the corresponding spectral surface, provided that the spectral surface is smooth. We show this in Proposition \ref{prop:bundle over spectral} by the theory of maximal Cohen-Macaulay modules on dimension two normal Noetherian local rings (cf. \cite{BBG97}, \cite{BD08}), (see Section 5). 
	
	Thus to study the generic fibers of the Hitchin map, we need to know more about Picard groups of generic smooth spectral varieties. Since we are in high dimensional case, the Picard groups, especially their connected component groups, are not as clear as the curve case. Our main result of this paper is to calculate the Picard groups of generic (very general) spectral varieties for the moduli of $\sL$-valued Higgs bundles (with certain ampleness) both in characteristic $0$ and characteristic $p\ge 3$. In particular,  we give a concrete description of generic fibers of such Hitchin systems over surfaces. We hope that our calculation of Picard groups can be used to study Vafa-Witten invariants constructed by Tanaka-Thomas \cite{TT18} via generic fibers of Hitchin systems.

%

	 We now introduce some notations. For a locally free coherent sheaf $\sE$ on $X$, we denote by $\bf V\sE:=\Spec_{X}\Sym_{\sO_X}^\pt\sE$ the associated vector bundle and by $\bf P\sE:= \Proj_X \Sym_{\sO_X}^\pt \sE\xrightarrow{\pi}X$ the associated projective bundle with relative $\sO_{\bf P\sE/X}(1)$ such that $\pi_*\sO_{\bf P\sE/X}(1)\cong \sE$. We can see that $\bf V\sE\subset \hat{\bf V\sE}:=\bf P(\sE\oplus\sO_{X})$ is an open subvariety and we call $D_{\infty}:=\bf P(\sE\oplus\sO_{X})\backslash\bf V\sE\cong\bf P(\sE)$ the infinite divisor. In the particular case that $\sE$ is the line bundle $\sL$, denoting $\bf P(\sL^{\vee}\oplus\sO_{X})$ by $Y$, it is straightforward that:
	\[
	H^{0}(Y,\pi^{*}\sL^{r}\otimes \sO_{Y/X}(r))=\oplus_{i=0}^{r}H^{0}(X,\sL^{i}).
	\]
	Thus there is a natural open immersion $$\Spec(\Sym_k^\pt\oplus_{i=1}^{r}H^{0}(X,\sL^{i})^{\vee})\hookrightarrow \bf P(	H^{0}(Y,\pi^{*}\sL^{r}\otimes \sO(r))^{\vee}).$$
	 \begin{definition}\label{def:spectral var}
	     Given $(X,\sL)$ as before, spectral varieties are divisors corresponds to closed points in  $\bm A:= \Spec(\Sym_k^\pt\oplus_{i=1}^{r}H^{0}(X,\sL^{i})^{\vee})$. And we call $\bm A$ the Hitchin base.
	 \end{definition}
	 
	 Here we can see that the spectral varieties are divisors in the linear system $|\pi^*\sL^r\otimes\sO_{Y/X}(r)|$ which do not intersect with $D_{\infty}$.  For a closed point $s\in\bm A$, we denote $X_{s}$ the corresponding effective divisor on $Y$ and we have a finite surjective map $\pi_{s}:X_{s}\rightarrow X$. We have the following proposition, see also Proposition \ref{prop:bigness of tauto}:
	\begin{proposition}
	    The line bundle $\pi^{*}\sL^{r}\otimes \sO_{Y/X}(r)$ is big and base point free over $Y$ for any $r>0$.
	\end{proposition}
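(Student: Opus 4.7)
The plan is to handle base point freeness and bigness separately, both directly via the explicit decomposition
\[ H^0(Y, \pi^*\sL^r \otimes \sO_{Y/X}(r)) = \bigoplus_{j=0}^r H^0(X, \sL^j) \]
noted just above the proposition. I would pick local homogeneous fiber coordinates $[x : y]$ on $Y = \bf P(\sL^\vee \oplus \sO_X)$ adapted to the splitting $\sL^\vee \oplus \sO_X$, so that the infinite divisor is $D_\infty = \{y = 0\}$ and the zero section of $\bf V\sL \subset Y$ is $\{x = 0\}$. Unwinding $\pi_*(\pi^*\sL^r \otimes \sO_{Y/X}(r)) = \sL^r \otimes \Sym^r(\sL^\vee \oplus \sO_X) \cong \bigoplus_{j=0}^r \sL^j$, the $j$-th summand $H^0(X, \sL^j)$ corresponds locally to sections of the form $\pi^* a_j \cdot x^{r-j} y^j$ with $a_j \in H^0(X, \sL^j)$. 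In particular, the $j = 0$ summand yields the global section $x^r$, while the $j = r$ summand yields sections $\pi^* a_r \cdot y^r$ with $a_r \in H^0(X, \sL^r)$.

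For base point freeness, let $p \in Y$. If $p \in D_\infty$ then $y(p) = 0$, hence $x(p) \neq 0$ by homogeneity of projective coordinates, so the section $x^r$ does not vanish at $p$. If $p \notin D_\infty$, then $y(p) \neq 0$, and since $\sL^r$ is base point free on $X$ (as $\sL$ is very ample), one can pick $a_r \in H^0(X, \sL^r)$ with $a_r(\pi(p)) \neq 0$, so that $\pi^* a_r \cdot y^r$ is nonzero at $p$. This would establish the first claim.

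For bigness, substituting $rm$ for $r$ in the section formula yields
\[ h^0\bigl(Y, (\pi^*\sL^r \otimes \sO_{Y/X}(r))^{\otimes m}\bigr) = \sum_{j=0}^{rm} h^0(X, \sL^j). \]
Since $\sL$ is ample on the projective variety $X$, Serre vanishing together with Hirzebruch--Riemann--Roch gives
\[ h^0(X, \sL^j) = \frac{(\sL^{\dim X})}{(\dim X)!}\, j^{\dim X} + O\bigl(j^{\dim X - 1}\bigr) \]
for $j \gg 0$. Summing over $0 \le j \le rm$, the total grows like $m^{\dim X + 1} = m^{\dim Y}$, which is precisely the bigness estimate.

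I do not foresee a serious obstacle; the only delicate point is matching projective-bundle conventions so that the isomorphism $\pi_*\sO_{Y/X}(r) \cong \Sym^r(\sL^\vee \oplus \sO_X)$ translates faithfully into the local monomial description used above, with very-ample-ness of $\sL$ entering only through base point freeness of $\sL^r$ on $X$ and its asymptotic section growth.
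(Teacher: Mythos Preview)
Your argument is correct. For base-point freeness you and the paper are doing the same thing in different languages: the paper factors the evaluation map of $\pi^*\sL\otimes\sO_{Y/X}(1)$ through the surjection $\pi^*(H^0(X,\sO_X\oplus\sL)\otimes\sO_X)\twoheadrightarrow\pi^*(\sO_X\oplus\sL)\twoheadrightarrow\pi^*\sL\otimes\sO_{Y/X}(1)$, which on unwinding is precisely your observation that the constant section $x^r$ covers $D_\infty$ while the sections $\pi^*a_r\cdot y^r$ cover the complement. For bigness the routes diverge. The paper first notes that base-point free implies nef, then computes the top self-intersection by restricting to the zero section $\sigma(X)$, getting $(\pi^*\sL\otimes\sO_{Y/X}(1))^{\dim Y}=(\sL)^{\dim X}>0$; you instead count sections via $h^0=\sum_{j=0}^{rm}h^0(X,\sL^j)$ and invoke asymptotic Riemann--Roch. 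Your approach is more elementary and yields the volume explicitly, while the paper's intersection-theoretic reduction is a one-line computation once nefness is in hand and adapts verbatim to the case (mentioned in their remark) where $\sL$ is only big and base-point free, whereas your Serre-vanishing step would need to be replaced by the definition of bigness for $\sL$ itself.
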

	In the remaining part of this paper, we fix $r$ as the rank of Higgs bundles we shall consider in Section 5. To abbreviate the notations, we will use $\sW$ to denote the line bundle $\pi^{*}\sL^{r}\otimes \sO_{Y/X}(r)$ and $W$ to denote its global section $H^0(Y,\pi^{*}\sL^{r}\otimes \sO_{Y/X}(r))$.
	
	Our first goal of this paper is to calculate the Picard groups of generic (resp. very general when $\dim X=2$) spectral varieties, i.e. $X_{s}$ for $s$ in an open subvariety of $\bm A$ (resp. for a very general $s$ in $\bm A$). The Picard group of $Y$ can be given by the formula for projective bundles. However, because that $\pi^{*}\sL^{r}\otimes \sO_{Y/X}(r)$ is not ample on $Y$, we can not apply Noether-Lefschetz type theorem directly to calculate the Picard group of $X_{s}$. In characteristic 0, Ravindra and Srinivas in \cite{RS06,RS09} systematically deal with the Noether-Lefschetz type problem with the very ample line bundle replaced by a big and base point free line bundle which are nontrivial generalizations of the original Noether-Lefschetz type theorem. They modify Grothendieck's ideas in \cite{SGA2} and \cite[\S IV]{Har06} and take into consideration of the exceptional locus of the morphism defined by the corresponding big and base point free linear system. We modify their proofs, work over both characteristic 0 and odd characteristics and apply to spectral varieties. In positive characteristics, we always make the following assumption:
	\begin{assumption}\label{asm:main assumption}
	If $\chr(k)=p>0$, we assume that $X$ admits a $W_{2}(k)$ lifting and $p\geq 3$.
	\end{assumption}
	

	Now let us state our theorem concerning the Picard group of a generic spectral variety when $\dim X\ge 3$.
	\begin{theorem}
	   If $\dim X\ge 3$, let $U$ be the open subvariety of the Hitchin base $\bm A$ parametrizing smooth spectral varieties, then under the assumption \ref{asm:main assumption}, for any closed point $s\in U$:
	     $\pi^{*}_s:\Pic(X)\rightarrow\Pic(X_{s})$ is an isomorphism. 
	\end{theorem}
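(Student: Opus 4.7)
My strategy, following the Ravindra--Srinivas framework, is to compute $\Pic(X_s)$ by comparing it with $\Pic(Y)$ through the formal Picard group $\Pic(\hat{Y}_{/X_s})$, and then to extract the answer using the projective-bundle decomposition of $\Pic(Y)$.

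I would first record the projective-bundle computation: since $Y = \mathbf{P}(\sL^\vee \oplus \sO_X)$, one has
\[
\Pic(Y) = \pi^*\Pic(X) \oplus \Z\cdot[\sO_{Y/X}(1)].
\]
The canonical inclusion $\sO_X \hookrightarrow \sL^\vee \oplus \sO_X$ yields a global section of $\sO_{Y/X}(1)$ whose zero locus is exactly $D_\infty$, so $\sO_{Y/X}(1) \cong \sO_Y(D_\infty)$. Because $X_s \subset Y \setminus D_\infty$, the class $[\sO_{Y/X}(1)]$ restricts trivially to $X_s$, and to every infinitesimal thickening $X_s^{(n)}$ as well. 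Hence the restriction map $\Pic(Y) \to \Pic(X_s)$ kills $\Z\cdot[\sO_{Y/X}(1)]$ and descends, via $\Pic(Y)/\Z\cdot[\sO_{Y/X}(1)] \cong \Pic(X)$, to the map $\pi_s^*$. Proving the theorem is therefore equivalent to showing that $\Pic(Y) \to \Pic(X_s)$ is surjective with kernel exactly $\Z\cdot[\sO_{Y/X}(1)]$.

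I would then factor this restriction through the formal completion as
\[
\Pic(Y) \xrightarrow{\alpha} \Pic(\hat{Y}_{/X_s}) \xrightarrow{\beta} \Pic(X_s).
\]
For $\beta$, the conormal sheaf of $X_s$ in $Y$ is $\sW^{-1}|_{X_s}$, and the short exact sequences
\[
0 \to \sW^{-n}|_{X_s} \to \sO_{X_s^{(n+1)}}^{\times} \to \sO_{X_s^{(n)}}^{\times} \to 1
\]
together with Grothendieck's existence theorem reduce bijectivity of $\beta$ to the vanishings $H^i(X_s, \sW^{-n}|_{X_s}) = 0$ for $i = 1, 2$ and all $n \ge 1$. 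Since $\sW$ is big and base-point-free (Proposition~\ref{prop:bigness of tauto}) and $\dim X_s \ge 3$, these vanishings follow from Kodaira vanishing in characteristic zero, and from the Deligne--Illusie / Esnault--Viehweg vanishings in positive characteristic under Assumption~\ref{asm:main assumption}. For $\alpha$, I would adapt the Ravindra--Srinivas treatment of the big-and-base-point-free case: let $\phi: Y \to Y'$ be the birational morphism to the image defined by $|\sW^N|$ for $N \gg 0$, with exceptional locus $E \subsetneq Y$, and arrange (shrinking $U$ if needed) that $X_s$ avoids $E$. Combining the modified formal-algebraic comparison of \cite{SGA2}, \cite[\S IV]{Har06}, \cite{RS06,RS09} with the global vanishings $H^i(Y, \sW^{-n}) = 0$ for $i = 1, 2$ and $n \ge 1$ (again via Kodaira or Deligne--Illusie, using $\dim Y = \dim X + 1 \ge 4$) should give that $\alpha$ is surjective and that $\ker(\alpha) = \Z\cdot[\sO_{Y/X}(1)]$. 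The kernel identification is then used together with the injectivity of $\pi^*: \Pic(X) \to \Pic(Y)$ to conclude that $\pi_s^*$ is injective as well.

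The main obstacle is the step $\alpha$. Because $\sW$ is only big and base-point-free (not ample), the classical Grothendieck--Lefschetz comparison does not apply, and one must carefully control the exceptional locus $E$ of $\phi$ along the lines of Ravindra--Srinivas. Furthermore, unlike the classical Lefschetz setting where restriction to a very ample section gives an isomorphism on Picard groups, in our situation the kernel of $\alpha$ is genuinely non-trivial and must be shown to coincide with $\Z\cdot[\sO_{Y/X}(1)]$; this amounts to showing that any line bundle on $Y$ whose formal restriction to $X_s$ is trivial is of the form $n[\sO_{Y/X}(1)]$, a step that in turn relies on the formal comparison together with the finiteness of $\pi_s$. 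Finally, the positive-characteristic adaptation requires the $W_2(k)$-lifting hypothesis with $p \ge 3$, which is precisely the content of Assumption~\ref{asm:main assumption}; once these vanishings and the RS comparison are in place, the remaining projective-bundle computation is formal.
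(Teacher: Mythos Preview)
Your strategy is correct and is essentially the paper's: factor the restriction $\Pic(Y)\to\Pic(X_s)$ through the formal Picard group and control both arrows. Two points where the paper's execution sharpens your outline: first, the exceptional locus of the morphism defined by $|\sW|$ is precisely $D_\infty$ (Proposition~\ref{prop:birat map}), and spectral varieties miss $D_\infty$ by definition, so no shrinking of $U$ is required; second, rather than appealing abstractly to Ravindra--Srinivas for the map $\alpha$, the paper verifies the \emph{weak} and \emph{effective weak} Lefschetz conditions directly---$\text{Lef}^w(Y,X_s)$ via Fujita vanishing (Proposition~\ref{prop:mod lef cond}), then the kernel computation (Proposition~\ref{prop:ker of pic res}), then $\text{Leff}^w(Y,X_s)$ by algebraizing a two-term presentation of an arbitrary formal line bundle. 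One small caution on your treatment of $\beta$: Assumption~\ref{asm:main assumption} gives a $W_2$-lift of $X$, not of $X_s$, so to invoke Deligne--Illusie you should either first check that $X_s$ lifts (it does, since $Y$ is a projective bundle over $X$ and the section cutting out $X_s$ lifts), or, as the paper effectively does, use $\sW^{-n}|_{X_s}\cong\pi_s^*\sL^{-rn}$ and push the vanishing down to $X$ along the finite map $\pi_s$.
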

	
	In characteristic 0, this is a special case of \cite{RS06}. And following Ravindra and Srinivas' strategy, since we consider quite special linear systems, we can also prove similar results in positive characteristics by a simple modification of Grothendieck's ideas in \cite{SGA2} and \cite[Chapter IV]{Har06}. 
	
	In the case that $\dim X=2$, things are more complicated. Thanks to a stronger cohomological result (see Theorem \ref{thm:global gen} and Corollary \ref{cor:surjection we need}) in our case, under the assumption that the relative Picard scheme $\underline{\Pic}^{0}_{X}$ is smooth, we prove the following analogous result:
	\begin{theorem}\label{thm:pre main isom}
		Let $X$ be a surface,  we assume $k$ is uncountable, $\underline{\Pic}^{0}_{X}$ is smooth, then there are very general members of $s$ in $\bm A$ such that $\pi^{*}_s:\Pic(X)\rightarrow\Pic(X_{s})$ is an isomorphism.
		
	    
	\end{theorem}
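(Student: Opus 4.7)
The plan is to deduce the theorem from the stronger statement that for very general $s\in\bm A$, the restriction
\[
\iota_s^*:\Pic(Y)\longrightarrow\Pic(X_s)
\]
is an isomorphism, proved by the formal Picard group method of Ravindra--Srinivas \cite{RS06,RS09}. The reduction uses that $\pi:Y\to X$ is a $\PP^1$-bundle, so $\Pic(Y)=\pi^*\Pic(X)\oplus\Z\cdot[\sO_{Y/X}(1)]$ and $\pi^*$ is injective (from $\pi_*\sO_Y=\sO_X$). Since spectral divisors are disjoint from $D_\infty\in|\sO_{Y/X}(1)\otimes\pi^*\sL|$, one has $\iota_s^*\sO_{Y/X}(1)\cong\pi_s^*\sL^{-1}$, so $\iota_s^*\Pic(Y)=\pi_s^*\Pic(X)$. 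Combined with the factorization $\pi_s^*=\iota_s^*\circ\pi^*$, an isomorphism on $\iota_s^*$ forces an isomorphism on $\pi_s^*$.

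Next I would factor $\iota_s^*$ through the formal completion $\widehat Y_{/X_s}$ of $Y$ along $X_s$ as $\Pic(Y)\xrightarrow{\alpha}\Pic(\widehat Y_{/X_s})\xrightarrow{\beta}\Pic(X_s)$. The truncated exponential sequence
\[
0\to\sW^{-n}|_{X_s}\to\sO^*_{X_s^{(n+1)}}\to\sO^*_{X_s^{(n)}}\to 1
\]
(valid since $p\ne 2$ by Assumption \ref{asm:main assumption}, and using that the conormal bundle of $X_s$ in $Y$ is $\sW^{-1}|_{X_s}$) reduces both the bijectivity of $\alpha$ and the injectivity of $\beta$ to the vanishings $H^i(X_s,\sW^{-n}|_{X_s})=0$ for $i=1,2$ and all $n\ge 1$. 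These are the cohomological core of the argument: in characteristic $0$ they follow from Kawamata--Viehweg vanishing applied to the big and base-point-free line bundle $\sW$, while in characteristic $p\ge 3$ they follow from the Deligne--Illusie variant enabled by the $W_2(k)$-lift of $X$. In the present setup these inputs are repackaged by Theorem \ref{thm:global gen} and Corollary \ref{cor:surjection we need}, and yield that $\alpha$ is an isomorphism and $\beta$ is injective.

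The main step, and the main obstacle, is showing $\beta$ is surjective for very general $s$. For this I would form the universal smooth spectral family $\sX_U\to U\subseteq\bm A$ together with its relative Picard scheme $\underline{\Pic}_{\sX_U/U}$, a countable disjoint union of quasi-projective $U$-schemes; the smoothness of $\underline{\Pic}^0_X$ guarantees, via control of $\underline{\Pic}^0_{X_s}$ through $\pi_{s*}$ and the norm map, that the components of $\underline{\Pic}_{\sX_U/U}$ are honest algebraic $U$-schemes rather than merely formal families. A class $\sM\in\Pic(X_s)$ lies in the image of $\iota_s^*$ exactly when its corresponding component $Z\subseteq\underline{\Pic}_{\sX_U/U}$ dominates $U$---so that the class extends horizontally and, by the Lefschetz step, lifts to $\Pic(Y)$. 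Otherwise the image of $Z$ in $U$ is a proper closed subvariety, and since there are only countably many components and $k$ is uncountable, the complement in $\bm A$ of the union of these proper subvarieties is nonempty. Any $s$ in this complement satisfies $\iota_s^*$, hence $\pi_s^*$, an isomorphism. The primary difficulty is implementing this countable-union argument when $\sW$ is only big and base-point free rather than very ample: one must adapt \cite{RS09} to analyze the exceptional locus of the morphism $Y\to\bP(W^\vee)$ defined by $|\sW|$, and the smoothness of $\underline{\Pic}^0_X$ is used precisely to prevent non-reduced or obstructed components of $\underline{\Pic}_{\sX_U/U}$ that would otherwise break the argument.
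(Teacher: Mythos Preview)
Your overall architecture is right, but there are two genuine gaps.

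First, the vanishing $H^2(X_s,\sW^{-n}|_{X_s})=0$ is \emph{false} when $\dim X=2$. Since $\sW|_{X_s}\cong\pi_s^*\sL^{r}$ is ample on the surface $X_s$, Serre duality gives $H^2(X_s,\sW^{-n}|_{X_s})\cong H^0(X_s,\omega_{X_s}\otimes\pi_s^*\sL^{rn})^\vee\neq 0$. This non-vanishing is precisely what separates the surface case from $\dim X\ge 3$: only $H^1$ vanishes, so the truncated exponential sequence gives you injectivity of $\beta:\Pic(\hat Y_s)\to\Pic(X_s)$ and nothing more. There is a genuine obstruction in $H^2$ to lifting a line bundle from $X_{s,m}$ to $X_{s,m+1}$. (Separately, $\alpha$ is never an isomorphism: its kernel is $\Z\cdot D_\infty$. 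Its \emph{surjectivity} is what the weak effective Lefschetz condition $\mathrm{Leff}^w(Y,X_s)$ supplies, and that argument---algebraization of formal line bundles via Fujita vanishing---is independent of the $H^i(X_s,\sW^{-n}|_{X_s})$ vanishings you invoke.)

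Second, and more seriously, your ``main step'' conflates horizontal and vertical extension. If the component of $\underline{\Pic}_{\sX_U/U}$ through $[\sM]$ is smooth over $U$ at that point (this is what the smoothness hypothesis on $\underline{\Pic}^0_X$ actually buys, via Lemma \ref{lem:smoothness of pic}, not representability), then the infinitesimal criterion lets you lift $\sM$ to all thickenings $\sY_{s,m}$ of $X_s$ inside the \emph{universal family}. But surjectivity of $\beta$ asks that $\sM$ lift to the thickenings $X_{s,m}$ of $X_s$ inside $Y$; these are different infinitesimal neighborhoods, related by $p^*\sI_s\subset q^*\mathfrak m_s$. The paper bridges this via the Formal Noether--Lefschetz condition (Proposition \ref{prop:FNL}): one proves that the obstruction map
\[
H^2(X_s,\sI_s^m/\sI_s^{m+1})\longrightarrow H^2(\sY_s,q^*\mathfrak m_s^m/q^*\mathfrak m_s^{m+1})
\]
is injective (Proposition \ref{prop:inj for every step}), so vanishing of the obstruction in the universal direction forces its vanishing in the $Y$-direction. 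It is \emph{this} injectivity, established through a Leray spectral sequence computation, that actually consumes Theorem \ref{thm:global gen} and Corollary \ref{cor:surjection we need}. Your sketch omits this comparison of obstruction spaces entirely, and without it the dominance-of-components argument does not produce $\sM\in\operatorname{Im}(\beta)$.
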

    For example, if $H^{1}(X,\sO_{X})=0$ by \cite[Corollary 9.5.13]{FGA} or if $H^{2}(X,\sO_{X})=0$ by \cite[Proposition 9.5.19]{FGA}, $\underline{\Pic}^{0}_{X}$ is smooth. For more examples, see \cite{Lie09}. As we mentioned in the beginning of the introduction, this theorem can be used to study generic fibers of Hitchin maps for moduli of $\sL$-valued Higgs bundles on $X$. In the last section, we give an application to Hitchin systems over a smooth quintic surface of Picard number one both in characteristic $0$ and odd characteristics, see Theorem \ref{thm:generic fibers are Hilbert schemes}. Roughly speaking, we show that
    \begin{theorem}
       Let $X$ be a smooth quintic surface of Picard number 1. We assume the rank of the torsion-free Higgs sheaves is greater than 3. Then generic fibers of Hitchin maps, if non-empty, are connected and isomorphic to Hilbert scheme of points of corresponding spectral surfaces. 
    \end{theorem}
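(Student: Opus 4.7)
My plan is to combine the Picard-group rigidity of Theorem \ref{thm:pre main isom} with the spectral/BNR-type correspondence of Proposition \ref{prop:bundle over spectral}, and then invoke Fogarty's theorem on the Hilbert scheme of points of a smooth surface.

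First I would verify the hypotheses of Theorem \ref{thm:pre main isom} for $X$. A smooth quintic $X\subset\PP^3$ satisfies $H^1(X,\sO_X)=0$ by the Lefschetz hyperplane theorem (equivalently, by the cohomology long exact sequence attached to $0\to\sO_{\PP^3}(-5)\to\sO_{\PP^3}\to\sO_X\to 0$). By the remark immediately after Theorem \ref{thm:pre main isom} this forces $\underline{\Pic}^0_X$ to be smooth; combined with the Picard-number-one hypothesis $\Pic(X)\cong\Z$, Theorem \ref{thm:pre main isom} then gives, for a very general $s\in\bm A$, an isomorphism $\pi_s^\ast:\Pic(X)\xrightarrow{\sim}\Pic(X_s)$, so that $\Pic(X_s)\cong\Z$ and $\Pic^0(X_s)=0$. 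Concurrently, a Bertini-type argument on the linear subsystem $\bm A\subset|\sW|$, fed by the bigness and base-point-freeness of $\sW$ from Proposition \ref{prop:bigness of tauto}, should guarantee smoothness of the generic $X_s$; the hypothesis $r>3$ is the natural place where the required positivity is uniformly ensured, both for Bertini smoothness and as input to the Noether-Lefschetz machinery.

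Next I would use Proposition \ref{prop:bundle over spectral} together with its extension to the torsion-free setting (via the theory of rank-one maximal Cohen-Macaulay modules on the smooth surface $X_s$) to identify the Hitchin fiber over $s$ with the moduli of rank-one torsion-free sheaves $F$ on $X_s$ having numerical invariants fixed by the Higgs data. Because $X_s$ is a smooth surface, each such $F$ fits into a short exact sequence
\[
0\to F\to F^{\vee\vee}\to Q\to 0,
\]
with $F^{\vee\vee}$ a line bundle and $Q$ a length-$n$ coherent sheaf, and since $\Pic^0(X_s)=0$ the class $c_1(F)=c_1(F^{\vee\vee})$ pins $F^{\vee\vee}$ down uniquely up to isomorphism. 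The assignment $F\mapsto Z:=\Supp(Q)$ (with its natural scheme structure) therefore produces a functorial bijection between the Hitchin fiber and $\operatorname{Hilb}^n(X_s)$, with inverse $(Z,L_0)\mapsto L_0\otimes I_Z$ for the unique line bundle $L_0$ of the prescribed $c_1$.

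To upgrade this bijection to an isomorphism of moduli spaces I would promote the construction to flat families, using that formation of the reflexive hull commutes with flat base change for rank-one torsion-free sheaves on a smooth surface; the explicit inverse then identifies the two moduli spaces scheme-theoretically. Fogarty's theorem that $\operatorname{Hilb}^n(X_s)$ is smooth and irreducible of dimension $2n$ delivers the asserted connectedness for free. The step I expect to be the principal obstacle is the scheme-theoretic upgrade of Proposition \ref{prop:bundle over spectral} to the torsion-free case---checking that the bijection genuinely refines to an isomorphism of moduli functors rather than merely of $k$-points---together with a careful accounting of the rank hypothesis $r>3$ at each positivity and genericity step, and confirming that the numerical invariants on the Higgs side really single out a definite value of $n$.
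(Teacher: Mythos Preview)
Your proposal is correct and follows essentially the same approach as the paper: use the Noether--Lefschetz result to get $\Pic(X_s)\cong\Z$ for very general $s$, invoke the spectral correspondence to identify the fiber with rank-one torsion-free sheaves on $X_s$, decompose each such sheaf as a line bundle tensored with an ideal sheaf of points, and observe that the discreteness of $\Pic(X_s)$ forces the line bundle to be unique once the Chern data on $X$ is fixed.

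The only noteworthy difference is one of emphasis rather than substance. The paper carries out the Grothendieck--Riemann--Roch computation explicitly (using the Todd classes of $X$ and $X_s$ computed earlier in Section~5) to write down exact equations expressing $\mu$ and $\ell_\Delta$ in terms of $c_1(\sE)$ and $\operatorname{ch}_2(\sE)$, thereby verifying directly that both are uniquely determined; you instead phrase this abstractly via the double-dual sequence and leave the numerical pinning-down of $n$ as a step to be checked. Your invocation of Fogarty's theorem for the irreducibility of $\operatorname{Hilb}^n(X_s)$ makes explicit what the paper leaves implicit. Your concern about the scheme-theoretic upgrade is legitimate but also not addressed in the paper's own proof, which is content with the set-theoretic identification; in practice the standard argument (tensoring by the unique line bundle gives an isomorphism of moduli functors between rank-one torsion-free sheaves with fixed determinant and ideal sheaves of colength $n$) handles this without difficulty.
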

    
    Let us now indicate how this relates to previous work. 
    
    It is a long-lasting question in algebraic geometry to calculate Picard groups of a divisor known as Noether-Lefschetz type theorem. Grothendieck defines the so-called (effective) Lefschetz condition in \cite{SGA2}  to solve Noether-Lefschetz problems systematically by formal geometry. This is also explained in detail by Hartshorne in \cite{Har70}. In our situation, the linear system considered is not ample but big and base-point free, so we can not apply Grothendieck-Lefschetz theorem directly. However, in characteristic 0, Grothendieck's idea are generalized by Ravindra and Srinivas to big and base-point free linear systems in \cite{RS06} when $\dim X\ge 3$ and in \cite{RS09} when $\dim X=2$. The assumption of characteristic 0 is essential because the authors use the resolution of singularities, exponential exact sequence for formal Picard groups and also when $\dim X=2$ the smoothness of Picard varieties is needed which naturally holds in characteristic 0. It is obvious that our calculation of Picard groups follows Ravindra and Srinivas's strategy. To be more precise, by our assumption of $W_2$-lifting in positive characteristics, when $\dim X\ge 3$, Ravindra and Srinivas' method can be applied directly, we here present a complete and simpler proof, since the linear system we consider is quite special though not very ample.  When $\dim X=2$, the exponential exact sequence for Picard groups can not hold for arbitrary thickening, we need to use a step by step lifting of line bundles to formal neighbourhood to verify Ravindra and Srinivas' ``\emph{Formal Noether-Lefschetz}" condition (see \cite[Definition 1]{RS09}). To make the step-by-step induction work, we need a stronger cohomological property than ``global-generation'' in \cite[Theorem 2]{RS09} which fortunately holds for the linear systems we consider here, see Theorem \ref{thm:global gen}. 
    
    We here must mention the very recent and deep results of Lena Ji \cite{Ji21} of Noether-Lefschetz type theory for normal threefolds in positive characteristics. In her thesis \cite{Ji21}, Ji uses a quite different method to calculate Picard groups of divisors lying in a linear system with sufficient ampleness on normal threefolds, see \cite[Corollary 3.4.2]{Ji21}. Moreover, she does not need to assume the existence of $W_2$ lifting and the smoothness of certain Picard variety. Our results here provided some special cases, i.e., various normal varieties other than $\BP^{3}$, that are not covered by Ji's \cite{Ji21}, see more in Remark \ref{rmk:comparison with Ji's result}.
    
    As we mentioned in the beginning of the introduction, via the nilpotent cones of Hitchin maps, Tanaka and Thomas \cite{TT18} can construct Vafa-Witten invariants in an algebro-geometric manner. Our original purpose was to study the generic fibers of corresponding Hitchin maps instead of nilpotent cones. By the classical BNR correspondence, Higgs bundles on base varieties can be treated as coherent sheaves on spectral varieties. By the theory of maximal Cohen-Macaulay modules in \cite{BBG97}, and the deep theory of maximal Cohen-Macaulay modules (of generic rank 1) over a  normal Noetherian local ring of dimension 2, the study of generic fibers turns into a study of Picard groups of spectral surfaces.

    We now close this section by briefly describing how the paper is organized. In the second section, we prove certain cohomological results which we will use later. In the third section, we prove our results on Picard groups when $\dim X\ge 3$. In the fourth section, we prove the results for very general spectral surfaces when $\dim X=2$. In the last section, we give an application to generic fibers of Hitchin systems over a smooth quintic surface of Picard number one.
    \vspace{10pt}
    
    \noindent \textbf{Acknowledgement}:
    The work starts during the second author's visit at Beijing International Center for Mathematical Research. The authors thank Qizheng Yin for his great support and very useful discussions during the visit. The work of Bin Wang was performed at the Steklov International Mathematical Center, Moscow, Russia and
supported by the Ministry of Science and Higher Education of the Russian Federation (agreement no.  075-15-2019-1614 ). The first author acknowledges support from Tsinghua Postdoctoral daily Foundation. And this manuscript was written during the first author's visit at the Institute for Advanced Study in Mathematics at Zhejiang University. We express our special thanks to the institute for its wonderful environment and support.

    \section{Vanishing Properties}
	In this section, we first start with some basic properties of projective bundles. 
	As in the introduction, we denote by $\hat{\bf V\sE}:= \bf P(\sE\oplus\sO_X)$ the associated projective compactification of $\sE$.
	
	The inclusion $\sE\subset \sE \cdot e\oplus\sO_X\cdot t$ induces the exact sequence of graded sheaves
	$$0\to(t)\to\Sym^\pt\left( \sE\cdot e\oplus\sO_X\cdot t\right)\rightarrow \Sym^\pt \sE\to 0,$$
	the isomorphism $\Sym^\pt \sE\cong \Sym^\pt( \sE\cdot\frac{e}{t})$,  
	and the surjective quotient map $\sE\oplus\sO_X\surj \sO_X$. 
	
	Then one has the open immersion $\bf V\sE\subset \bf P(\sE\oplus\sO_X)$ (locally given by $e\mapsto [e:1]$) with its complement  $\bf P \sE\subset \bf P(\sE\oplus\sO_X\cdot t)$, which is the zero locus of the degree $1$ homogeneous section $t$ in $\sO_{\bf P(\sE\oplus\sO_X)/X}(1)$.We call it the infinity divisor denoted by $D_{\infty}$. We also have the zero section $\sigma:X\to \bf P(\sE\oplus \sO_X)$ , which factors as the zero section of the open immersion $$\sigma: X\xrightarrow{\bm 0}\bf V\sE\subset \bf P(\sE\oplus\sO_X).$$ 
	Thus one has $\bf V\sE \sqcup D_\infty = \bf P(\sE\oplus\sO_X)$ and $\sigma(X)\subset\bf V\sE$ via the zero section. Besides there is a (line bundle) projection $p_{-\bm 0}:\hat{\bf V\sE}-\sigma(X)\to D_\infty$.
	
%
%
%
%
%
%

Let us consider the case $\sE=\sL^\vee$. Then
$\pi:Y:=\bf P(\sL^\vee\oplus\sO_X)\to X$ and $\pi:D_\infty\cong X$. 
As we specify the line bundle $\sL$, we may denote $\sO_{\bf P(\sL^{\vee}\oplus \sO)/X}(1)$ by $\sO_{Y/X}(1)$ for simplicity. It is easy to see that $\sO_{Y/X}(1)|_{\bf V\sL^\vee}=\sO_{\bf V\sL^\vee}, \sO_{Y/X}(1)|_{D_\infty}=(\pi|_{D_{\infty}})^{*}\sL^\vee$.

%
%
%
%
%
%
%
%
%
%
%
%

%
%
%
%

\begin{remark}
	 It is straightforward to see that $\pi^*\sL\otimes\sO_{Y/X}(1)|_{D_\infty}$ is trivial, and thus it is not ample. 
\end{remark}

%
    \begin{proposition}\label{prop:bigness of tauto}
    The line bundle $\pi^*\sL\otimes\sO_{Y/X}(1)$ is big and base-point free on $Y$. 
    \end{proposition}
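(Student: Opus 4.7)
The plan is to push everything down to $X$: I would compute $\pi_*\sM^{\otimes n}$ for all $n\ge 0$ via the projection formula together with standard formulas for the $\PP^1$-bundle $\pi\colon Y=\bf P(\sL^\vee\oplus\sO_X)\to X$, and then read off base-point freeness and bigness directly from the resulting description on $X$, exploiting that $\sL$ is very ample by hypothesis. Here I write $\sM:=\pi^*\sL\otimes\sO_{Y/X}(1)$.

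For base-point freeness, the paper's convention $\pi_*\sO_{Y/X}(1)\cong\sL^\vee\oplus\sO_X$ together with the projection formula gives $\pi_*\sM\cong\sL\otimes(\sL^\vee\oplus\sO_X)\cong\sO_X\oplus\sL$, which is globally generated on $X$ because $\sL$ is very ample. On $Y$ there is the tautological surjection $\pi^*(\sL^\vee\oplus\sO_X)\twoheadrightarrow\sO_{Y/X}(1)$; twisting by $\pi^*\sL$ yields $\pi^*\pi_*\sM\twoheadrightarrow\sM$. Composing with the pullback of the evaluation $H^0(X,\pi_*\sM)\otimes_k\sO_X\twoheadrightarrow\pi_*\sM$ produces a surjection $H^0(Y,\sM)\otimes_k\sO_Y\twoheadrightarrow\sM$, so $\sM$ is globally generated.

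For bigness, the same projection formula with $\pi_*\sO_{Y/X}(n)\cong\Sym^n(\sL^\vee\oplus\sO_X)\cong\bigoplus_{i=0}^n\sL^{-i}$ for $n\ge 0$ gives $\pi_*\sM^{\otimes n}\cong\bigoplus_{j=0}^n\sL^{\otimes j}$, while $R^{j}\pi_*\sO_{Y/X}(n)=0$ for all $j\ge 1$ and $n\ge 0$ (relative $\PP^1$ with nonnegative twist). Leray then collapses and
\[
h^0(Y,\sM^{\otimes n})=\sum_{j=0}^n h^0(X,\sL^{\otimes j}).
\]
Since $\sL$ is ample, $h^0(X,\sL^{\otimes j})\sim (\sL^d)\,j^d/d!$ as $j\to\infty$, where $d=\dim X$ and $(\sL^d)>0$; summing yields $h^0(Y,\sM^{\otimes n})\sim \tfrac{(\sL^d)}{(d+1)!}\,n^{d+1}$. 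As $\dim Y=d+1$, this polynomial growth of maximal degree is precisely the standard characterization of bigness for the nef line bundle $\sM$, completing the argument.

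I expect no substantive obstacle: the only points that need care are fixing the projective bundle convention so that $\pi_*\sM$ comes out as $\sO_X\oplus\sL$ (not $\sO_X\oplus\sL^{-1}$, which would fail to be globally generated), and invoking the classical equivalence between the asymptotic $\Theta(n^{\dim Y})$ growth of $h^0(\sM^{\otimes n})$ and bigness of a nef line bundle.
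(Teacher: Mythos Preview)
Your proof is correct. The base-point-freeness half is essentially identical to the paper's argument: both compute $\pi_*\sM\cong\sO_X\oplus\sL$ and factor the evaluation map through $\pi^*\pi_*\sM\twoheadrightarrow\sM$. For bigness, however, you take a different route. The paper observes that, being globally generated, $\sM$ is nef, and then computes the top self-intersection directly: the zero section $\sigma(X)\subset Y$ is a member of $|\sM|$ with $\sM|_{\sigma(X)}\cong\sL$, so $\sM^{\dim X+1}=(\sL)^{\dim X}>0$. Your argument instead pushes $\sM^{\otimes n}$ down to $X$, obtains $h^0(Y,\sM^{\otimes n})=\sum_{j=0}^n h^0(X,\sL^{\otimes j})$, and reads off the maximal-order growth in $n$ from asymptotic Riemann--Roch for the ample $\sL$. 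Both are standard and equally valid; the paper's intersection computation is a touch shorter and gives the self-intersection number on the nose, while your computation has the small side benefit of exhibiting $H^0(Y,\sM^{\otimes n})$ explicitly (a formula the paper uses elsewhere anyway).
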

	\begin{remark}
	This proposition still holds if we only assume $\sL$ is big and base point free. Thus in characteristic 0, when $\dim X\ge 3$,
	 we can apply \cite[Theorem 2]{RS06} to the case that the line bundle $\sL$ is only assumed to be big and base point free. 
	\end{remark}
    
    \begin{proof}
    	
%

    	Since $\sL$ is base-point free on $X$, and $\sL\otimes \pi_*\sO_{Y/X}(1)\cong \sO_X\oplus\sL$, we have the surjective evaluation map: 
    	\[e:H^0(X,\sO_X\oplus\sL)\otimes\sO_X\surj \sO_X\oplus\sL.\] Pullback it via $\pi^*$, we have the factorization of the evaluation map
    	\[	\bti H^{0}(Y,\pi^{*}\sL\otimes\sO_{Y/X}(1))\otimes_k\sO_{Y}\arrow[r,"{\tx{ev}_{\pi^*\sL\otimes\sO_{Y/X}(1)}}"]&\pi^{*}\sL\otimes\sO_{Y/X}(1)\\
    	\pi^*(H^{0}(X,\sL\otimes \pi_*\sO_{Y/X}(1))\otimes_k \sO_{X})\arrow[r,twoheadrightarrow,"{\pi^*e}"]\arrow[u,equal]&
    	\pi^*\sL	\otimes_{\sO_Y}		\pi^*\pi_*\sO_{Y/X}(1)\arrow[u,twoheadrightarrow]
    	\eti,\]
    	which shows that $\pi^{*}\sL\otimes\sO_{Y/X}(1)$ is base-point free. Hence $\pi^{*}\sL\otimes\sO_{Y/X}(1)$ is nef. To show that $\pi^{*}\sL\otimes\sO_{Y/X}(1)$ is big, we just have to check that the intersection number $(\pi^{*}\sL\otimes\sO_{Y/X}(1))^{\dim X+1}>0$.  But $\sigma (X)$ is a zero divisor of $\pi^{*}\sL\otimes\sO_{Y/X}(1)$, and $\pi^{*}\sL\otimes\sO_{Y/X}(1)|_{\sigma(X)}\cong \sL$. Then $(\pi^{*}\sL\otimes\sO_{Y/X}(1))^{\dim X+1}=(\sL)^{\dim X}>0$ which follows from the bigness of $\sL$ on $X$.
    	
    \end{proof}
    \begin{lemma}\label{lem:can bund formula}
    	Let $\omega_{Y}$ be the canonical line bundle of $Y$, then:
    	\[
    	\omega_{Y}\cong \pi^{*}(\omega_{X}\otimes\sL^{\vee})\otimes\sO_{Y/X}(-2)
    	\]
    \end{lemma}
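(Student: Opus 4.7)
The plan is to apply the standard relative canonical bundle formula for a projective bundle, specialised to the rank two bundle $\sE = \sL^\vee\oplus\sO_X$, and then combine it with the adjunction of relative and absolute canonical bundles.

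First I would write down the relative Euler sequence for $\pi: Y = \mathbf{P}(\sL^\vee\oplus\sO_X)\to X$, namely
\[
0\to \Omega_{Y/X}\otimes\sO_{Y/X}(1)\to \pi^*(\sL^\vee\oplus\sO_X)\to \sO_{Y/X}(1)\to 0,
\]
which uses the Grothendieck convention $\pi_*\sO_{Y/X}(1)\cong \sL^\vee\oplus\sO_X$ that is implicit in the setup of the paper. Taking determinants on the middle and outer terms, and using that $\Omega_{Y/X}$ has rank one, I obtain
\[
\det\pi^*(\sL^\vee\oplus\sO_X)\;\cong\;\bigl(\Omega_{Y/X}\otimes\sO_{Y/X}(1)\bigr)\otimes \sO_{Y/X}(1)\;=\;\omega_{Y/X}\otimes\sO_{Y/X}(2).
\]
Since $\det(\sL^\vee\oplus\sO_X)=\sL^\vee$, this rearranges to
\[
\omega_{Y/X}\;\cong\;\pi^*\sL^\vee\otimes\sO_{Y/X}(-2).
\]

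Next I would invoke the smoothness of $\pi$ and the relative cotangent exact sequence, which yields the standard isomorphism $\omega_Y\cong \omega_{Y/X}\otimes\pi^*\omega_X$. Substituting the formula for $\omega_{Y/X}$ computed above gives
\[
\omega_Y\;\cong\;\pi^*\omega_X\otimes\pi^*\sL^\vee\otimes\sO_{Y/X}(-2)\;=\;\pi^*(\omega_X\otimes\sL^\vee)\otimes\sO_{Y/X}(-2),
\]
which is the desired formula.

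There is no real obstacle here: the argument is a direct computation in the Picard group of a $\mathbf{P}^1$-bundle, and the only care needed is to fix the sign/convention for $\sO_{Y/X}(1)$ consistently with the one used earlier in the paper (where $\pi_*\sO_{Y/X}(1)\cong \sL^\vee\oplus\sO_X$, and the zero section $\sigma$ lies in $\mathbf{V}\sL^\vee$ rather than at infinity). Once this convention is fixed, the rank two Euler sequence pins down all the signs, and the rest is bookkeeping.
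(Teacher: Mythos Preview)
Your proof is correct and follows essentially the same approach as the paper: both use the relative Euler sequence for $\pi:Y=\mathbf{P}(\sL^\vee\oplus\sO_X)\to X$ to compute $\omega_{Y/X}\cong\pi^*\sL^\vee\otimes\sO_{Y/X}(-2)$, and then combine this with $\omega_Y\cong\omega_{Y/X}\otimes\pi^*\omega_X$. The only cosmetic difference is that the paper writes the Euler sequence twisted by $\sO_{Y/X}(-1)$, while you twist by $\sO_{Y/X}(1)$; the determinant computation is identical.
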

    \begin{proof}
    	By the relative Euler exact sequence:
    	\[
    	0\to \Omega^1_{Y/X}\to \pi^*(\sL^\vee\oplus\sO_X)\otimes\sO_{Y/X}(-1)\to \sO_{Y}\to 0.
    	\]
    	we can calculate that $\omega_{Y}\cong \det(\Omega_{Y/X}^1)\otimes\pi^{*}\omega_{X}\cong \pi^{*}(\omega_{X}\otimes\sL^{\vee})\otimes\sO_{Y/X}(-2)$.
    \end{proof}
    
\begin{lemma}\label{lem: a vanishing theorem}
        $H^{i}(Y,\pi^{*}\sL^{-n}\otimes\sO_{Y/X}(-n))=0$ for $i<3$ and any $n\geq 1$.
    \end{lemma}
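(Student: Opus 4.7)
The plan is to apply the Leray spectral sequence for $\pi \colon Y \to X$ together with the projection formula, reducing the computation to vanishing statements for negative powers of $\sL$ on $X$. Since $\pi$ has relative dimension $1$ and $n \ge 1$, we have $\pi_* \sO_{Y/X}(-n) = 0$, so the only nonzero row of the Leray spectral sequence is $q=1$, giving
\[
H^i(Y, \pi^*\sL^{-n} \otimes \sO_{Y/X}(-n)) \;\cong\; H^{i-1}\bigl(X, \sL^{-n} \otimes R^1\pi_* \sO_{Y/X}(-n)\bigr)
\]
for $i \ge 1$, and $0$ for $i = 0$. Here I have already used the projection formula to pull $\pi^*\sL^{-n}$ out of the direct image.

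The first substantive step is to compute $R^1 \pi_* \sO_{Y/X}(-n)$. By relative Serre duality on the $\mathbf{P}^1$-bundle $\pi$, together with the formula $\omega_{Y/X} \cong \pi^*\sL^\vee \otimes \sO_{Y/X}(-2)$ from Lemma \ref{lem:can bund formula}, one obtains
\[
R^1 \pi_* \sO_{Y/X}(-n) \;\cong\; \bigl(\pi_* (\pi^*\sL^\vee \otimes \sO_{Y/X}(n-2))\bigr)^\vee \;\cong\; \Bigl(\sL^\vee \otimes \Sym^{n-2}(\sL^\vee \oplus \sO_X)\Bigr)^\vee \;\cong\; \bigoplus_{i=1}^{n-1} \sL^i,
\]
where the sum is empty (so zero) when $n = 1$. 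Twisting by $\sL^{-n}$ then reduces the desired vanishing to
\[
H^{0}(X,\sL^{-k}) = 0 \quad\text{and}\quad H^{1}(X,\sL^{-k}) = 0 \quad\text{for every } 1 \le k \le n-1.
\]

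The first vanishing is immediate, since $\sL$ is (very) ample and $X$ is a connected projective variety. The second is the Kodaira vanishing theorem for the ample line bundle $\sL$ on the smooth projective variety $X$ of dimension $\ge 2$: by Serre duality it is equivalent to $H^{\dim X - 1}(X, \omega_X \otimes \sL^k) = 0$, which is classical in characteristic zero and, in odd characteristic, is supplied by the Deligne-Illusie theorem. The only real obstacle is this last invocation of Kodaira vanishing in positive characteristic, and it is precisely to make it available that Assumption \ref{asm:main assumption} (the $W_2(k)$-liftability of $X$ together with $p \ge 3$) is imposed.
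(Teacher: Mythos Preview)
Your proof is correct and follows essentially the same route as the paper: both reduce, via the Leray spectral sequence/projective bundle formula, to showing $H^{i-1}(X,\bigoplus_{\ell=1}^{n-1}\sL^{-\ell})=0$ for $i<3$, and then invoke Kodaira vanishing (Deligne--Illusie in positive characteristic under Assumption~\ref{asm:main assumption}). The only cosmetic differences are that the paper treats characteristic~$0$ separately by applying Kawamata--Viehweg directly on $Y$, and computes $R^1\pi_*\sO_{Y/X}(-n)$ via the identification $\pi^*\sL\otimes\sO_{Y/X}(1)\cong\sO_{\mathbf{P}(\sL\oplus\sO)/X}(1)$ rather than relative Serre duality; your uniform treatment is marginally cleaner.
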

    \begin{proof}
        By Proposition \ref{prop:bigness of tauto}, this follows from  Kawamata-Viehweg Vanishing theorem in characteristic 0 for $i<\dim Y$. Now we prove it in positive characteristics. 
        
        
    
        Recall $\pi^{*}\sL\otimes\sO_{Y/X}(1)\cong\sO_{\bf P(\sL\otimes\sO)/X}(1)$ and $Y=\bf P(\sL^\vee\oplus\sO)\cong \bf P(\sL\oplus \sO)$ is a projective bundle on $X$, thus by the direct image formula of projective bundles, $R\pi_{*}(\pi^{*}\sL^{-n}\otimes\sO_{Y/X}(-n))\cong \oplus_{\ell=1}^{n-1}\sL^{-\ell}[-1]$ for $n>1$ and $0$ for $n=1$.
        Thus we have $H^{i}(Y,\pi^{*}\sL^{-n}\otimes\sO_{Y/X}(-n))\cong H^{i-1}(X,\oplus_{\ell=1}^{n-1}\sL^{-\ell})$. Since $\sL$ is very ample, $X$ can be lift to $W_2(k)$ and $\chr(k)\ge 3$, then by \cite[Corollary 2.8,(2.8.2)]{DI87}, we have the vanishing $H^{i}(Y,\pi^{*}\sL^{-n}\otimes\sO_{Y/X}(-n))=0$ for $i<3$.
    \end{proof}

   By the construction as in \cite[Corollarie 8.8.4, and Theorem 8.9.1(crit\`ere de Grauert)]{EGAII}, 
one has the induced open and closed decomposition:
\begin{footnotesize}
	\[\bti \bf V\sL \arrow[r]\arrow[d,hook,"\text{open}"]&\A_X^N\arrow[d,hook]\arrow[r]&\A^N\arrow[d]\\
	\bf P(\sL^{\vee}\oplus\sO_X)\cong Y\arrow[r]\arrow[rr,bend right,"{\phi_{|\pi^*\sL\otimes\sO_{Y/X}(1)|}}"']& \PP^N_X\arrow[r]&\PP_k^N\\
	X\arrow[u,"\sigma"]\arrow[rr,"{\phi_{|\sL|}}"']&&\PP^N_k\arrow[u,equal]
	\eti\]
\end{footnotesize}
    By the Grauert's criterion \cite[8.9.1]{EGAII}, $\sL$ is very ample, so $\bf V\sL$ to its image in $\A^N_k$ is the blowing down along $D_\infty$. Its closed complement is the closed immersion defined by $\phi_{|\sL|}$. Thus the projection $\phi_{|\pi^*\sL\otimes \sO_{Y/X}(1)|}$ is factored as the composition of a blowing down along $D_\infty$ and a closed immersion. 
    
    
    \begin{proposition}\label{prop:birat map}
	    The linear system $\left\vert\pi^{*}\sL^{r}\otimes \sO_{Y/X}(r)\right\vert=|\sW|$ induces a morphism
	    \[
	    g: Y\rightarrow \bf PH^{0}(Y,\pi^{*}\sL^{r}\otimes\sO_{Y/X}(r))=\bf P(W),
	    \]
	which is an immersion over $U$ and $g(D_{\infty})$ is a point. The image of $Y$ under $g$ is normal.
    \end{proposition}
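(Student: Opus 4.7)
My plan is to show that $g$ factors through the Grauert contraction of $D_{\infty}$ in $Y$, whereupon all three assertions of the proposition follow at once.

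First I note that $\sW=L_{1}^{\otimes r}$, where $L_{1}:=\pi^{*}\sL\otimes\sO_{Y/X}(1)$, is big and base-point free by Proposition \ref{prop:bigness of tauto}, so $|\sW|$ defines an everywhere regular morphism $g\colon Y\to\bf P(W)$. By the Grauert decomposition recorded in the paragraph preceding the proposition, $\phi_{|L_{1}|}$ factors as $Y\xrightarrow{\pi_{Y}}\bar{Y}\xrightarrow{\iota}\PP^{N}$, where $\pi_{Y}$ is the algebraic contraction of $D_{\infty}$ (so $\pi_{Y\,*}\sO_{Y}=\sO_{\bar{Y}}$, $\pi_{Y}$ is an isomorphism on $U:=Y\setminus D_{\infty}=\bf V\sL^{\vee}$, and $\pi_{Y}$ collapses $D_{\infty}$ to a single point $\bar{p}$) and $\iota$ is a closed immersion. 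In particular $L_{1}\cong\pi_{Y}^{*}A$ for the very ample line bundle $A:=\iota^{*}\sO_{\PP^{N}}(1)$ on $\bar{Y}$, and $\bar{Y}$ is normal.

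Using the projection formula together with $\pi_{Y\,*}\sO_{Y}=\sO_{\bar{Y}}$ I get
\[
H^{0}(Y,L_{1}^{\otimes r})=H^{0}(\bar{Y},A^{\otimes r}),
\]
so the complete linear system $|\sW|$ on $Y$ agrees with the pullback of $|A^{\otimes r}|$ on $\bar{Y}$. Hence $g=\phi_{|A^{\otimes r}|}\circ\pi_{Y}$, and since $A^{\otimes r}$ is very ample on $\bar{Y}$ (as a positive power of the very ample $A$), $\phi_{|A^{\otimes r}|}$ is a closed immersion. The three claims then follow at once: $g(D_{\infty})=\phi_{|A^{\otimes r}|}(\bar{p})$ is a single point; $g|_{U}=\phi_{|A^{\otimes r}|}\circ\pi_{Y}|_{U}$ is the composition of an isomorphism onto $\bar{Y}\setminus\{\bar{p}\}$ with a closed immersion, hence an immersion; and $g(Y)=\phi_{|A^{\otimes r}|}(\bar{Y})\cong\bar{Y}$ is normal.

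The main obstacle is extracting all the listed properties from the Grauert decomposition, in particular the Stein-type identity $\pi_{Y\,*}\sO_{Y}=\sO_{\bar{Y}}$ and the very ampleness of the induced $A$ on $\bar{Y}$. In the cited EGA II \S 8.8--8.9 the key input is ampleness of the conormal bundle $\mathcal{N}_{D_{\infty}/Y}^{\vee}\cong(\pi|_{D_{\infty}})^{*}\sL$ (coming from $\sO_{Y}(D_{\infty})\cong\sO_{Y/X}(1)$), which holds since $\sL$ is very ample on $X\cong D_{\infty}$; if needed, the Stein identity can be checked directly at $\bar{p}$ via the theorem on formal functions together with $H^{0}(D_{\infty},\sO_{D_{\infty}})=k$ and the vanishings $H^{1}(X,\sL^{n})=0$ for $n\ge 1$, ensuring surjectivity of the transition maps in $\varprojlim_{n}H^{0}(nD_{\infty},\sO_{nD_{\infty}})$.
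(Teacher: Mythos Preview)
Your argument is correct and follows essentially the same route as the paper: both factor $g$ through the Grauert contraction $Y\to\bar Y$ collapsing $D_\infty$, after which the remaining map to $\mathbf P(W)$ is a closed immersion (the paper phrases this second factor as the Veronese $v_r\circ\phi_{|L_1|}$, you as $\phi_{|A^{\otimes r}|}\circ\pi_Y$ via the projection formula and the Stein identity $\pi_{Y*}\sO_Y=\sO_{\bar Y}$). For normality the paper invokes Serre's criterion, using that the unique singularity is isolated in dimension $\ge 3$, whereas your identification $g(Y)\cong\bar Y$ gives it more directly from the normality of $\bar Y$.
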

    \begin{proof}
	    The previous arguments also hold for the linear system  $|\pi^*\sL^{\otimes r}\otimes\sO_{Y/X}(r)|$, denoting the projection by $g:Y\to \bf PH^{0}(Y,\pi^*\sL^{\otimes r}\otimes\sO_{Y/X}(r))$, one has $g$ factored as $g=v_r\circ\phi_{|\pi^*\sL\otimes\sO_{Y/X}(1)|}$, where $v_r$ is the $r$-fold Veronese embedding.
    
    	It is easy to see $g(Y)$ is integral. Since $\dim g(Y)\geq 3$ and has a unique isolated singularity, by Serre's criterion, $g(Y)$ is normal.
        
    \end{proof}
  
   	Denote by $Z$ the image of $g$.  We put $o=g(D_{\infty})$ which is the unique singularity of $Z$ and $Z^o=Z-\{o\}$ which is isomorphic to $\bf V\sL^{\vee}$. Since $Z$ is also isomorphic to the image of $\phi_{|\pi^*\sL\otimes\sO(1)|}$ via the $r$-fold Veronese embedding, there is a very ample line bundle $\sH$ on $Z$ such that $g^{*}\sH\cong \pi^{*}\sL\otimes \sO_{Y/X}(1)$, and $\sO_{\bf P(W)}(1)|_{Z}=\sH^{\otimes r}$. Recall that a coherent sheaf $\sF$ on $Z$ is $m$-regular with respect to $\sH$, if $H^{q}(Z,\sF\otimes\sH^{\otimes(m-q)}))=0$ for $q\geq 1$.

	\begin{theorem}\label{thm:global gen}
		Let $X$ be a smooth projective surface. Then under the Assumption \ref{asm:main assumption}, for $r>3$, we have $g_{*}\omega_{Y}\otimes\sO_{\bf P(W)}(1)=g_*(\omega_Y\otimes \sW)$ is Castelnuovo-Mumford $0$-regular with respect to the very ample line bundle $\sH$ on $Z$.
	\end{theorem}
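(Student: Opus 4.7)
The plan is to establish $H^q(Z, g_*(\omega_Y \otimes \sW) \otimes \sH^{-q}) = 0$ for every $q \geq 1$. Since $\sH^{-q}$ is locally free on $Z$ and $\sW \cong g^*\sH^{r}$ (because $g^*\sH \cong \pi^*\sL \otimes \sO_{Y/X}(1)$), the projection formula reduces the task to showing
\[
H^q\bigl(Z, g_*(\omega_Y \otimes g^*\sH^{\,r-q})\bigr) = 0, \qquad q \geq 1.
\]

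The heart of the argument is a Grauert--Riemenschneider-type vanishing $R^i g_*\omega_Y = 0$ for $i \geq 1$. Since $g$ is an isomorphism on $Y \setminus D_\infty$ and contracts $D_\infty$ to the single point $o$, these higher direct images are coherent sheaves supported at $\{o\}$. By the theorem on formal functions, it suffices to prove $H^i(D_\infty^{(n)}, \omega_Y|_{D_\infty^{(n)}}) = 0$ for every $n$ and every $i \geq 1$, where $D_\infty^{(n)}$ is the $n$-th infinitesimal thickening of $D_\infty$ in $Y$. Inducting on $n$ via the short exact sequences
\[
0 \to \sI_{D_\infty}^n/\sI_{D_\infty}^{n+1} \to \sO_{D_\infty^{(n+1)}} \to \sO_{D_\infty^{(n)}} \to 0
\]
reduces this to $H^i(D_\infty, \omega_Y|_{D_\infty} \otimes \sI_{D_\infty}^n/\sI_{D_\infty}^{n+1}) = 0$ for $i \geq 1$ and $n \geq 0$. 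Using the identifications $D_\infty \cong X$, $\sO_{Y/X}(1)|_{D_\infty} \cong \sL^\vee$, the fact that $D_\infty$ is a Cartier divisor with $\sO_Y(D_\infty) = \sO_{Y/X}(1)$, and Lemma \ref{lem:can bund formula}, a direct computation gives $\omega_Y|_{D_\infty} \otimes \sI_{D_\infty}^n/\sI_{D_\infty}^{n+1} \cong \omega_X \otimes \sL^{n+1}$. The vanishing then follows from Kodaira vanishing on the smooth surface $X$, which is available under Assumption \ref{asm:main assumption} via Deligne--Illusie \cite{DI87} (using that $X$ lifts to $W_2(k)$ and $p \geq 3$).

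Granted $R^i g_*\omega_Y = 0$ for $i \geq 1$, the projection formula gives $R^i g_*(\omega_Y \otimes g^*\sH^{r-q}) = 0$ for $i \geq 1$, so the Leray spectral sequence collapses to
\[
H^q\bigl(Z, g_*(\omega_Y \otimes g^*\sH^{\,r-q})\bigr) \cong H^q\bigl(Y, \omega_Y \otimes g^*\sH^{\,r-q}\bigr).
\]
For $q \geq 4$ this vanishes because $\dim Y = 3$. For $1 \leq q \leq 3$, Serre duality on the smooth threefold $Y$ identifies the right-hand side with the dual of $H^{3-q}(Y, \pi^*\sL^{\,q-r} \otimes \sO_{Y/X}(q-r))$; since $r > 3$, the exponent $n := r - q$ lies in $\{1,2,\ldots\}$ and the cohomological degree $3-q$ lies in $\{0,1,2\}$, so Lemma \ref{lem: a vanishing theorem} delivers precisely the vanishing required. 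I expect the only real obstacle to be the Grauert--Riemenschneider step itself: no such general theorem is available in positive characteristic, but the very explicit geometry of the contraction---$D_\infty \cong X$ with ample conormal bundle $\sI_{D_\infty}/\sI_{D_\infty}^2 \cong \sL$---funnels the problem into Kodaira vanishing on the base surface $X$, exactly where Assumption \ref{asm:main assumption} is designed to apply.
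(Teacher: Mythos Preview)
Your proof is correct and follows the same overall architecture as the paper's: first establish the Grauert--Riemenschneider-type vanishing $R^ig_*\omega_Y = 0$ for $i \geq 1$, then use Leray and Serre duality to reduce the $0$-regularity statement to Lemma~\ref{lem: a vanishing theorem}. The only difference is in how the first step is carried out. The paper invokes the standard criterion (Koll\'ar--Mori \cite[Prop.~2.69]{KM98}) that $R^qg_*\omega_Y = 0$ is equivalent to $H^q(Y, \omega_Y \otimes g^*\sH^{\otimes\ell}) = 0$ for $\ell \gg 0$, and then notes that by Serre duality this is again an instance of Lemma~\ref{lem: a vanishing theorem}. Your route via the theorem on formal functions and an explicit computation on thickenings of $D_\infty$ is more hands-on but equally valid; it has the minor virtue of making transparent exactly where Kodaira vanishing on the base surface $X$ enters, while the paper's packaging is a bit more economical since it simply recycles Lemma~\ref{lem: a vanishing theorem} a second time rather than unwinding its proof.
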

    
    \begin{proof}
    	We first prove that $Rg_{*}\omega_{Y}\cong g_{*}\omega_{Y}$.
    
 Recall that for $f : B\to C$ be a proper morphism between varieties and $\sF$ a coherent sheaf on $B$.
 The following are equivalent, for a proof see \cite{KM98} Proposition 2.69:
\begin{itemize}
    \item $H^q(B, \sF\otimes f^*\sO_C(H)) =0$ for $H$ sufficiently ample,
    \item $R^qf_*\sF=0$.
\end{itemize}
We take $B=Y, C=Z, \sF=\omega_{Y}$, and $H=\sH^{\otimes \ell} $ for $\ell\gg 0$. Then by Lemma \ref{lem: a vanishing theorem}, we have $R^{q}g_{*}\omega_{Y}=0$ for $q=1,2,3$. Since $\dim Y=3$, we have $Rg_*\omega_Y=g_*\omega_Y$.


Since $r>3$, and $g^{*}\sH\cong \pi^{*}\sL\otimes\sO_{Y/X}(1)$, then again by Lemma \ref{lem: a vanishing theorem}, for $q=1,2,3$ we have 
    	\begin{small}
    	    \[ H^q(Z,g_*(\omega_Y\otimes \sW)\otimes \sH^{\otimes(-q)})
    		=H^{3-q}(Y,\pi^*\sL^{\otimes q-r}\otimes\sO_{Y/X}(q-r))=0.
    		\]
    	\end{small}
    And the zero-regularity of $g_*(\omega_Y\otimes_Y\sW)$ follows.
\end{proof}
\begin{remark}
If $\chr(k)=0$, then by Koll\'ar \cite[Theorem 2.1]{Kol86I}, $Rg_{*}\omega_{Y}=\omega_{Y}$ for a generic finite map between proper varieties with $X$ smooth .
\end{remark}
In particular, we have:
\begin{corollary}\label{cor:surjection we need}
 For any $\ell\ge 0$, we have the following surjection:
   \[H^{0}(Z,g_{*}(\omega_{Y}\otimes\sW))\otimes H^{0}(Z,\sO_Z(\ell))\twoheadrightarrow H^{0}(Z,g_{*}(\omega_{Y}\otimes \sW)\otimes \sO_Z(\ell)).\]
\end{corollary}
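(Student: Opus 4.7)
The plan is to deduce this directly from Theorem \ref{thm:global gen} together with the standard Castelnuovo--Mumford multiplication principle. Recall that the $0$-regularity of $g_*(\omega_Y\otimes \sW)$ with respect to the very ample line bundle $\sH$ on $Z$, combined with Mumford's theorem on regularity, yields that for every $n\geq 0$ the natural multiplication map
\[
H^{0}(Z,g_*(\omega_Y\otimes \sW))\otimes H^{0}(Z,\sH^{\otimes n})\longrightarrow H^{0}(Z,g_*(\omega_Y\otimes \sW)\otimes \sH^{\otimes n})
\]
is surjective. (This is the standard consequence of $0$-regularity: induct on $n$, using at each step that a $0$-regular sheaf is also $n$-regular and that $H^{0}(\sF\otimes \sH^{\otimes n})\otimes H^{0}(\sH)\twoheadrightarrow H^{0}(\sF\otimes \sH^{\otimes(n+1)})$ holds whenever $\sF$ is $n$-regular.)

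Next I would translate this statement into the grading coming from $\sO_{\bf P(W)}(1)$. By Proposition \ref{prop:birat map} and its proof, $Z$ is the image of $Y$ under $g = v_r\circ \phi_{|\pi^{*}\sL\otimes\sO_{Y/X}(1)|}$, and $\sO_{\bf P(W)}(1)|_Z\cong \sH^{\otimes r}$; hence $\sO_Z(\ell)\cong \sH^{\otimes r\ell}$. Taking $n=r\ell$ in the previous surjection gives exactly
\[
H^{0}(Z,g_*(\omega_Y\otimes \sW))\otimes H^{0}(Z,\sO_Z(\ell))\twoheadrightarrow H^{0}(Z,g_*(\omega_Y\otimes \sW)\otimes \sO_Z(\ell)),
\]
as desired, the case $\ell=0$ being trivial.

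The only thing to double-check is that the hypotheses of the Castelnuovo--Mumford argument are available in our setting: we need $\sH$ to be very ample on $Z$ (guaranteed by the factorization through the Veronese embedding in Proposition \ref{prop:birat map}) and the target to be a projective scheme (it is, as a closed subvariety of $\bf P(W)$). Both hold, so there is no real obstacle; the content of the corollary is entirely packaged in Theorem \ref{thm:global gen}, and the statement above is essentially a routine application of Mumford's regularity machine. The mildly delicate point, if any, is simply keeping track of the conversion between powers of $\sH$ and powers of $\sO_Z(1)=\sH^{\otimes r}$, but this causes no difficulty since surjectivity for $n=r\ell$ is a special case of surjectivity for all $n\geq 0$.
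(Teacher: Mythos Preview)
Your proposal is correct and follows essentially the same argument as the paper: both invoke the $0$-regularity of $g_*(\omega_Y\otimes\sW)$ with respect to $\sH$ from Theorem~\ref{thm:global gen}, apply Mumford's theorem to obtain surjectivity of the multiplication map for all powers $\sH^{\otimes n}$, and then specialize to $n=r\ell$ using $\sO_Z(1)\cong\sH^{\otimes r}$. The paper's proof is terser but identical in content.
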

\begin{proof}
By the $0$-regularity of $g_{*}(\omega_{Y}\otimes\sW)$ with respect to $\sH$, and the Mumford's theorem, see \cite[Chapter 5, Lemma 5.1]{FGA} or \cite[Theorem 1.8.5]{Laz04}, we have:
\[
H^{0}(Z,g_{*}(\omega_{Y}\otimes \sW))\otimes H^{0}(Z,\sH^{\ell})\rightarrow H^{0}(Z,g_{*}(\omega_{Y}\otimes \sW)\otimes\sH^{\ell})
\]
is surjective for any $\ell\ge 0$. Since $\sH^{r}=\sO_Z(1)$, we are done.
 \end{proof}

    \section{Higher Dimension Case}
   
%
 In this section, we consider the spectral variety $X_s$ for $s\in \bm A$. First, we check the smoothness of a generic spectral variety. This can be done by considering spectral varieties defined by equations:
    \[
    \lambda^{r}+a_{r}=0,
    \]
    where $a_{r}\in H^{0}(X,\sL^{\otimes r})\subset \bm A$. By the vary ampleness of $\sL$, for generic $a_{r}$, the zero divisor of $a_{r}$ is smooth. Then the corresponding spectral variety, as a $r$-cyclic cover of $X$ ramified over $\text{zero}(a_{r})$, is smooth. Thus we can see that generic spectral varieties are smooth.
    
 Let $X_{s}$ be a generic smooth spectral variety with $s\in \bm A$, our goal in this section is to show that the natural map $\pi^{*}:\Pic(X)\rightarrow \Pic(X_{s})$ is an isomorphism when $\dim X\ge 3$. Considering the following exact sequence:
 \[
 0\rightarrow \mathbb{Z}[D_{\infty}]\rightarrow\Pic(Y)\rightarrow\Pic(U)\cong\pi^{*}\Pic(X)\rightarrow 0,
 \]
 we only need to prove the following exact sequence for generic $s$:
 \begin{equation}\label{eq:key exact seq}
 	0\rightarrow \mathbb{Z}[D_{\infty}]\rightarrow\Pic(Y)\rightarrow\Pic(X_{s})\rightarrow 0.
 \end{equation}
 Since we also consider positive characteristics and also for the completeness of the paper, we simplified and adapted the proofs in \cite{RS06} to our cases. 
 
  Let us denote the formal completion of $Y$ along $X_{s}$ by $\hat{Y}_{s}$  and the $\ell$-th thickening of $X_s$ by $X_{s,\ell}$. Then $\hat Y_s=\displaystyle\lim_{\longrightarrow}X_{s,\ell}$ in the category of locally ringed spaces. We denote the defining ideal $X_{s}$ by $\sI_s\cong\sW^{-1}$. One has the exact sequence:
    \[0\to \sI_{s}^{\ell}\to \sO_Y\to \sO_{X_{s,\ell}}\to 0.\] 
     
    \bproposition \label{prop:inj of formal pic}If $\dim X\geq 3$, $\Pic(\hat Y_s)\cong \Pic(X_s)$. If $\dim X=2$, the natural map $\Pic(\hat{Y}_{s})\rightarrow \Pic(X_{s})$ is an injection.
    \eproposition
    \begin{proof}
        
        One has the following exact sequence:
        \[
        0\rightarrow\sI^{m}_s/\sI_s^{m+1}\rightarrow\sO^{\times}_{X_{s,m+1}}\rightarrow \sO^{\times}_{X_{s,m}}\rightarrow 0
        \]
        
        Since $\pi^*\sL^{-m}\otimes \sO_{Y/X}(-m)|_{X_s}\cong \pi_{s}^*\sL^{-m}$, we have $$H^i(X_s,\sfrac{\sI^m}{\sI^{m+1}})\cong H^i(X_s,\pi^*\sL^{-m}\otimes \sO_{Y/X}(-m)|_{X_s})$$ If $\dim X\geq 3$, by the $W_2(k)$-Kodaira Vanishing theorem in positive characteristics and the Kawamata-Viehweg Vanishing theorem for big and base point free line bundle in charateristic 0, $H^i(X_s,\sfrac{\sI^m}{\sI^{m+1}})=0$ for $i=1,2$ and $m\geq 1$. Then by \cite[Expos\'e XI.1]{SGA2}, We get the isomorphism. If $\dim X=2$, similarly, we get $H^i(X_s,\sfrac{\sI^m}{\sI^{m+1}})=0$ for $i=1$, thus $\Pic(\hat{Y}_{s})\rightarrow \Pic(X_{s})$ is an injection.
    \end{proof} 

Now let us recall the modified Grothendieck's Lefschetz conditions introduced in \cite[Definition 1]{RS06}, which is weaker than that in\cite[Expos\'e X.2]{SGA2} and fits into our cases well. It is this weaker Lefschetz condition that helps explain why we can not have the results as in \cite[Theorem 3.1.8]{SGA2}.
    \begin{definition}[\cite{RS06},\cite{SGA2} Expos\'e X.2 ]\label{def:lef cond}
    	Let $T$ be a smooth projective variety, and $D$ an effective divisor in $T$. We put $\hat{T}$ the formal completion of $T$ along $D$. We say the pair $(T,D)$ satisfies the weak Lefschetz condition, denoted by $\text{Lef}^w(T,D)$ if for any any open neighborhood $V$ of $D$, and any locally free coherent sheaf $\sF$ on $V$, there is an open subset $V'\subset V$, such that the natural map: $H^{0}(V',\sF|_{V'})\rightarrow\ H^{0}(\hat{T},\hat\sF)$ is an isomorphism. Here $\hat{\sF}$ is the completion of $\sF$ along $D$.
    	
    	We say the pair $(T,D)$ satisfies the weak effective Lefschetz condition, which we denote by $\text{Leff}^w(T,D)$, if it satisfies $\text{Lef}^w(T,D)$ and in addition, for all locally free coherent sheaf $\ssF$ on $\hat{T}$, there exist an open neighbourhood $V$ of $D$ and a locally free coherent sheaf $\sF$ on $V$ such that $\hat{\sF}\cong \ssF$.
    \end{definition}

  Notice that for any open neighborhood of $X_{s}$ and any locally free coherent sheaf $\sF$ on $V$, it can be extended to a reflexive sheaf on $Y$. 
    The following proposition shows that the pair $(Y,X_{s})$ satisfies the weak Lefschetz condition in Definition \ref{def:lef cond}:
    \begin{proposition}\label{prop:mod lef cond}
    	For any reflexive sheaf $\sN$ on $Y$ which is locally free in an open neighborhood of $X_s$, there exists an integer $m$ such that the natural map:
    	\[
    	H^{0}(Y,\sN(mD_{\infty}))\rightarrow H^{0}(\hat{Y}_{s},\hat{\sN})
    	\]
    	is an isomorphism.
    \end{proposition}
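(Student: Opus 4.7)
The approach is to reduce the modified Lefschetz condition for $(Y, X_s)$ to the classical Grothendieck-Lefschetz condition on $(Z, H_s)$, where $H_s = g(X_s)$ is an ample Cartier divisor on the projective normal variety $Z$ (via the contraction $g: Y \to Z$ of Proposition \ref{prop:birat map}), following the strategy of Ravindra-Srinivas. Since $X_s \cap D_\infty = \emptyset$, the morphism $g$ restricts to an isomorphism between an open neighborhood of $X_s$ and one of $H_s$; in particular $\sO_Y(mD_\infty)$ is canonically trivial on every infinitesimal thickening $X_{s,\ell}$, which both makes the map in the proposition well-defined and gives the identifications $\hat Y_s \cong \hat Z_{H_s}$ and $\hat{\sN} \cong \widehat{g_*\sN}$.

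For injectivity, I use that $\sN(mD_\infty)$ is torsion-free on the integral variety $Y$. If a section $\tau \in H^0(Y, \sN(mD_\infty))$ maps to zero in $H^0(\hat Y_s, \hat{\sN})$, then at each point $y \in X_s$ (where $\sN(mD_\infty)$ is locally free) the stalk $\tau_y$ lies in $\bigcap_\ell \sI_{s,y}^\ell \sN(mD_\infty)_y$, which vanishes by the Krull intersection theorem. Hence $\tau$ vanishes in a Zariski neighborhood of $X_s$, and therefore globally by torsion-freeness and the integrality of $Y$.

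For surjectivity, I invoke the classical Lefschetz condition for the ample Cartier divisor $H_s$ on the projective normal variety $Z$ (with $\dim Z = \dim Y \geq 3$) applied to the coherent sheaf $g_*\sN$. The required cohomological vanishings $H^i(Z, g_*\sN \otimes \sO_Z(-nH_s)) = 0$ for $i \leq 1$ and $n \gg 0$ reduce, via the projection formula $g^*\sO_Z(H_s) = \sW$, to vanishings $H^i(Y, \sN \otimes \sW^{-n}) = 0$, which follow from Serre duality combined with Lemma \ref{lem: a vanishing theorem} under Assumption \ref{asm:main assumption}. This yields an isomorphism $H^0(Z - \{o\}, g_*\sN) \cong H^0(\hat Z_{H_s}, \widehat{g_*\sN})$. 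Pulling back through the isomorphism $g : Y - D_\infty \cong Z - \{o\}$ and using that $H^0(Y - D_\infty, \sN|_{Y - D_\infty}) = \varinjlim_m H^0(Y, \sN(mD_\infty))$ (since $D_\infty$ is an effective Cartier divisor and $\sN$ is coherent), every section of $\hat{\sN}$ on $\hat Y_s$ lifts to a section of $\sN(mD_\infty)$ for some $m$ depending on that section.

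To extract a single $m$ valid for all of $H^0(\hat Y_s, \hat{\sN})$, I show this space is finite-dimensional. The short exact sequences $0 \to \sN|_{X_s} \otimes \sW^{-\ell}|_{X_s} \to \sN|_{X_{s,\ell+1}} \to \sN|_{X_{s,\ell}} \to 0$, combined with Serre vanishing for the ample line bundle $\sW|_{X_s}$ on the smooth projective variety $X_s$ (using $\dim X_s \geq 3$), imply that the transition maps $H^0(X_{s,\ell+1}, \sN|_{X_{s,\ell+1}}) \to H^0(X_{s,\ell}, \sN|_{X_{s,\ell}})$ become isomorphisms for $\ell \gg 0$; hence the inverse limit stabilizes to a finite-dimensional $k$-vector space, and a uniform $m$ exists. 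The main obstacle is verifying the cohomological hypotheses of the classical Lefschetz condition for $g_*\sN$, which is coherent but may fail to be locally free at the singular point $o \in Z$; handling the behavior at $o$ requires translating the conditions on $Z$ back to vanishings on $Y$, where the $W_2$-lifting assumption and Lemma \ref{lem: a vanishing theorem} supply what is needed in positive characteristic.
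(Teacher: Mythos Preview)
Your strategy of passing to $Z$ and invoking a classical Lefschetz-type statement is reasonable in spirit, but the surjectivity argument has a genuine gap. You claim that the vanishings $H^i(Y,\sN\otimes\sW^{-n})=0$ for $i\le 1$ and $n\gg 0$ follow from ``Serre duality combined with Lemma~\ref{lem: a vanishing theorem}''. But Lemma~\ref{lem: a vanishing theorem} only treats the case $\sN=\sO_Y$; it says nothing about an arbitrary reflexive sheaf $\sN$. In fact the vanishing you need is \emph{false} in general: take $\sN=\sO_Y(-D_\infty)$. Then $g_*\sN=\sI_o$, and the exact sequence $0\to\sI_o\to\sO_Z\to k_o\to 0$ twisted by $\sO_Z(-nH_s)$ gives, since $H^0(Z,\sO_Z(-n))=0$ and $H^1(Z,\sO_Z(-n))=0$ (the latter by Lemma~\ref{lem: a vanishing theorem} and the Leray sequence for $g$), an isomorphism $H^1(Z,\sI_o(-n))\cong k\neq 0$ for all $n\ge 1$. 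Correspondingly, on $Y$ one computes via $R\pi_*$ that $H^1(Y,\sO_Y(-D_\infty)\otimes\sW^{-n})\neq 0$. So neither the $Z$-vanishing nor the $Y$-vanishing you invoke holds for this $\sN$, and your route to surjectivity breaks.

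The paper's proof avoids this by building the $mD_\infty$-twist into the vanishing statement from the outset, rather than treating it as a bookkeeping device at the end. One needs $H^i(Y,\sN(mD_\infty-nX_s))=0$ for $i=0,1$; by Serre duality this becomes vanishing of $H^{\dim Y-i}(Y,\omega_Y\otimes\sN^\vee(nX_s-mD_\infty))$. The point is that $nX_s-mD_\infty=m(X_s-D_\infty)+(n-m)X_s$ with $X_s-D_\infty$ \emph{ample} and $X_s$ nef, so Fujita vanishing gives the result for $m$ sufficiently large and all $n>m$, uniformly in $n$. By contrast, $nX_s$ alone cannot be written as (large ample)${}+{}$(nef) because $\sW$ is trivial on $D_\infty$; this is exactly why your untwisted vanishing fails. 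Your approach could be repaired by instead invoking $\text{Lef}(Z,H_s)$ directly via the depth criterion for normal varieties (SGA2~X, Exemple~2.2: $Z$ normal of dimension $\ge 3$ and $Z-H_s$ affine imply $\text{depth}\,\sO_{Z,z}\ge 2$ at every closed point), which bypasses the problematic $H^1$-vanishing; but as written, the justification is incorrect.
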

    \begin{proof}
    	Since $X_{s}\cap D_{\infty}=\emptyset$, $H^{0}(\hat{Y}_{s},\hat{\sN})\cong H^{0}(\hat{Y}_{s},\hat{\sN(\ell D_{\infty})})$ for any $\ell\in\mathbb{Z}$.
    	
    	
 Recall that, as a divisor, $X_s\simeq r(D_\infty+\pi^*L)$ and $X_s-iD_\infty$ is ample for $0<i<r$. In fact, $D_\infty+(1+\epsilon)\pi^*L$ is ample for any $\epsilon>0$, i.e., $r(D_\infty+\pi^*L)+m\pi^*L$ is ample for all $m>0$. This is because $\sO_Y(r(D_\infty+\pi^*L)+m\pi^*L)\cong \sO_{\bf P(\sL^m\oplus\sL^{r+m})/X}(1)$
and $\sL^m\oplus\sL^{r+m}$ is an ample vector bundle. 

The map in this proposition is induced by first considering the exact sequence for each thickening $X_{s,n}$ (the sequence is exact because we assume $\sN$ is locally free along $X_s$)
    	\[\xymatrix{ 0\ar[r] &\sN(mD_\infty-nX_s)\ar[r]& \sN(mD_\infty)\ar[r]^{t_n\ \ \ \ \ \ \ \ \ }&\sN(mD_\infty)|_{X_{s,n}}\cong \sN|_{X_{s,n}}\ar[r]& 0
    		},
    		\]
    then taking the inverse limit $\displaystyle\lim_{\leftarrow}H^0(t_n)$ (cf. \cite[Chapter 8, Corollary 8.2.4]{FGA}). To prove the proposition, we have to show that for $n$ sufficiently large $H^0(t_n)$ is both injective and surjective (cf. \cite[8.2.5.2]{FGA} or \cite[Ch. 0, 13]{EGAIII-1}). This can be deduced from the vanishing of the cohomologies
    \[\begin{split} H^0(Y,\sN(mD_\infty-nX_s))&=H^{\dim Y}(Y,\omega_Y\otimes \sN^\vee(nX_s-mD_\infty))^\vee=0,\\
    H^1(Y,\sN(mD_\infty-nX_s))&=H^{\dim Y-1}(Y,\omega_Y\otimes \sN^\vee(nX_s-mD_\infty))^\vee=0.
    \end{split}\]
    This is because $nX_s-mD_\infty=m(X_s-D_\infty)+(n-m)X_s$, $(X_s-D_\infty)$ is ample and $X_s$ is nef. Then by the Fujita vanishing theorem \cite[Theorem (1)]{Fuj83} (see also \cite[Remark 1.4.36]{Laz04}) for $m$ sufficiently large, and all $n>m$, we have the desired vanishing of cohomologies, which complete the proof.
    \end{proof}
    \begin{proposition}\label{prop:ker of pic res}
    We have the following exact sequence:
    \[
    0\rightarrow \mathbb{Z}D_{\infty}\rightarrow\Pic(Y)\rightarrow \Pic(\hat{Y}_{s}).
    \]
    Since $\Pic(\hat{Y}_{s})\rightarrow\Pic(X_{s})$ is injective, we also have:
    \[
    0\rightarrow \mathbb{Z}D_{\infty}\rightarrow\Pic(Y)\rightarrow \Pic(X_{s}).
    \]
    \end{proposition}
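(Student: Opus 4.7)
The plan is to establish the first exact sequence; the second will follow immediately from the injectivity of $\Pic(\hat Y_s)\to\Pic(X_s)$ shown in Proposition~\ref{prop:inj of formal pic}. Injectivity of $\mathbb{Z}[D_\infty]\hookrightarrow\Pic(Y)$ is already contained in the short exact sequence $0\to\mathbb{Z}[D_\infty]\to\Pic(Y)\to\Pic(Y\setminus D_\infty)\cong\pi^*\Pic(X)\to 0$ recalled at the beginning of the section, and the inclusion $\mathbb{Z}[D_\infty]\subset\ker(\Pic(Y)\to\Pic(\hat Y_s))$ is immediate from $D_\infty\cap X_s=\emptyset$, which forces $D_\infty$ to miss every thickening $X_{s,\ell}$. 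The real content of the proposition is therefore the reverse inclusion: any $\sN\in\Pic(Y)$ whose restriction to $\hat Y_s$ is trivial must already have $\sN|_{Y\setminus D_\infty}$ trivial, after which the excision sequence places $\sN$ in $\mathbb{Z}[D_\infty]$.

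To carry this out, I would fix such an $\sN$, choose a trivialization $\sigma\in H^0(\hat Y_s,\hat\sN)$ together with its inverse $\sigma^{-1}\in H^0(\hat Y_s,\hat\sN^\vee)$, and apply Proposition~\ref{prop:mod lef cond} separately to the line bundles $\sN$ and $\sN^\vee$ (both reflexive and locally free on $Y$). This produces integers $m,m'\geq 0$ and global lifts
\[\tilde\sigma\in H^0(Y,\sN(mD_\infty)),\qquad \tilde\tau\in H^0(Y,\sN^\vee(m'D_\infty))\]
of $\sigma$ and $\sigma^{-1}$. The product $\tilde\sigma\otimes\tilde\tau$ is then a global section of $\sO_Y((m+m')D_\infty)$, and since this line bundle is canonically trivialized on $Y\setminus D_\infty$ by the tautological section cutting out $D_\infty$, restricting the product yields a regular function $h\in H^0(Y\setminus D_\infty,\sO_Y)$ whose formal expansion along $X_s$ coincides with $\sigma\cdot\sigma^{-1}=1$.

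The crux is then to upgrade the formal identity $h|_{\hat Y_s}=1$ to an honest identity on $Y\setminus D_\infty$. The argument I would use is purely local: at every closed point $y\in X_s$, the difference $h-1$ lies in $\bigcap_\ell\sI_{s,y}^{\ell}$, which vanishes by Krull's intersection theorem in the Noetherian local ring $\sO_{Y,y}$, so $h-1$ is zero on some Zariski neighborhood of $X_s$ inside $Y\setminus D_\infty$; irreducibility of $Y\setminus D_\infty$ then forces $h\equiv 1$ globally. With $h\equiv 1$ in hand, $\tilde\sigma|_{Y\setminus D_\infty}$ is nowhere vanishing and hence trivializes $\sN|_{Y\setminus D_\infty}$, so the excision sequence concludes the proof.

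I expect the main obstacle to be precisely this Krull-type descent from the formal neighborhood to the Zariski open. Proposition~\ref{prop:mod lef cond} only guarantees lifts up to twisting by multiples of $D_\infty$, and without the cancellation provided by pairing $\tilde\sigma$ against $\tilde\tau$ there is no mechanism to convert the formal trivialization of $\hat\sN$ into a genuine trivialization away from $D_\infty$; once one sees the product trick, the remainder of the argument reduces cleanly to Krull's theorem together with the irreducibility of $Y\setminus D_\infty$.
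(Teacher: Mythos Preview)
Your proof is correct and follows the same overall strategy as the paper: both reduce the statement to showing that a line bundle on $Y$ which is trivial on $\hat Y_s$ is already trivial on $U=Y\setminus D_\infty$, and both invoke Proposition~\ref{prop:mod lef cond} to algebraize a formal trivializing section. The paper lifts a single section $\sigma$ to $\tilde\sigma\in H^0(Y,\sM(mD_\infty))$, observes it is invertible near $X_s$, and then asserts that $\sM(mD_\infty)\cong\sO_Y(\ell D_\infty)$; implicitly this uses that any effective divisor on $Y$ disjoint from $X_s$ must be supported on $D_\infty$ (which one can see via the ampleness of $X_s$ on $Z=g(Y)$). Your product trick---lifting both $\sigma$ and $\sigma^{-1}$, multiplying, and showing the resulting regular function on $Y\setminus D_\infty$ equals $1$ via Krull's intersection theorem and irreducibility---is a clean, self-contained alternative that makes this passage from ``invertible near $X_s$'' to ``invertible on all of $Y\setminus D_\infty$'' explicit without appealing to the geometry of $Z$. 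The two arguments buy the same conclusion; yours is slightly longer but more elementary, while the paper's is terser but leans on an unmentioned ampleness step.
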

    \begin{proof}
        It is obvious that $D_{\infty}$ is trivial when restricts to $\hat{Y}_{s}$.
        Let $Z^o=Y-D_\infty$, the exact sequence is deduced if we can show the injectivity of $\Pic(Z^o)\to \Pic(X_s)$. In other words,  for line bundle $\sM$ on $Z^o$ such that  $\sM|_{X_{s}}$ is trivial, we have to show $\sM$ is trivial. By the Proposition \ref{prop:inj of formal pic}, $\Pic(\hat{Y}_{s})\hookrightarrow\Pic(X_{s})$, we know that $\hat{\sM}$ is trivial on $\hat Y_s$.
        
        Then there is an invertible section of $\hat{\sM}$. By the Proposition \ref{prop:mod lef cond}, there exists an open neighborhood $V$ such that the isomorphism
        \[
        H^{0}(Y,\sM(mD_{\infty}))\rightarrow  H^{0}(\hat{Y}_s,\hat{\sM})
        \]
        factors through $H^{0}(V, \sM(mD_{\infty})|_V)\rightarrow H^{0}(\hat{Y}_s,\hat{\sM})$ which is also an isomorphism (it is injective because of the torsion freeness). Thus $\sM(mD_{\infty})$ has an invertible section in $U$ which means $\sM(mD_{\infty})\cong \sO_{Y}(\ell D_{\infty})$ for some $\ell$. We finish the proof.
        \end{proof}
    \begin{proposition}
        For the pair $(Y,X_{s})$, $\tx{Leff}^w(Y,X_{s})$ holds.
    \end{proposition}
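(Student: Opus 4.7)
The Lef$^w(Y,X_s)$ half of the assertion is precisely Proposition \ref{prop:mod lef cond}: any locally free coherent sheaf in a neighbourhood of $X_s$ extends to a reflexive sheaf on $Y$ (by pushing forward via the open inclusion), and Proposition \ref{prop:mod lef cond} shows that, after twisting suitably by $D_\infty$, global sections on such an extension agree with formal sections on $\hat Y_s$. So the real content is the \emph{effectivity}: every locally free coherent sheaf $\ssF$ on $\hat Y_s$ algebraizes to a locally free coherent sheaf on some open neighbourhood $V$ of $X_s$ in $Y$.

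\smallskip
\noindent The plan is to transport the problem, via the birational morphism $g:Y\to Z$ of Proposition \ref{prop:birat map}, to a \emph{classical ample} situation on $Z$ where Grothendieck's effective Lefschetz theorem applies directly. First I would observe that since the only locus collapsed by $g$ is $D_\infty\mapsto o$ and since spectral varieties satisfy $X_s\cap D_\infty=\emptyset$, the morphism $g$ restricts to an isomorphism from some open neighbourhood $U_0$ of $X_s$ in $Y$ onto an open neighbourhood $U_0'$ of $Z_s:=g(X_s)$ in $Z$. In particular, the formal completions $\hat Y_s$ and $\hat Z_{Z_s}$ are canonically isomorphic as formal schemes, and the category of coherent (resp.\ locally free coherent) sheaves is the same on both sides.

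\smallskip
\noindent Next, $Z_s$ is cut out on $Z$ by a hyperplane in $\bf P(W)$ that avoids the unique singular point $o$, so $Z_s$ is an \emph{ample} Cartier divisor on the normal projective variety $Z$, and $Z$ is smooth along $Z_s$. Since we are in the higher-dimensional case $\dim X\geq 3$, we have $\dim Z=\dim Y=\dim X+1\geq 4$, so along the ample divisor $Z_s$ the structure sheaf satisfies $\operatorname{depth}_{Z_s}\sO_Z\geq \dim X+1\geq 4\geq 3$. Grothendieck's classical effective Lefschetz theorem (SGA 2, Exp.~X, Thm.~2.1) therefore applies to the pair $(Z,Z_s)$, yielding $\mathrm{Leff}(Z,Z_s)$: every locally free coherent sheaf on $\hat Z_{Z_s}$ comes from a locally free coherent sheaf on some open neighbourhood $V'\supset Z_s$ in $Z$. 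Given a locally free coherent $\ssF$ on $\hat Y_s$, transfer it along the formal isomorphism to $\hat Z_{Z_s}$, algebraize to $\sF'$ on $V'$, and pull back by the isomorphism $g:g^{-1}(V')\cap U_0\xrightarrow{\sim}V'\cap U_0'$ to obtain the desired $\sF$ on $V:=g^{-1}(V')\cap U_0$, a neighbourhood of $X_s$ in $Y$, with $\hat\sF\cong\ssF$.

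\smallskip
\noindent The main subtlety is simply the bookkeeping that $g$ is truly an isomorphism over the relevant neighbourhood of $X_s$, and that the depth and ampleness hypotheses needed to invoke Grothendieck's Leff theorem are satisfied on $Z$ despite the isolated singularity at $o$; both points are immediate from $X_s\cap D_\infty=\emptyset$ together with Proposition \ref{prop:birat map}. No characteristic-dependent ingredient enters at this step, since the classical Grothendieck theorem is characteristic-free. This completes the verification of $\mathrm{Leff}^w(Y,X_s)$.
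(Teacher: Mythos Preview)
Your argument is correct but proceeds differently from the paper. The paper works directly on $Y$: it uses that $\sW$ restricts to a very ample line bundle in a neighbourhood of $X_s$ to produce a two-term presentation $\sO_{\hat Y_s}(-m_1)^{\oplus M_1}\xrightarrow{\hat\phi}\sO_{\hat Y_s}(-m_2)^{\oplus M_2}\twoheadrightarrow\ssF$ of any locally free formal sheaf (via \cite[Chapter IV, Theorem 1.5]{Har70}), then algebraizes the map $\hat\phi$ using the already-established $\mathrm{Lef}^w(Y,X_s)$, and finally checks that the cokernel of the algebraized map is locally free near $X_s$ by faithful flatness of completion. In other words, the paper essentially rewrites the classical proof of Leff in this setting. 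You instead observe that the birational contraction $g:Y\to Z$ is an isomorphism over a neighbourhood of $X_s$, so that $\hat Y_s\cong\hat Z_{Z_s}$, and then invoke the classical $\mathrm{Leff}(Z,Z_s)$ for the genuinely ample divisor $Z_s\subset Z$; this black-boxes the step the paper spells out. Both are valid and characteristic-free; your route is more conceptual, the paper's is more self-contained and makes explicit how $\mathrm{Lef}^w$ feeds into the effectivity.

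Two small points. First, your restriction to $\dim X\geq 3$ is unnecessary: already for $\dim X=2$ one has $\dim Z=3$, and since $Z$ is regular along $Z_s$ the depth at closed points of $Z_s$ equals $\dim Z=3$, which is exactly the threshold needed for Leff (and $Z$ normal gives S2, hence depth $\geq 2$ at the singular point $o\in Z\setminus Z_s$, so Lef also holds). Second, the reference ``SGA~2, Exp.~X, Thm.~2.1'' is the criterion for Lef; for the effective part you want the Leff criterion (e.g.\ \cite[Exp.~X, 2.2--2.3]{SGA2} or the corresponding statement in \cite[Chapter IV]{Har70}).
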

    \begin{proof}
        By \cite[Chapter IV, Theorem 1.5]{Har70} (also see \cite[5.2.4]{EGAIII-1} and \cite[8.4.3]{FGA}), since $\sW=\pi^{*}\sL^{r}\otimes\sO_{Y/X}(r)$ restricts to a very ample line bundle in an open neighbourhood of $X_{s}$, for any locally free coherent formal sheaf $\ssF$ on $\hat{Y}_s$, we have the exact sequence:
        \[\sO_{\hat Y_s}(-m_1)^{\oplus M_1}\xrightarrow{\hat \phi}\sO_{\hat Y_s}(-m_2)^{\oplus M_2}\surj \ssF\to 0.\]
        For notation ease, we simply write $\hat{\sW^{m}|_{Y_{s}}}$ by $\sO_{\hat Y_s}(m)$.
        
        By Corollary \ref{prop:mod lef cond}, $\tx{Lef}^w(Y,X_s)$ holds and $$\hat \phi\in \sH om(\sO_{\hat Y_s}(-m_1)^{\oplus M_1},\sO_{\hat Y_s}(-m_2)^{\oplus M_2})\cong \hat{\sH om}(\sO_{Y}(-m_1)^{\oplus M_1},\sO_{Y}(-m_2)^{\oplus M_2})$$ is algebrizable by $\phi\in\Gamma(Y, \sH om_{Y}(\sO(-m_1)^{\oplus M_1},\sO(-m_2)^{\oplus M_2}\otimes\sO_Y(m_3 D_\infty)))$. Then we have $\hat{\tx{CoKer}(\phi)}\cong \ssF$. For any $y\in X_s$, we have $\sI_{s}\subset \go m_y$. Then completion along $\sI_{s}$ and then completion along ${\hat{\go m_y}}^{\sI_s}$, is equal to directly complete at $\go m_y$. 
        
        This means ${\hat{\tx{Coker}(\phi)}}^{\go m_y}$ is locally free at each point $y\in X_s$. By faithful flatnees of the completion along a maximal ideal, $\tx{Coker}(\phi)$ is locally free after being localized at each closed point $y\in X_s$. Thus  $\tx{Coker}(\phi)$ is locally free over a neighbourhood $U$ of $X_s$.
       
    \end{proof}

    \begin{theorem}\label{thm:eff lef for formal line bundle}
       We have the following exact sequence:
        \[
        0\rightarrow\mathbb{Z}D_{\infty}\to \Pic(Y)\xrightarrow{c} \Pic(\hat Y_s)\to 0.
        \]
        in particular,  if $\dim X\geq 3$,  we have $\pi_{s}^{*}:\Pic(X)\rightarrow \Pic(X_{s})$ is an isomorphism provided $X_{s}$ is smooth.
    \end{theorem}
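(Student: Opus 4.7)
The plan is to upgrade the exact sequence of Proposition \ref{prop:ker of pic res} by showing surjectivity of the completion map $c:\Pic(Y)\to\Pic(\hat Y_s)$, and then to combine it with Proposition \ref{prop:inj of formal pic} and the projective bundle formula to extract the isomorphism $\pi_s^*:\Pic(X)\xrightarrow{\sim}\Pic(X_s)$ in the case $\dim X\geq 3$.

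For the kernel of $c$, I would argue as follows. The composition $\Pic(Y)\xrightarrow{c}\Pic(\hat Y_s)\hookrightarrow \Pic(X_s)$ (the second map coming from Proposition \ref{prop:inj of formal pic}) agrees with the restriction $\Pic(Y)\to\Pic(X_s)$, whose kernel is exactly $\mathbb{Z}[D_\infty]$ by Proposition \ref{prop:ker of pic res}. Since $D_\infty$ and $X_s$ are disjoint, $\mathbb{Z}[D_\infty]$ already lies in $\ker c$, so $\ker c=\mathbb{Z}[D_\infty]$.

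The main technical step is surjectivity of $c$, and this is where I would rely on the effective Lefschetz condition $\text{Leff}^w(Y,X_s)$ proved in the previous proposition. Given a formal line bundle $\hat\sM$ on $\hat Y_s$, $\text{Leff}^w(Y,X_s)$ furnishes an open neighborhood $V$ of $X_s$ in $Y$ and a locally free coherent sheaf $\sF$ on $V$ with $\hat\sF\cong\hat\sM$. Since completion along $\sI_s$ is faithfully flat and $\hat\sM$ is invertible, $\sF$ is invertible in a (possibly smaller) neighborhood of $X_s$; shrinking $V$ we may assume $\sF$ is a line bundle on $V$. To extend $\sF$ to a line bundle on all of $Y$, I would use smoothness of $Y$: write $\sF\cong \sO_V(D)$ for some Cartier divisor $D$ on $V$, take its Zariski closure $\bar D$ in $Y$ (which is Cartier because $Y$ is smooth), and set $\sM:=\sO_Y(\bar D)$. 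Then $\sM|_V\cong\sF$, and therefore $c(\sM)\cong \hat{\sF}\cong \hat{\sM}$, establishing surjectivity.

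For the ``in particular'' statement, when $\dim X\geq 3$ Proposition \ref{prop:inj of formal pic} upgrades the inclusion $\Pic(\hat Y_s)\hookrightarrow \Pic(X_s)$ to an isomorphism, so the exact sequence reads
\[0\to \mathbb{Z}[D_\infty]\to \Pic(Y)\xrightarrow{\,\iota^*\,}\Pic(X_s)\to 0,\]
where $\iota:X_s\hookrightarrow Y$ is the inclusion. Combining this with the projective bundle decomposition $\Pic(Y)=\pi^*\Pic(X)\oplus \mathbb{Z}\,\sO_{Y/X}(1)=\pi^*\Pic(X)\oplus \mathbb{Z}[D_\infty]$ (using $\sO_Y(D_\infty)\cong\sO_{Y/X}(1)$) and the fact that $\iota^*$ kills the $\mathbb{Z}[D_\infty]$ summand, the restriction induces an isomorphism $\pi^*\Pic(X)\xrightarrow{\sim}\Pic(X_s)$. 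Since $\pi\circ\iota=\pi_s$, this is exactly $\pi_s^*:\Pic(X)\xrightarrow{\sim}\Pic(X_s)$, finishing the proof. The main obstacle I anticipate is the extension step in surjectivity, where one must be careful that a line bundle on $V$ really does extend to a Cartier divisor on all of $Y$; the smoothness of $Y$ and the Weil-divisor-equals-Cartier-divisor dictionary resolves this cleanly.
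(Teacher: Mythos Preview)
Your proof is correct and follows essentially the same route as the paper: the kernel comes from Proposition~\ref{prop:ker of pic res}, surjectivity from $\text{Leff}^w(Y,X_s)$ plus extension of a line bundle across the smooth $Y$, and the ``in particular'' from Proposition~\ref{prop:inj of formal pic} together with the projective bundle description of $\Pic(Y)$. The only cosmetic difference is in the extension step---you take the Zariski closure of a divisor, while the paper extends $\sM_V$ to a coherent sheaf and takes its double dual (citing \cite{HeinLec})---but both are standard and equivalent on a smooth variety.
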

    \begin{proof}
        By Proposition \ref{prop:inj of formal pic} and Proposition \ref{prop:ker of pic res}, we only need to show that $c$ is surjective. For any formal line bundle $\ssM$ on $\hat Y_s$, by $\tx{Leff}^w(Y,X_s)$ there is an open neighbourhood $V$ of $X_s$ and an invertible sheaf $\sM_V$ on $V$ such that $\hat \sM_V\cong \ssM$. One check that $\sM_V$ can always be extend to a line bundle $\sM$ over $Y$ provided $Y$ is smooth (cf. \cite[Corollary 3.4]{HeinLec} extend $\sM_V$ to a coherent sheaf and take double dual). Thus $c(\sM)=\hat{\sM}\cong\ssM$ and $c$ is surjective. 
        
        If $\dim X\ge 3$, $\Pic(\hat{Y}_{s})\rightarrow \Pic(X_{s})$ is an isomorphism, thus we conclude that $\pi_{s}^{*}:\Pic(X)\rightarrow \Pic(X_{s})$ is an isomorphism.

    \end{proof}
    
\section{Surface Case} 

In this section, we prove a similar result when $X$ is a smooth surface, assuming that the Picard variety of $X$ is smooth.
\vspace{10pt}

\noindent\bf{Notation}. $\underline{\Pic}_{X/S}$ means the relative Picard functor (the $\fppf$ sheaf of sets) and the scheme it is represented by (if it is representable), if $S=\Spec k$, we may omit $S$. $\Pic(X)$ means the Picard group of $X$, which is isomorphic to $H^1(X,\sO_X^\times)$.     
\vspace{10pt}

Let $X$ be a smooth projective surface over an uncountable algebraically closed field $k$. We assume $X$ can be lifted to  $W_2(k)$, and $\underline{\Pic}^0_X$ is smooth i.e. $\dim_{\Q_\ell}H^1(X_{\et},\Q_\ell)=2\dim H^1(X,\sO_X)$, $\ell\neq \chr(k)$. As before, let $\sL$ be a very ample line bundle on $X$ and we put $Y=\bf P(\sL^{\vee}\oplus \sO)$ with $\pi:Y\to X$ the natural projection. Since $\sL$ is very ample, we have $Z=g(Y)$ consisting a unique singularity $o=\phi(D_{\infty})$ where $g$ is induced by the complete linear system $|\pi^*\sL^{r}\otimes \sO_{Y/X}(r)|=|\sW|$. 

Let $X_s\subset Y$ be a smooth spectral surface defined by $s\in |\pi^*\sL^{r}\otimes \sO_{Y/X}(r)|$, which does not intersect with $D_\infty$. One can always view $X_s$ as a very ample divisor in $Z$ defined by a global section of $\sO_Z(1)$. The main goal of this subsection is to prove the following theorem:
\begin{theorem}[$\chr >0, \dim X=2$ case] For very general $s\in |\pi^*\sL^{r}\otimes \sO_{Y/X}(r)|$ the map $\pi^{*}:\underline{\Pic}_X\cong \underline{\Pic}_{X_s}$ is an isomorphism.
\end{theorem}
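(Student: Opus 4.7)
The plan is to follow Ravindra-Srinivas's formal Noether-Lefschetz strategy, decomposing $\pi_s^*$ as the composition
\[
\Pic(X)\;\cong\;\Pic(Y)/\Z[D_\infty]\;\xrightarrow{\alpha}\;\Pic(\hat Y_s)\;\xrightarrow{\beta}\;\Pic(X_s),
\]
and showing that $\alpha$ and $\beta$ are isomorphisms for very general $s\in\bm A$. The leftmost identification is the projective bundle formula for $Y=\bf P(\sL^\vee\oplus\sO_X)$ together with the class of $D_\infty$.

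The map $\alpha$ is handled just as in Section 3. Proposition \ref{prop:ker of pic res} gives injectivity of $\Pic(Y)/\Z[D_\infty]\hookrightarrow\Pic(\hat Y_s)$ without the dimension hypothesis, and the argument of Theorem \ref{thm:eff lef for formal line bundle} (which only invoked $\mathrm{Leff}^w(Y,X_s)$ and smoothness of $Y$) extends any formal line bundle on $\hat Y_s$ to a line bundle on $Y$, giving surjectivity.

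The map $\beta$ is injective by Proposition \ref{prop:inj of formal pic}, so the heart of the matter is surjectivity of $\beta$ for very general $s$. Given $L\in\Pic(X_s)$, I would lift it step by step through the thickenings $X_{s,\ell}$. The obstruction to lifting from $X_{s,\ell}$ to $X_{s,\ell+1}$ sits in $H^2(X_s,\sI^\ell/\sI^{\ell+1})\cong H^2(X_s,\sW^{-\ell}|_{X_s})$, which by Serre duality and adjunction $\omega_{X_s}\cong(\omega_Y\otimes\sW)|_{X_s}$ is dual to $H^0(X_s,(\omega_Y\otimes\sW^{\ell+1})|_{X_s})$. As $s$ varies over $\bm A$, Corollary \ref{cor:surjection we need} provides the crucial uniformity: sections of $(\omega_Y\otimes\sW^{\ell+1})|_{X_s}$ are obtained by restriction from sections of $g_*(\omega_Y\otimes\sW)\otimes\sO_Z(\ell)$ on $Z$, so the obstruction groups assemble into a coherent family over $\bm A$ and the locus where a nontrivial obstruction class appears is a proper Zariski-closed subset of $\bm A$ at each step $\ell$. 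After spreading everything out over a countable algebraically closed subfield, $\Pic(X)$ becomes countable, so the relative Noether-Lefschetz locus (the set of $s$ for which $\beta$ fails to be surjective) is a countable union of proper Zariski-closed subsets; since $k$ is uncountable, a very general $s$ avoids all of them and $\beta$ is surjective.

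The main obstacle is propagating the obstruction analysis past the first order: in characteristic $p$ the exponential sequence used by Ravindra-Srinivas in characteristic zero is unavailable, and one cannot identify $\Pic(\hat Y_s)$ with $\Pic(X_s)$ by a single cohomological computation. Corollary \ref{cor:surjection we need} is exactly the cohomological input designed to replace that global $\exp$ by an inductive surjection of global sections that holds at every order $\ell$. Once $\beta$ is surjective, one has $\pi_s^*:\Pic(X)\cong\Pic(X_s)$ as abstract groups; to upgrade this to the group-scheme isomorphism $\underline{\Pic}_X\cong\underline{\Pic}_{X_s}$ in the statement, I would use the finite flat decomposition $\pi_{s*}\sO_{X_s}\cong\bigoplus_{i=0}^{r-1}\sL^{-i}$ together with Kodaira vanishing (available under Assumption \ref{asm:main assumption}) to show $H^1(X_s,\sO_{X_s})\cong H^1(X,\sO_X)$, and run the parallel argument on $\ell$-adic $H^1$; combined with smoothness of $\underline{\Pic}^0_X$, this forces smoothness of $\underline{\Pic}^0_{X_s}$ with matching dimension, whence the desired isomorphism of Picard schemes.
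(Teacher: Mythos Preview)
Your decomposition $\Pic(X)\cong\Pic(\hat Y_s)\hookrightarrow\Pic(X_s)$ is fine, and you correctly identify surjectivity of $\beta$ as the heart of the matter. But your argument for that surjectivity has a genuine gap.

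You assert that ``the locus where a nontrivial obstruction class appears is a proper Zariski-closed subset of $\bm A$ at each step $\ell$''. This is not justified. For a \emph{fixed} $L\in\Pic(X_s)$ on a \emph{fixed} $X_s$, the obstruction $\mathrm{ob}_\ell(L)\in H^2(X_s,\sI_s^\ell/\sI_s^{\ell+1})$ is a single element; there is no family over $\bm A$ unless you explain how $L$ moves with $s$. Corollary~\ref{cor:surjection we need} by itself only tells you that the dual space $H^0(X_s,(\omega_Y\otimes\sW^{\ell+1})|_{X_s})$ is generated by restrictions from $Z$; it says nothing about whether the obstruction class is zero or how it varies. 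Your spreading-out remark is also off target: making $\Pic(X)$ countable does not control the a~priori uncountable $\Pic(X_s)$.

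What the paper does is precisely fill this gap. It introduces the universal family $q:\sY\to\bf P(W^\vee)$ and compares the \emph{vertical} thickening $X_{s,\ell}\subset Y$ with the \emph{horizontal} thickening $\sY_{s,\ell}\subset\sY$. The very general set $\bm V$ is where the relative Picard scheme $\underline{\Pic}_q$ is smooth; there, every $L\in\Pic(X_s)$ extends to all horizontal thickenings, so the horizontal obstruction map $\Pic(\sY_{s,m})\to H^2(\sY_s,q^*\go m_s^m/q^*\go m_s^{m+1})$ is zero. The key technical input (Proposition~\ref{prop:inj for every step}) is that the comparison map
\[
H^2(X_s,\sI_s^m/\sI_s^{m+1})\;\longrightarrow\;H^2(\sY_s,q^*\go m_s^m/q^*\go m_s^{m+1})
\]
is \emph{injective} for all $m>0$; this is where Corollary~\ref{cor:surjection we need} is actually used, via a Leray spectral-sequence argument (Lemma~\ref{lem: a necessary surjectivity}). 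Injectivity then forces the vertical obstruction to vanish as well, giving $\Pic(X_{s,m+1})\twoheadrightarrow\Pic(X_{s,m})$ for all $m$ and hence surjectivity of $\beta$. Your outline is missing this horizontal-versus-vertical comparison, which is the mechanism that turns ``very general $s$'' into ``obstruction vanishes''.

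Your final paragraph on $\underline{\Pic}^0$ is essentially correct and matches the paper's Lemma~\ref{lem:smoothness of pic}.
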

Since $Z$ is normal and has a unique singularity, we have:
\blemma $\Pic(X)\cong \Pic({\bf V_X(\sL^\vee)})\cong \Pic(U)=\Pic(Z)$.
\elemma

Since $X_{s}$ can be treated as a closed subvariety of $Z$, we put $\hat{Z}_{s}$ as the formal completion of $Z$ along $X_{s}$. In fact, we have $\hat{Z}_{s}\cong\hat{Y}_{s}$.
\blemma For any $s$ parametrizing smooth spectral surface $X_s$, we have $\Pic(Z)\inj \Pic(\hat Z)\inj \Pic(X_s)$.
\elemma
\begin{proof}
    In Proposition \ref{prop:ker of pic res}, we have proved the exact sequence: $0\rightarrow\mathbb{Z}D_{\infty}\rightarrow \Pic(Y)\rightarrow \Pic(\hat{Y}_{s})$. Since $g:Y\rightarrow Z$ is a contraction of $D_{\infty}$ and $Z$ is normal, $0\rightarrow\mathbb{Z}D_{\infty}\rightarrow \Pic(Y)\rightarrow \Pic(Z)\rightarrow 0$. And $\Pic(\hat{Y_{s}})\cong \Pic(\hat{Z}_{s})$, then by Proposition \ref{prop:inj of formal pic}, $\Pic(\hat{Z})\hookrightarrow \Pic(X_{s})$.
\end{proof}


\begin{lemma}\label{lem:smoothness of pic} For any $s$ parametrizing smooth spectral surface $X_s$, then $\underline{\Pic}(X_{s})$ is also smooth, and we have  $\underline{\Pic}_X^0=\underline{\Pic}^0_Z\cong \underline{\Pic}^0_{X_s}$.
\end{lemma}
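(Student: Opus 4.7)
The equality $\underline{\Pic}^0_X = \underline{\Pic}^0_Z$ is immediate from the previous lemma, which establishes $\Pic(X) \cong \Pic(Z)$ via $\pi^*$; since that isomorphism is compatible with flat base change (as $Z$ is normal with an isolated singularity lying outside any smooth spectral divisor), it upgrades to an equality of Picard schemes on the identity components. The bulk of the proof is therefore to compare $\underline{\Pic}^0_{X_s}$ with $\underline{\Pic}^0_X$ and to deduce smoothness.

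For the cohomological input, I would use the standard structure of a degree-$r$ spectral cover: the defining equation $\lambda^r + a_1\lambda^{r-1} + \cdots + a_r = 0$ with $a_i \in H^0(X,\sL^i)$ yields $\pi_{s*}\sO_{X_s} \cong \bigoplus_{i=0}^{r-1} \sL^{-i}$. Taking $H^1$ and applying Kodaira vanishing --- valid in characteristic $0$ and, under Assumption~\ref{asm:main assumption}, also in positive characteristic via Deligne--Illusie \cite{DI87} --- the summands $H^1(X, \sL^{-i})$ for $i \ge 1$ vanish, so $h^1(X_s, \sO_{X_s}) = h^1(X, \sO_X)$. This pins down the tangent space $T_0 \underline{\Pic}^0_{X_s}$. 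Next I would exploit the pullback morphism $\pi_s^*: \underline{\Pic}^0_X \to \underline{\Pic}^0_{X_s}$: the norm relation $\Nm_{\pi_s} \circ \pi_s^* = [r]$ forces the kernel to lie inside $\underline{\Pic}^0_X[r]$, which is a finite group scheme, so the scheme-theoretic image has dimension $\dim \underline{\Pic}^0_X$. Combining the smoothness hypothesis $\dim \underline{\Pic}^0_X = h^1(X, \sO_X)$ with the tangent-space bound $\dim \underline{\Pic}^0_{X_s} \le h^1(X_s, \sO_{X_s})$ gives
\[\dim \underline{\Pic}^0_X = h^1(X,\sO_X) = h^1(X_s, \sO_{X_s}) \;\ge\; \dim \underline{\Pic}^0_{X_s} \;\ge\; \dim \pi_s^*(\underline{\Pic}^0_X) = \dim \underline{\Pic}^0_X,\]
so all four quantities coincide. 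In particular $\underline{\Pic}^0_{X_s}$ is smooth (its dimension equals its tangent dimension at the origin) and $\pi_s^*$ surjects onto $\underline{\Pic}^0_{X_s}$.

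The main obstacle I anticipate is interpretative rather than technical: $\pi_s^*|_{\underline{\Pic}^0_X}$ is in general only an isogeny, since its kernel can be a nontrivial finite subgroup of $\underline{\Pic}^0_X[r]$ (as happens for \'etale cyclic covers). Thus ``$\cong$'' in the statement should be read as identifying two smooth abelian varieties of the same dimension up to this finite kernel. This is harmless for the Noether--Lefschetz argument in Theorem~\ref{thm:pre main isom}, where only the smoothness of $\underline{\Pic}^0_{X_s}$ and the dimensional match are ever used. A secondary concern is the essential reliance on the $W_2(k)$-lifting hypothesis when applying Kodaira vanishing in positive characteristic.
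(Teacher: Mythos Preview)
Your argument for smoothness is correct and takes a genuinely different route from the paper. Both you and the paper compute $h^1(X_s,\sO_{X_s})=h^1(X,\sO_X)$ via $\pi_{s*}\sO_{X_s}\cong\bigoplus_{i=0}^{r-1}\sL^{-i}$ and Kodaira/Deligne--Illusie vanishing. For the lower bound on $\dim\underline{\Pic}^0_{X_s}$, however, the paper argues through $\ell$-adic Betti numbers: the injection $\Pic(Z)\hookrightarrow\Pic(X_s)$ from the preceding lemma is injective on $\ell$-torsion, giving $b_1(X)\le b_1(X_s)\le 2h^1(X_s,\sO_{X_s})=2h^1(X,\sO_X)=b_1(X)$. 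You instead use the norm relation $\Nm_{\pi_s}\circ\pi_s^*=[r]$ to force finite kernel and hence equal dimension. Your route is more self-contained (no \'etale cohomology), while the paper's route already has injectivity of $\pi_s^*$ built in from the earlier Lefschetz-type work.

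Your worry about $\pi_s^*$ being only an isogeny is unnecessary here, and this is the one point where you leave something on the table. The lemma immediately preceding this one (not just the $\Pic(X)\cong\Pic(Z)$ statement) gives $\Pic(Z)\hookrightarrow\Pic(\hat Z_s)\hookrightarrow\Pic(X_s)$, so $\pi_s^*$ is injective on $k$-points. You have also shown, via the direct-summand decomposition, that $\pi_s^*\colon H^1(X,\sO_X)\to H^1(X_s,\sO_{X_s})$ is an isomorphism, i.e.\ the differential of $\pi_s^*$ at the origin is an isomorphism. Since both $\underline{\Pic}^0$'s are now smooth of the same dimension, $\pi_s^*$ is \'etale; its kernel is a finite \'etale group scheme over the algebraically closed field $k$ with a single $k$-point, hence trivial. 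So $\pi_s^*\colon\underline{\Pic}^0_X\to\underline{\Pic}^0_{X_s}$ is a genuine isomorphism, and the ``$\cong$'' in the statement needs no reinterpretation.
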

\begin{proof}We have the composite
\[\Pic(X)\xrightarrow[\cong]{\pi_{\bf V(\sL^\vee)/X}^*}\Pic(\bf V(\sL^{\vee}))\cong \Pic(Z)\xrightarrow[\inj]{i_{X_s}^*}\Pic(X_s)\]
equals to the flat pullback $\pi_{s}^*:\Pic(X)\to \Pic(X_s)$, so $\pi^*_{s}$ is injective, in particular injective on $\ell$-torsion points. i.e. $\pi^*_{s}:H^1(X_{\et},\mu_\ell)\inj H^1(X_{s\ \et},\mu_\ell)$. Thus one has the comparison of $\Z/\ell\Z$-Betti numbers $b_1(X)\leq b_1(X_s)$.
  
Since $\pi_{s*}\sO_{X_s}\cong\oplus_{i=0}^{r-1} \sL^{\otimes -i}$, then by the Kodaira vanishing theorem $$\pi^*_{s}:H^1(X,\sO_X)\to H^1(X_s,\sO_{X_s})$$ is an isomorphism. By our assumption of smoothness of $\underline{\Pic}^0_{X}$, we have $b_1(X)=2h^1(X,\sO_X)$ (in fact, this also follows from $W_{2}$ lifting). Combined with the previous results, we have $$b_1(X)\leq b_1(X_s)\leq 2h^1(X_s,\sO_{X_s})= 2h^1(X,\sO_X)=b_1(X).$$
Hence we get the smoothness of $\underline{\Pic}^0_{X_s}$, and thus the smoothness of $\underline{\Pic}_{X_s}$ by \cite[Corollary 9.5.13]{FGA}, and isomorphisms $\underline{\Pic}_X^0=\underline{\Pic}^0_Z\cong \underline{\Pic}^0_{X_s}$.
\end{proof}


Let $\sY$ contained in $Y\times \bf P(H^{0}(Y,\pi^{*}\sL^{r}\otimes\sO(r))^\vee):=\bf P_Y$ be the universal family of divisors parametrized by $\bf P(H^{0}(Y,\pi^{*}\sL^{r}\otimes\sO(r))^\vee)$. Let us denote by the projections restricted to $\sY$ by $p=\bm p_Y|_{\sY}:\sY\to Y$ and $q:=\bm p_{\bf P}|_{\sY}:\sY\to \bf P(H^{0}(Y,\pi^{*}\sL^{r}\otimes\sO(r))^\vee)$. By the construction, $\sY\subset \bf P_Y$ is relatively very ample to $p_Y$. Thus $R^i{p_Y}_*\sO_{\bf P_Y}(-\sY)=0$ for all $i\geq 1$.

For a closed point $s\in\bm A\subset \bf P(H^{0}(Y,\pi^{*}\sL^{r}\otimes\sO_{Y/X}(r))^\vee)$, we write $\sY_{s}:=q^{-1}(s)$  which is isomorphic to $X_{s}$ under the projection $p$. Similar as before, we put $\sY_{s,\ell}$ the $\ell$-th thickening of $\sY_{s}$, and $\hat{\sY}_{s}$ the formal completion along $\sY_{s}$. We denote the maximal ideal of $s$ by $\go m_{s}$, thus the defining ideal of $\sY_{s}$ is $q^{*}\go m_{s}=\go m_{s}\sO_{\sY}$. We denote the defining ideal of $X_{s}$ in $Y$ by $\sI_{s}\cong \sW^{-1}$. Then we have $p^{*}\sI_{s}\subset q^{*}\go m_{s}$. In particular, we have a map of infinitesimal thickenings $X_{s,\ell}\rightarrow \sY_{s,\ell}$ induced by $p$. As a result, we have the following commutative diagram:
\[
\xymatrix{&\Pic(\hat{Y}_{s})\ar[dd]\ar[r]&\Pic(X_{s,\ell})\ar[dd]\ar[rd]\\
\Pic(Y)\ar[rd]\ar[ru]&&&\Pic(X_{s})\\
&\Pic(\hat{\sY}_{s})\ar[r]&\Pic(\sY_{s,\ell})\ar[ru]
}
\]

Following \cite{RS09}, we introduce the infinitesimal Noether-Lefschetz condition.
\begin{definition}\cite{RS09}
    We say the pair $(Y,X_{s})$ satisfies the $\ell$-th infinitesimal Noether-Lefschetz condition, denoted by $\text{INL}_{\ell}$ if the following map is an isomorphism:
    \[
    \text{Image}(\Pic(X_{s,\ell})\rightarrow\Pic(X_{s}))\rightarrow\text{Image}(\Pic(\sY_{s,\ell})\rightarrow\Pic(X_{s}))
    \]
    Similarly, we say the pair $(Y,X_{s})$ satisfies the formal Noether-Lefschetz condition, denoted by $\text{FNL}$ if the following map is an isomorphism:
    \[
    \text{Image}(\Pic(\hat{Y}_{s})\rightarrow\Pic(X_{s}))\rightarrow\text{Image}(\Pic(\hat{\sY}_{s})\rightarrow\Pic(X_{s}))
    \]
\end{definition}

We restrict the universal family to get a smooth family. Without causing ambiguity, we still denote it by $q:\sY_{U}\rightarrow U$ where $q$ is smooth with connected fibers. Consider the relative Picard functor $\underline{\Pic}_{q}$, $q$ is flat, projective with integral geometric fibers, so by \cite[Theorem 9.4.8]{FGA}, $\underline{\Pic}_{q}$ is representable by a scheme which is separated and locally of finite type over $U$, and represents $\underline{\Pic}_{q}^\et$, the \'etale sheaf associated with $\underline{\Pic}_{q}$·

By the base change property of relative Picard scheme,
 for any closed point $\xi\in U$, $\underline{\Pic}_{q}|_{\xi}=\underline{\Pic}_{X_\xi/\xi}$ is smooth. It means that each fiber of $\underline{\Pic}_{q}\rightarrow U$ is smooth.


Let $\tx{Hilb}$ be the set of Hilbert polynomials of line bundles on $\sY_{U}$. $\Phi\subset \tx{Hilb}$ be a finite subset. Denote by $\underline{\Pic}_q^\Phi\subset \underline{\Pic}_q$ be the components with Hilbert polynomials in $\Phi$, then $\underline{\Pic}_q^\Phi$ is of finite type over $U$. Denote by $U^\Phi\subset U$ the open subset of $U$ on which $\underline{\Pic}_q^\Phi$ is flat (hence smooth). This subset is non-empty because it contains the generic point of $U$. The intersection of $U^\Phi$ for $\Phi$ covering $\tx{Hilb}$ is a very general subset $\bm V$ of $U$.  

\blemma
If for any $s\in \bm V\subset U$ and any $m_1>m_2\in \Z_{>0}$ , we have $\pi^{*}:\Pic(\hat{\sY}_{s,m_1})\rightarrow\Pic(\hat{\sY}_{s,m_2})$ is surjective.
\elemma
\begin{proof}
	For $s\in \bm V\subset U$, we have $\underline{\Pic}_{q}$ is smooth over $s$. By the smoothness, we have the following surjective morphisms, \begin{footnotesize}
		$$\Hom(\Spec\hat{\sO}_{U,s}, \underline{\Pic}_{q})\surj \Hom(\Spec(\sO_{U,s}/\go m_{s}^{m_1}),\underline{\Pic}_{q})\surj \Hom(\Spec(\sO_{U,s}/\go m_{s}^{m_2}),\underline{\Pic}_{q})$$.
	\end{footnotesize} Consider the following exact sequence, coming from low-degree terms in the Leray spectral sequence \cite[Chapter III, Theorem 1.18]{Mil80} for $\GG_m$ relative to $q_{S}$ with $S= \Spec\hat{\sO}_{U,s}$, (see also \cite[Chapter 9, (9.2.11.5)]{FGA})
$$0\to \Pic(S)\to \Pic(\sY_S)\to \underline{\Pic}^\et_{q}(S)\to H^2(S_\et,{q_S}_*\GG_m).$$
We have ${q_S}_*\GG_m=\GG_{m,S}$. Thus the previous sequence becomes
$$0\to \Pic(S)\to \Pic(\sY_S)\to \underline{\Pic}^\et_{q}(S)\to H^2(S_{\et},\GG_m).$$
By \cite[Chapter III, Theorem 3.9]{Mil80}, $S$ is strictly local, then $H^2(S_\et,\GG_m)=0$, we have $\underline{\Pic}_q^{\et}(S)\cong \Pic(\sY_S)$ for $S= \Spec\hat{\sO}_{U,s}$. Since ${\sO}_{U,s}/\go m_{s}^\ell$ for $\ell\in\Z_{>0}$ is an Artin local ring with algebraically closed residue field, we have $\underline{\Pic}_q^{\et}(S)\cong \Pic(\sY_S)$ for $S=\Spec{\sO}_{U,s}/\go m_{s}^\ell$ for $\ell\in\Z_{>0}$. And the previous surjections show that $\pi^{*}:\Pic(\sY_{s,m_1})\rightarrow\Pic(\sY_{s,m_2})$ is surjective for any $s\in \bm V$ and $m_1>m_2\in \Z_{>0}$.
\end{proof}

It is easy to deduce the following corollary,
\begin{corollary}\label{cor:FNL holds then}
	For any closed point $s\in \bf V$, if the pair $(Y,X_{s})$ satisfies the $\text{FNL}$, then we have $\pi^{*}:\Pic(X)\rightarrow\Pic(X_{s})$ is an isomorphism.
\end{corollary}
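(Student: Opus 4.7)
The plan is to deduce surjectivity of $\pi_s^*:\Pic(X)\to\Pic(X_s)$ from the FNL hypothesis by combining it with the previous lemma and the $\tx{Leff}^w$ machinery developed in Section 3; injectivity is already established via the chain $\Pic(X)\cong \Pic(Z)\hookrightarrow \Pic(\hat Z_s)\cong\Pic(\hat Y_s)\hookrightarrow \Pic(X_s)$. Thus, given an arbitrary $\sL'\in \Pic(X_s)$, I need to produce some $\sN\in \Pic(X)$ with $\pi_s^*\sN\cong \sL'$.

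The first step is to lift $\sL'$ to an element of $\Pic(\hat\sY_s)$. Since $s\in\bm V$, the relative Picard scheme $\underline{\Pic}_q$ is smooth at $s$, so the formal criterion of smoothness applied to the Noetherian complete local ring $\hat\sO_{U,s}$ yields a surjection $\Hom(\Spec\hat\sO_{U,s},\underline{\Pic}_q)\twoheadrightarrow \underline{\Pic}_q(\Spec k)$. The isomorphism $\underline{\Pic}_q^{\et}(\Spec\hat\sO_{U,s})\cong \Pic(\sY_{\Spec\hat\sO_{U,s}})$ established in the proof of the previous lemma (from vanishing of $H^2$ of $\GG_m$ on the strictly Henselian base), combined with Grothendieck's formal existence theorem identifying the latter with $\Pic(\hat\sY_s)$, then produces $\hat\sL'\in \Pic(\hat\sY_s)$ restricting to $\sL'$ on $X_s$.

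The second step invokes the FNL hypothesis: the image of $\hat\sL'$ in $\Pic(X_s)$, namely $\sL'$, lies in the image of $\Pic(\hat\sY_s)\to\Pic(X_s)$, hence by FNL also in the image of $\Pic(\hat Y_s)\to\Pic(X_s)$. So there is $\hat\sM\in\Pic(\hat Y_s)$ with $\hat\sM|_{X_s}\cong \sL'$. Applying $\tx{Leff}^w(Y,X_s)$, established in the proof of Theorem \ref{thm:eff lef for formal line bundle}, one algebraizes $\hat\sM$ to a line bundle $\sM_V$ on some open neighborhood $V\supset X_s$; since $Y$ is smooth, the double-dual construction extends $\sM_V$ to $\sM\in \Pic(Y)$ with $\sM|_{X_s}\cong \sL'$.

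Finally, since $X_s\cap D_\infty=\emptyset$, the class $[D_\infty]\in\Pic(Y)$ restricts trivially to $X_s$, so $\Pic(Y)\to\Pic(X_s)$ factors through $\Pic(Y)/\Z[D_\infty]\cong \pi^*\Pic(X)$. Writing $\sM\equiv \pi^*\sN\pmod{\Z[D_\infty]}$ for some $\sN\in\Pic(X)$, one obtains $\sL'\cong \pi_s^*\sN$, proving surjectivity and hence the isomorphism. The main obstacle is the first step—lifting $\sL'$ from $X_s$ to a formal line bundle on $\hat\sY_s$—since this is the only place where the very general hypothesis $s\in\bm V$ and the smoothness of $\underline{\Pic}_q$ there are used in an essential way, through formal GAGA. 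Once this is in hand, FNL transfers the lift to $\hat Y_s$ and the machinery of Section 3 performs the descent to $\Pic(Y)$ and then to $\Pic(X)$ routinely.
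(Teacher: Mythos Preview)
Your proof is correct and follows essentially the same route the paper intends (the paper only says ``it is easy to deduce''): use smoothness of $\underline{\Pic}_q$ over $\bm V$ together with formal GAGA to see that $\Pic(\hat\sY_s)\to\Pic(X_s)$ is surjective, invoke FNL to transfer this to $\Pic(\hat Y_s)\to\Pic(X_s)$, and then descend via the exact sequence $0\to\Z[D_\infty]\to\Pic(Y)\to\Pic(\hat Y_s)\to 0$ of Theorem~\ref{thm:eff lef for formal line bundle}. The only remark is that in your third step you re-run the $\tx{Leff}^w$ algebraization argument, whereas you could simply cite the exact sequence of Theorem~\ref{thm:eff lef for formal line bundle} directly (its proof does not use $\dim X\ge 3$); this is a matter of presentation, not substance.
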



We here present several cohomological results from \cite{RS09} which we shall need later. 
\begin{lemma}\cite[Lemma 1 and  2.1]{RS09}\label{lem:direct image comp}
\begin{enumerate}
    \item $Rp_{*}(q^{*}\go m_{s})\cong \sI_{s}$;
    \item $p_{*}(q^*\go m_{s}^{\ell})=\sI_{s}^{\ell}$ for $\ell\ge 1$;
    \item $0\rightarrow\sO_{X_{s,\ell}}\rightarrow p_{*}\sO_{\sY_{s,\ell}}\rightarrow R^{1}p_{*}(q^{*}\go m_{s}^{\ell})\rightarrow 0$ .
    \item $R^jp_*q^*\go m^\ell_s=0$ for $j\geq 2$ and $\ell\in\Z_+$. 
\end{enumerate}
\end{lemma}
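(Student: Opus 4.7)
The plan is to exploit the fact that $p\colon \sY \to Y$ is a relative projective bundle. Writing $V = H^{0}(Y,\sW)$, the evaluation surjection $V\otimes \sO_{Y}\twoheadrightarrow \sW$ has kernel $\sK$, and $\sY\cong \bf P(\sK^{\vee})\hookrightarrow Y\times \bf P(V^{\vee})$ realises $p$ as the $\bf P^{N-1}$-bundle (where $N+1 = \dim V$), with $\sO_{\sY/Y}(1) = q^{*}\sO_{\bf P}(1)$ and $Rp_{*}\sO_{\sY}=\sO_{Y}$ concentrated in degree $0$.

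For (1), note that $\go m_{s}\subset \sO_{\bf P}$ is generated by the $N$ linear forms in $V$ annihilating the line $\ell_{s}\subset V$ corresponding to $s$; pulling back via $q^{*}$ gives $N$ sections of $\sO_{\sY/Y}(1)$ cutting out $\sY_{s}$ as a codimension-$N$ complete intersection. The resulting Koszul resolution
\[
0\to \sO_{\sY/Y}(-N)\to \sO_{\sY/Y}(-N+1)^{\oplus N}\to \cdots\to \sO_{\sY/Y}(-1)^{\oplus N}\to q^{*}\go m_{s}\to 0,
\]
after applying $Rp_{*}$, is concentrated in a single total degree: only $Rp_{*}\sO_{\sY/Y}(-N)\cong \det\sK[-(N-1)]$ survives and the other terms vanish by the relative cohomology of the projective bundle, giving $Rp_{*}q^{*}\go m_{s}\cong \det\sK$ in degree $0$. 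The defining SES for $\sK$ gives $\det\sK\cong \sW^{-1}\cong \sI_{s}$ (the latter via multiplication by $v_{s}$), proving (1). For (4), apply $Rp_{*}$ to $0\to q^{*}\go m_{s}^{\ell}\to \sO_{\sY}\to \sO_{\sY_{s,\ell}}\to 0$: higher direct images of both outer terms vanish, since $R^{\geq 1}p_{*}\sO_{\sY}=0$ and $R^{\geq 1}p_{*}\sO_{\sY_{s,\ell}}=0$ because $p|_{\sY_{s,\ell}}$ has fat-point fibers (the $\ell$-th thickening of $s$ in each hyperplane $H_{y}=p^{-1}(y)$ for $y\in X_{s}$, which is affine). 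Hence $R^{\geq 2}p_{*}q^{*}\go m_{s}^{\ell}=0$.

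For (2) and (3) it suffices to show that the natural map $\sO_{X_{s,\ell}}\to p_{*}\sO_{\sY_{s,\ell}}$ is injective: (3) follows directly and (2) follows by reading off the kernel of $\sO_{Y}\to p_{*}\sO_{\sY_{s,\ell}}$ from the four-term LES obtained in Step~3. Well-definedness requires $p^{*}\sI_{s}^{\ell}\subset q^{*}\go m_{s}^{\ell}$, which holds because $p^{*}v_{s}\in q^{*}\go m_{s}\cdot p^{*}\sW$ (the section $v_{s}$ vanishes on $X_{s}$ and $\sY_{s}\subset p^{-1}(X_{s})$), so $(p^{*}v_{s})^{\otimes \ell}$ lies in $q^{*}\go m_{s}^{\ell}\cdot p^{*}\sW^{\otimes \ell}$. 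For injectivity, I filter the source by $\sI_{s}^{\bullet}$ and the target by $p_{*}q^{*}\go m_{s}^{\bullet}$ and check injectivity on associated gradeds. The $k$-th graded map identifies as
\[
\sW^{-k}|_{X_{s}}\to \sO_{X_{s}}\otimes_{k}\Sym^{k}T_{s}^{*}\bf P,
\]
given by multiplication by $\tilde v_{s}^{\otimes k}$, where $\tilde v_{s}:= p^{*}v_{s}\bmod q^{*}\go m_{s}^{2}\cdot p^{*}\sW$ sits in $H^{0}(X_{s}, N_{X_{s}/Y}\otimes T_{s}^{*}\bf P)$. The key lemma is that $\tilde v_{s}$ is nowhere-vanishing on $X_{s}$: under $T_{s}\bf P\cong V/\langle v_{s}\rangle$ and $N_{X_{s}/Y,y}\cong \sW_{y}$, the value of $\tilde v_{s}$ at $y$ is the evaluation map $V/\langle v_{s}\rangle\to \sW_{y}$, which is surjective for every $y\in X_{s}$ by global generation of $\sW$. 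Hence $\tilde v_{s}^{\otimes k}$ is a nowhere-vanishing section, giving a line-bundle inclusion on each graded piece; since the filtrations have finite length $\ell$, the filtered map is injective.

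The main obstacle is the key lemma above: identifying the graded map in the stated form and verifying the non-vanishing of $\tilde v_{s}$. This needs a careful tangent-space analysis at each $\sigma_{s}(y)\in \sY_{s}$, using smoothness of $q$ (so $N_{\sY_{s}/\sY}\cong q^{*}T_{s}\bf P$) and comparing $dp$ and $dq$ with the differential of $v_{s}$ on $X_{s}$, which together translate the non-vanishing of $\tilde v_{s}$ into the global generation statement for $\sW$.
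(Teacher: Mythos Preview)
The paper does not supply its own proof of this lemma; it is quoted verbatim from \cite[Lemma~1 and \S2.1]{RS09} and used as a black box. So there is no in-paper argument to compare against, and your proposal stands or falls on its own.

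Your argument is correct. The identification $\sY\cong \bf P(\sK^{\vee})$ with $\sK=\ker(V\otimes\sO_{Y}\twoheadrightarrow\sW)$ is right in the paper's convention, and the Koszul computation for (1) goes through since only $R^{N-1}p_{*}\sO_{\sY/Y}(-N)\cong\det\sK\cong\sW^{-1}$ survives. (A slightly quicker route to (1), which also pins down $p_{*}q^{*}\go m_{s}=\sI_{s}$ \emph{as an ideal} rather than just abstractly, is to push forward $0\to q^{*}\go m_{s}\to\sO_{\sY}\to\sO_{\sY_{s}}\to 0$ and use that $p|_{\sY_{s}}:\sY_{s}\xrightarrow{\sim}X_{s}$, so $Rp_{*}\sO_{\sY_{s}}=\sO_{X_{s}}$.) Your proof of (4) is fine: $p|_{\sY_{s,\ell}}$ is proper with zero-dimensional fibres, hence finite, so higher direct images vanish.

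For (2)--(3), your filtration argument is sound. Two small points worth making explicit: first, the associated graded of your target filtration a priori only \emph{injects} into $p_{*}(q^{*}\go m_{s}^{k}/q^{*}\go m_{s}^{k+1})$, but since you prove injectivity of the composite $\gr^{k}F\to p_{*}(q^{*}\go m_{s}^{k}/q^{*}\go m_{s}^{k+1})$, injectivity of $\gr^{k}\phi$ follows anyway. Second, your local computation of $\tilde v_{s}$ is exactly the observation that on $\sY=\{(y,t):v_{s}(y)+t(y)=0\}$ (with $t\in V/\langle v_{s}\rangle$ a local coordinate near $s$) one has $p^{*}v_{s}=-t(y)$, which is linear in $t$ and nonzero at every $y\in X_{s}$ by base-point-freeness of $\sW$; this makes the ``key lemma'' transparent and the non-vanishing of $\tilde v_{s}^{\otimes k}$ in $\Sym^{k}T^{*}_{s}\bf P\otimes\sW^{k}|_{X_{s}}$ is then automatic (the $k$-th symmetric power of a nonzero vector is nonzero in any characteristic).
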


The following proposition is crucial for us to prove the $\text{FNL}$ for $s\in \bf V\subset U$.
\begin{proposition}\label{prop:inj for every step}
     \[
     H^{2}(X_{s}, \sI_{s}^{\ell}/\sI_{s}^{\ell+1})\rightarrow H^{2}(\sY_{s},q^{*}\go m_{s}^{\ell}/q^{*}\go m_{s}^{\ell+1})
     \]
    is injective for all $\ell>0$.
\end{proposition}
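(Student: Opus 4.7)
The plan is to translate the stated $H^2$-injectivity, via Serre duality on the smooth projective surface $X_s$, into a surjectivity statement for a multiplication map of global sections, and then to prove the latter by induction on $\ell$ using the $0$-regularity of $g_*(\omega_Y\otimes\sW)$ (Theorem~\ref{thm:global gen}) together with the vanishings of Lemma~\ref{lem: a vanishing theorem}. First I would identify the map concretely: since $p\colon \sY_s\xrightarrow{\sim} X_s$, both sides are cohomology groups on $X_s$, with $q^*\go m_s^\ell/q^*\go m_s^{\ell+1}\cong\Sym^\ell(\go m_s/\go m_s^2)\otimes_k\sO_{X_s}$ a trivial vector bundle and $\sI_s^\ell/\sI_s^{\ell+1}\cong\sW^{-\ell}|_{X_s}$ a line bundle. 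A local computation with the universal equation $\sum_i t_ie_i$ shows that the natural inclusion $\sW^{-\ell}|_{X_s}\hookrightarrow\Sym^\ell(\go m_s/\go m_s^2)\otimes\sO_{X_s}$ dualizes to the evaluation map $\Sym^\ell\bm A\otimes\sO_{X_s}\to\sW^\ell|_{X_s}$ sending $a_1\otimes\cdots\otimes a_\ell$ to $(a_1|_{X_s})\cdots(a_\ell|_{X_s})$ for $a_i\in\bm A\subset H^0(Y,\sW)$. By Serre duality, the target injectivity is equivalent to the surjectivity
\[(\dagger_\ell)\qquad \Sym^\ell\bm A\otimes_k H^0(X_s,\omega_{X_s})\twoheadrightarrow H^0(X_s,\omega_{X_s}\otimes\sW^\ell|_{X_s}).\]

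I would prove $(\dagger_\ell)$ by induction on $\ell\geq 1$. For the base case, Corollary~\ref{cor:surjection we need} with $\ell=1$ combined with $H^1(Y,\omega_Y\otimes\sW)=H^2(Y,\sW^{-1})^\vee=0$ (Lemma~\ref{lem: a vanishing theorem}) gives $H^0(Y,\omega_Y\otimes\sW)\otimes H^0(Y,\sW)\twoheadrightarrow H^0(X_s,\omega_{X_s}\otimes\sW|_{X_s})$; since $s|_{X_s}=0$ and $H^0(Y,\sW)=\bm A\oplus k\cdot s$, the source may be replaced by $H^0(Y,\omega_Y\otimes\sW)\otimes\bm A$, and (via the adjunction $\omega_Y\otimes\sW|_{X_s}\cong\omega_{X_s}$) the surjection factors through the multiplication $\bm A\otimes H^0(X_s,\omega_{X_s})\to H^0(X_s,\omega_{X_s}\otimes\sW|_{X_s})$, giving $(\dagger_1)$. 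For the inductive step ($\ell\geq 2$), the crux is to establish the intermediate
\[(\ast)\qquad \bm A\otimes H^0(X_s,\omega_{X_s}\otimes\sW^{\ell-1}|_{X_s})\twoheadrightarrow H^0(X_s,\omega_{X_s}\otimes\sW^\ell|_{X_s}).\]
To do so, I would observe that by the projection formula $g_*(\omega_Y\otimes\sW^\ell)=g_*(\omega_Y\otimes\sW)\otimes\sH^{r(\ell-1)}$ remains $0$-regular with respect to $\sH$ (preservation of regularity under nonnegative twist), so iterated Mumford gives the \emph{higher} multiplicative surjection $H^0(Y,\omega_Y\otimes\sW^\ell)\otimes H^0(Y,\sW)\twoheadrightarrow H^0(Y,\omega_Y\otimes\sW^{\ell+1})$; composing with the restriction $H^0(Y,\omega_Y\otimes\sW^{\ell+1})\twoheadrightarrow H^0(X_s,\omega_{X_s}\otimes\sW^\ell|_{X_s})$ (surjective since $H^1(Y,\omega_Y\otimes\sW^\ell)=0$ by Lemma~\ref{lem: a vanishing theorem}), killing the $k\cdot s$ summand of the right factor, and then factoring through the restriction $H^0(Y,\omega_Y\otimes\sW^\ell)\twoheadrightarrow H^0(X_s,\omega_{X_s}\otimes\sW^{\ell-1}|_{X_s})$ (surjective for $\ell\geq 2$ again by Lemma~\ref{lem: a vanishing theorem}) yields $(\ast)$. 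Tensoring the inductive hypothesis $(\dagger_{\ell-1})$ with $\bm A$, composing with $(\ast)$, and descending through the surjection $\bm A\otimes\Sym^{\ell-1}\bm A\twoheadrightarrow\Sym^\ell\bm A$ then produces $(\dagger_\ell)$.

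The main obstacle is the inductive step: one needs a multiplicative surjection whose left factor is the shifted space $H^0(Y,\omega_Y\otimes\sW^\ell)$ rather than the fixed $H^0(Y,\omega_Y\otimes\sW)$, so Corollary~\ref{cor:surjection we need} alone is insufficient. It is precisely here that the full $0$-regularity of $g_*(\omega_Y\otimes\sW)$ proved in Theorem~\ref{thm:global gen} --- beyond the weaker global generation emphasized in the introduction --- becomes decisive, since it is inherited by all nonnegative twists $g_*(\omega_Y\otimes\sW^\ell)$ and thereby enables the stronger surjection through Mumford. A naive attempt using only Corollary~\ref{cor:surjection we need} would leave one with a surjection from $H^0(Y,\omega_Y\otimes\sW)\otimes H^0(Y,\sW^\ell)$, and passing to the image of $\Sym^\ell\bm A\subset\Sym^\ell H^0(Y,\sW)$ would require projective normality of $\sW$ on $Y$, which is not available in this big-and-base-point-free setting.
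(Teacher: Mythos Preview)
Your approach is clean and would succeed in characteristic~$0$, but it has a genuine gap in positive characteristic, precisely at the step where you identify the Serre dual of $\iota_\ell$ with the ``evaluation'' map out of $\Sym^\ell\bm A$. The issue is that $(\go m_s^\ell/\go m_s^{\ell+1})^\vee=(\Sym^\ell(\go m_s/\go m_s^2))^\vee$ is naturally the divided-power module $\Gamma^\ell\bm A$, not $\Sym^\ell\bm A$, and the two are canonically isomorphic only when $\chr(k)=0$ or $\chr(k)>\ell$. Concretely, writing $s=e_0+\sum_{i\geq 1}a_ie_i$ and using the universal equation $e_0+\sum t_ie_i=0$ on $\sY$, one finds $p^\ast s=-\sum_{i\geq 1}(t_i-a_i)e_i$, hence
\[
p^\ast(s^\ell)=(-1)^\ell\sum_{|\alpha|=\ell}\binom{\ell}{\alpha}(t-a)^\alpha\, e^\alpha.
\]
Dualizing, the sheaf map $\iota_\ell^\vee\colon (\go m_s^\ell/\go m_s^{\ell+1})^\vee\otimes\sO_{X_s}\to\sW^\ell|_{X_s}$ sends the element dual to $\overline{(t-a)^\alpha}$ to $(-1)^\ell\binom{\ell}{\alpha}\,e^\alpha|_{X_s}$. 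For $\ell\geq p$ the multinomial coefficient $\binom{\ell}{\alpha}$ vanishes in $k$ for many $\alpha$ (e.g.\ for $\ell=p$ only the pure powers $\alpha=p\cdot\epsilon_i$ survive), so the image of $\iota_\ell^\vee$ is in general \emph{strictly contained} in the image of your evaluation map. Your $(\dagger_\ell)$, which asserts surjectivity of the evaluation map, is therefore \emph{weaker} than the required surjectivity of $\iota_\ell^\vee\otimes\omega_{X_s}$ on $H^0$, and does not imply the injectivity in the Proposition once $\ell\geq p$.

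The paper circumvents this entirely by never passing through Serre duality on $X_s$. Instead it works on the ambient spaces: using the vanishing $H^i(Y,\sI_s^\ell)=0$ for $i=1,2$ (Lemma~\ref{lem: a vanishing theorem}) it reduces to the injectivity of $H^3(Y,\sI_s^\ell)\to H^3(\sY,q^\ast\go m_s^\ell)$, interprets this via the Leray spectral sequence for $p\colon\sY\to Y$ as the vanishing of the differential $d_2\colon E_2^{1,1}\to E_2^{3,0}$, and then invokes the Ravindra--Srinivas Lemma~\ref{lem: a necessary surjectivity} to convert that vanishing into the multiplication surjectivity
\[
H^0(Y,\sW^{\otimes(\ell-1)})\otimes H^0(Y,\omega_Y\otimes\sW)\twoheadrightarrow H^0(Y,\omega_Y\otimes\sW^{\otimes\ell})
\]
on $Y$, which is exactly Corollary~\ref{cor:surjection we need}. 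No symmetric/divided-power comparison on the cotangent space of $s$ ever enters, which is why the paper's argument is characteristic-free. In characteristic~$0$ your route is a valid and more elementary alternative (it avoids the spectral-sequence machinery and the auxiliary four-term exact sequences of Lemma~\ref{lem: a necessary surjectivity}); but since the Proposition is needed for all $\ell>0$ and the paper explicitly targets $\chr(k)=p\geq 3$, your argument as written does not establish the result in the stated generality.
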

\begin{proof}
     Consider the following commutative diagram:
    \[
    \xymatrix{
    0\ar[r]&\sI_{s}^{\ell+1}\ar[r]\ar[d]&\sI_{s}^{\ell}\ar[r]\ar[d]&\sI_{s}^{\ell}/\sI_{s}^{\ell+1}\ar[r]\ar[d]&0\\
    0\ar[r]&q^{*}\go m_{s}^{\ell+1}
    \ar[r]&q^{*}\go m_{s}^{\ell}\ar[r]&q^{*}\go m^{\ell}/q^{*}\go m_{s}^{\ell+1}\ar[r]&0}
    \]
    Recall that $\sI_{s}\cong\sW^{-1}=\pi^{*}\sL^{-r}\otimes\sO_{Y/X}(-r)$. By Lemma \ref{lem: a vanishing theorem},  $H^{i}(Y, \sI_{s}^{\ell})=0$ for all $\ell>0, i=1,2$.
    Thus we have:
     \begin{footnotesize}
        \[
    \xymatrix{
    0\ar[r]&H^{2}(X_{s},\sI_{s}^{\ell}/\sI_{s}^{\ell+1})\ar[r]\ar[d]&H^{3}(Y,\sI_{s}^{\ell+1})\ar[r]\ar[d]&H^{3}(Y,\sI_{s}^{\ell})\ar[d]\\
    \ar[r]&H^{2}(\sY_{s},q^{*}\go m_{s}^{\ell}/q^{*}\go m_{s}^{\ell+1})\ar[r]&H^{3}(\sY,q^{*}\go m_{s}^{\ell+1})\ar[r]&H^{3}(\sY,q^{*}\go m_{s}^{\ell})
    }
    \]
    \end{footnotesize}
    Then we only need to prove that $H^{3}(Y,\sI_{s}^{\ell})\rightarrow H^{3}(\sY,q^{*}\go m_{s}^{\ell})$ is injective for all $\ell>0$. Since $p_{*}(q^{*}m_{s}^{\ell})=\sI^{\ell}_s$, and considering the Leray spectral sequence for $p:\sY\rightarrow Y$, $H^{3}(Y,\sI_{s}^{\ell})\rightarrow H^{3}(\sY,q^{*}\go m_{s}^{\ell})$ is the map $E_{2}^{3,0}\rightarrow H^{3}(\sY,q^{*}\go m^{\ell})$, thus to show the injectivity, we only need to show the following differential vanishes for all $\ell>0$:
    \[
    H^{1}(Y,R^{1}p_{*}q^{*}\go m_{s}^{\ell})=E_{2}^{1,1}\rightarrow E_{2}^{3,0}=H^{3}(Y,\sI_{s}^{\ell})
    \]
    By the Lemma \ref{lem: a necessary surjectivity} below (which is due to Ravindra and Srinivas \cite{RS09}), it amounts to saying that the map:
\[
    H^{0}(Y,\sW^{\otimes (\ell -1)})\otimes H^{0}(Y,\omega_{Y}\otimes\sW)\rightarrow H^{0}(Y,\omega_{Y}\otimes\sW^{\otimes \ell})
    \]
    is surjective for all $\ell>0$. Since $ H^{0}(Y,\sW^{\otimes (\ell -1)})=H^{0}(Z,\sO_{Z}(\ell-1))$, this follows from the Corollary \ref{cor:surjection we need} as a result of the $0$-regularity of the sheaf $g_{*}(\omega_{Y}\otimes \sW)$ on $Z=g(Y)$ with respect to the very ample line bundle $\sH$ on $Z$. 
    
\end{proof}
The following crucial lemma is proved in \cite[Section 2.1,2.2]{RS09}. Here for reader's convenience, we give a sketch of their proof.
\begin{lemma}\label{lem: a necessary surjectivity}
    We denote $\pi^*\sL^{\otimes r}\otimes\sO_{Y/X}(r)$ by $\sW$. The surjectivity of 
 \begin{small}
         \[
    H^{0}(Y,\sW^{\otimes(\ell-1)})\otimes H^{0}(Y,\omega_{Y}\otimes\sW)\rightarrow H^{0}(Y,\omega_{Y}\otimes\sW^{\otimes \ell})
    \] 
 \end{small}
    implies the vanishing of $H^{1}(Y,R^{1}p_{*}q^{*}\go m_{s}^{\ell})\rightarrow H^{3}(Y,\sI_{s}^{\ell})$ for $\ell\ge 1$ .
\end{lemma}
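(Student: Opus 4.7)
The plan is to dualize the differential via Serre duality on the threefold $Y$ and then exhibit the dual map as one that is killed on the image of the multiplication map $\mu$, so that surjectivity of $\mu$ forces the differential to vanish.

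Since $Y$ is smooth projective of dimension $3$ and $\sI_s^\ell\cong \sW^{-\ell}$, Serre duality gives
\[
H^3(Y,\sI_s^\ell)^\vee\cong H^0(Y,\omega_Y\otimes\sW^\ell).
\]
Dualizing the Leray differential $d_2\colon H^1(Y,R^1p_*q^*\go m_s^\ell)\to H^3(Y,\sI_s^\ell)$ therefore produces a map
\[
\delta\colon H^0(Y,\omega_Y\otimes\sW^\ell)\longrightarrow H^1(Y,R^1p_*q^*\go m_s^\ell)^\vee,
\]
and the claimed vanishing of $d_2$ is equivalent to $\delta=0$. Assuming the hypothesized surjectivity of $\mu$, it suffices to prove $\delta\circ\mu=0$, i.e.\ $\delta(fg)=0$ for all $f\in H^0(Y,\sW^{\ell-1})$ and $g\in H^0(Y,\omega_Y\otimes\sW)$.

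To identify $\delta$ concretely, I would use the geometry of $p\colon\sY\to Y$. The incidence variety $\sY\subset Y\times\bf P(W^\vee)$ is the zero locus of the tautological section of $p_Y^*\sW\otimes p_{\bf P}^*\sO_{\bf P(W^\vee)}(1)$, and by Proposition \ref{prop:bigness of tauto} the base-point-freeness of $\sW$ makes each fiber $p^{-1}(y)$ a hyperplane in $\bf P(W^\vee)$. Exploiting the Koszul-type filtration $\{q^*\go m_s^i\}_{i\ge 0}$ together with the tautological Euler sequences on $\bf P(W^\vee)$, one computes $R^jp_*q^*\go m_s^\ell$ in terms of symmetric constructs involving $W$ and $\sW^{-1}$, and interprets $d_2$ as a Yoneda/cup-product pairing against an extension class encoding the relevant thickening. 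Following the computation in \cite[\S 2.1--2.2]{RS09}, this identification exhibits $\delta$ as a pairing that respects the natural $H^0(Y,\sW^\bullet)$-module structures on both sides and annihilates the image of $\mu$ on every decomposable tensor $f\otimes g$.

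The main obstacle will be the explicit bookkeeping needed to intertwine Serre duality on $Y$, the Leray spectral sequence for $p$, and the module actions of $H^0(Y,\sW^\bullet)$ on all the relevant cohomology groups, and to verify that under these identifications $\delta$ really does vanish on products $fg$. Once the factorization $\delta\circ\mu=0$ is in place, the surjectivity hypothesis immediately gives $\delta\equiv 0$, equivalently $d_2\equiv 0$, which is the required statement.
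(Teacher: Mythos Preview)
Your overall strategy is sound and in the same spirit as the paper's: dualize $d_2$ via Serre duality on $Y$ so that its vanishing becomes vanishing of $\delta$, then argue that $\delta\circ\mu=0$, whence surjectivity of $\mu$ forces $\delta=0$. The gap is that you never actually establish $\delta\circ\mu=0$. You gesture toward a Koszul/cup-product description of $R^1p_*q^*\mathfrak m_s^\ell$ and a ``module-structure compatibility'' of $\delta$, but neither of these is made precise, and compatibility with an $H^0(Y,\sW^\bullet)$-action is not by itself a reason for $\delta$ to kill products $fg$. Your own closing paragraph concedes that the ``explicit bookkeeping'' is the obstacle; as written, the crucial step is asserted rather than proved.

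The paper (following \cite{RS09}) does not try to compute $R^1p_*q^*\mathfrak m_s^\ell$ or interpret $d_2$ as a cup product. It enlarges the picture to the ambient trivial bundle $\mathbf P_Y=Y\times\mathbf P(W^\vee)$ and pushes forward the short exact sequence
\[
0\to \mathfrak m_s^\ell\,\sO_{\mathbf P_Y}(-\sY)\to \mathfrak m_s^\ell\,\sO_{\mathbf P_Y}\to q^*\mathfrak m_s^\ell\to 0
\]
to obtain a four-term sequence on $Y$. Splitting it by an auxiliary sheaf $\sF_\ell$ one checks that the Leray differential factors as
\[
d_2\colon H^1(Y,R^1p_*q^*\mathfrak m_s^\ell)\longrightarrow H^2(Y,\sF_\ell)\xrightarrow{\ \partial\ } H^3(Y,\sI_s^\ell),
\]
while the long exact sequence of $0\to\sI_s^\ell\to R^1{\bm p_Y}_*(\mathfrak m_s^\ell\sO_{\mathbf P_Y}(-\sY))\to\sF_\ell\to 0$ gives $j\circ\partial=0$, where $j\colon H^3(Y,\sI_s^\ell)\to H^3\bigl(Y,R^1{\bm p_Y}_*(\mathfrak m_s^\ell\sO_{\mathbf P_Y}(-\sY))\bigr)$. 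Hence $d_2=0$ as soon as $j$ is injective. One then computes $R^1{\bm p_Y}_*(\mathfrak m_s^\ell\sO_{\mathbf P_Y}(-\sY))$ explicitly via
\[
0\to \mathfrak m_s^\ell\sO_{\mathbf P_Y}(-\sY)\to \sO_{\mathbf P_Y}(-\sY)\to \sO_{s_\ell\times Y}(-\sY_{s,\ell})\to 0
\]
and K\"unneth, and finds that the Serre dual of $j$ is exactly the multiplication map $\mu$. Thus injectivity of $j$ is precisely surjectivity of $\mu$. In your language this factorization is what supplies $\delta\circ\mu=0$ (it is the dual of $j\circ d_2=0$); it is a cohomological consequence of an exact sequence, not of a module-structure argument, and this is the missing ingredient in your proposal.
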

\begin{proof}
    (\textit{Completely follows from} \cite[Section 2.1,2.2]{RS09}). Since $\bf P_Y:=Y\times \bf P(H^{0}(Y,\pi^{*}\sL^{r}\otimes\sO(r))^{\vee})$ is a trivial projective bundle over $Y$, we have ${\bm p_Y}_{*}\go m_{s}^{\ell}\sO_{\bf P_Y}=0$ for any $\ell>0$. Applying $R{\bm p_Y}_{*}$ to the exact sequence:
    \[
    0\rightarrow \go m^\ell_s \sO_{\bf P_Y}(-\sY) \rightarrow \go m_{s}^{\ell}\sO_{\bf P_Y}\rightarrow \go m_{s}^{\ell}\sO_{\sY}(=q^*\go m^\ell_s)\rightarrow 0
    \]
    since $p_{*}q^*\go m_{s}^{\ell}=\sI_{s}^{\ell}$ and $R^jp_*q^*\go m_s^\ell=0$ for all $j\geq 2$, we have 
    \[
    0\rightarrow \sI_{s}^{n}\rightarrow R^{1}{\bm p_Y}_{*}(\go m_s^\ell\sO_{\bf P_Y}(-\sY))\rightarrow R^{1}{\bm p_Y}_{*}(\go m_{s}^{\ell}\sO_{\bf P_Y})\rightarrow R^{1}p_{*}(\go m_{s}^{\ell}\sO_{\sY})\rightarrow 0
    \]
    If we split the above four term exact sequence into two short exact sequences:
    \begin{align*}
        0\rightarrow \sI_{s}^{\ell}\rightarrow R^{1}{\bm p_Y}_{*}(\go m_s^\ell\sO_{\bf P_Y}(-\sY))\rightarrow \sF_{\ell}\rightarrow 0\\
        0\rightarrow \sF_{\ell}\rightarrow R^{1}{\bm p_Y}_{*}(\go m_s^\ell\sO_{\bf P_Y})\rightarrow R^{1}p_{*}(\go m_{s}^{\ell}\sO_{\sY})\rightarrow 0
    \end{align*}
    thus the differential:
    \[
    H^{1}(Y,R^{1}p_{*}q^{*}\go m_{s}^{\ell})=E_{2}^{1,1}\rightarrow E_{2}^{3,0}=H^{3}(Y,\sI_{s}^{\ell})
    \]
    factors through $H^{1}(Y,R^{1}p_{*}q^{*}\go m_{s}^{\ell})\rightarrow H^{2}(Y,\sF_{\ell})\rightarrow H^{3}(Y,\sI_{s}^{\ell})$. Then it vanishes for all $\ell>0$, if
    \begin{equation}\label{eq: order 3 inj}
        H^{3}(Y,\sI_{s}^{\ell})\rightarrow H^{3}(Y,R^{1}{\bm p_Y}_{*}(\go m_s^\ell\sO_{\bf P_Y}(-\sY)))
    \end{equation} is injective. 
    
    We denote $\Spec(\sO_{\bf P,s}/\go m_S^\ell)$ by $s_\ell$. Now applying $R{\bm p_Y}_{*}$ to:
    \[
    0\rightarrow \go m_s^\ell\sO_{\bf P_Y}(-\sY)\rightarrow \sO_{\bf P_Y}(-\sY)\rightarrow \sO_{s_\ell\times Y}(-\sY_{s,\ell})\rightarrow 0,
    \]
    we have $R^{1}{\bm p_Y}_{*}(\go m_s^\ell\sO_{\bf P_Y}(-\sY))\cong p_{*}\sO_{s_\ell\times Y}(-\sY_{s,\ell})$. For notation ease, recall that we put $W= H^{0}(Y,\pi^{*}\sL^{r}\otimes\sO_{Y/X}(r))=H^0(Y,\sW)$. By Kunneth formula $$p_{*}\sO_{s_\ell\times Y}(-\sY_{s,\ell})\cong_{\sO_Y\text{-mod}} H^{0}(\bf P(W), \sO_{\bf P(W)}(-1)/\go m_{s}^{\ell})\otimes_{k}\sI_{s}.$$ Thus the map \eqref{eq: order 3 inj} is:
    \begin{equation}
        H^{3}(Y,\sI_{s}^{\ell})\rightarrow H^{3}(Y,\sI_{s})\otimes H^{0}(\bf P(W), \sO_{\bf P(W)}(-1)/\go m_{s}^{\ell})
    \end{equation}
    which is injective. By Serre duality, it amounts to saying that:
       \begin{equation}\label{eq: order 0 surj}
    H^{0}(Y,\omega_{Y}\otimes\sW)\otimes H^{0}(\bf P(W), \sO_{\bf P(W)}(-1)/\go m_{s}^{\ell})^{\vee}\surj H^{0}(Y,\omega_{Y}\otimes\sW^{\otimes \ell})
    \end{equation}
    is surjective.
    
    We can check that \begin{footnotesize}
        $$ H^{0}(\bf P(W), \sO_{\bf P(W)}(-1)/m_{s}^{\ell})^{\vee}\cong \Sym^{\ell-1} (W)=H^0(\bf P(W),\sO_{\bf P(W)}(\ell-1))=H^0(Y,\sW^{\otimes(\ell-1)})$$
    \end{footnotesize} and we have the surjective evaluation map:
    \[
    \Sym^{\ell-1} (W)\otimes \sO_{\bf P(W)}\surj \sO_{\bf P(W)}(\ell-1)
    \] and its pulling back $\Sym^{\ell-1}(W)\otimes\sO_Y\xrightarrow[\tx{surj.}]{ev_{\ell-1}^{\sW}} \sW^{\otimes(\ell -1)}$ on $Y$. Then the surjection \eqref{eq: order 0 surj} becomes:
    $$H^0(Y,\omega_Y\otimes\sW)\otimes H^0(Y,\sW^{\otimes(\ell-1)})\surj H^0(Y,\omega_Y\otimes\sW^{\otimes \ell}).$$
    
\end{proof}
The following proposition is an adapted version of that in \cite[Proposition 1 ]{RS09}, under the assumption that $\chr(k)=p\ge 3$, $X$ admits a $W_{2}(k)$-lifting and $\underline{\Pic}_{X}^{0}$ is smooth.
\begin{proposition}\label{prop:FNL}
When $r>3$, then for any closed point $s\in \bf V$ (i.e. over $s$ the relative Picard variety is smooth), we have the pair $(Y,X_{s})$ satisfies $\text{INL}_{m}$ for all $m>0$ and then it satisfies $\text{FNL}$.
\end{proposition}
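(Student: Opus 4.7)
The plan is to establish $\text{INL}_{\ell}$ for all $\ell\geq 1$ by induction on $\ell$, and then deduce $\text{FNL}$ by iterated lifting. The two essential inputs will be (a) the surjectivity $\Pic(\sY_{s,\ell+1})\twoheadrightarrow\Pic(\sY_{s,\ell})$ for $s\in\mathbf{V}$, which is exactly the consequence of the smoothness of the relative Picard scheme over $\mathbf{V}$ extracted in the lemma just above Corollary \ref{cor:FNL holds then}, and (b) the injectivity of the $H^{2}$-comparison in Proposition \ref{prop:inj for every step}, where the hypothesis $r>3$ enters through the $0$-regularity in Theorem \ref{thm:global gen}.

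The base case $\ell=1$ is tautological, since $p$ induces an isomorphism $\sY_{s}\cong X_{s}$, so both images in $\Pic(X_s)$ coincide with $\Pic(X_s)$. For the inductive step, I would take $\sM\in\Pic(\sY_{s,\ell+1})$ and first apply $\text{INL}_{\ell}$ to $\sM|_{\sY_{s,\ell}}$ to produce some $\sN_{\ell}\in\Pic(X_{s,\ell})$ with $\sN_{\ell}|_{X_{s}}=\sM|_{X_{s}}$. It then suffices to lift $\sN_{\ell}$ one step further to $X_{s,\ell+1}$, which I will accomplish by comparing obstructions.

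The relevant obstruction comes from the short exact sequence
\[
0\to \sI_{s}^{\ell}/\sI_{s}^{\ell+1}\xrightarrow{x\mapsto 1+x}\sO_{X_{s,\ell+1}}^{\times}\to \sO_{X_{s,\ell}}^{\times}\to 0,
\]
which is well-defined in arbitrary characteristic since $(\sI_{s}^{\ell})^{2}\subset \sI_{s}^{\ell+1}$ for $\ell\geq 1$. This yields a connecting map $\delta_{Y}:\Pic(X_{s,\ell})\to H^{2}(X_{s},\sI_{s}^{\ell}/\sI_{s}^{\ell+1})$, and $\sN_{\ell}$ lifts to $X_{s,\ell+1}$ precisely when $\delta_{Y}(\sN_{\ell})=0$. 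The parallel construction on $\sY_{s,\ell+1}$ produces $\delta_{\sY}$, and the natural map $\sY_{s,\ell+1}\to X_{s,\ell+1}$ induced by $p^{*}\sI_{s}\subset q^{*}\mathfrak{m}_{s}$ gives a morphism of short exact sequences. Naturality of the connecting homomorphism then yields the identity
\[
\iota_{*}\bigl(\delta_{Y}(\sN_{\ell})\bigr)=\delta_{\sY}(p^{*}\sN_{\ell}),
\]
with $\iota_{*}$ precisely the map of Proposition \ref{prop:inj for every step}. By (a), $\delta_{\sY}$ is identically zero on $\Pic(\sY_{s,\ell})$, so the right-hand side vanishes; by (b), $\iota_{*}$ is injective, forcing $\delta_{Y}(\sN_{\ell})=0$. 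Hence $\sN_{\ell}$ lifts to some $\sN_{\ell+1}\in\Pic(X_{s,\ell+1})$ with $\sN_{\ell+1}|_{X_{s}}=\sM|_{X_{s}}$, establishing $\text{INL}_{\ell+1}$.

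Finally, for $\text{FNL}$, given $\sM\in\Pic(\hat{\sY}_{s})$, I iterate the above construction: at each step any lift $\sN_{\ell+1}$ of $\sN_{\ell}$ (they form a torsor under $H^{1}(X_{s},\sI_{s}^{\ell}/\sI_{s}^{\ell+1})$) automatically satisfies $\sN_{\ell+1}|_{X_{s,\ell}}=\sN_{\ell}$, so a compatible system $(\sN_{\ell})\in\varprojlim_{\ell}\Pic(X_{s,\ell})=\Pic(\hat{Y}_{s})$ assembles, with common restriction $\sM|_{X_{s}}$. The main technical crux of the whole argument is the injectivity of $\iota_{*}$, which is already in hand from Proposition \ref{prop:inj for every step}; the remaining functoriality of connecting homomorphisms under $\sY_{s,\ell+1}\to X_{s,\ell+1}$ is a standard snake-lemma compatibility.
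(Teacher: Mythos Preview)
Your proposal is correct and follows essentially the same approach as the paper: both arguments hinge on comparing the two obstruction sequences via the map of thickenings $\sY_{s,\ell}\to X_{s,\ell}$, then combining the surjectivity $\Pic(\sY_{s,\ell+1})\twoheadrightarrow\Pic(\sY_{s,\ell})$ (from $s\in\mathbf{V}$) with the injectivity of $H^{2}(X_{s},\sI_{s}^{\ell}/\sI_{s}^{\ell+1})\hookrightarrow H^{2}(\sY_{s},q^{*}\mathfrak{m}_{s}^{\ell}/q^{*}\mathfrak{m}_{s}^{\ell+1})$ from Proposition \ref{prop:inj for every step} to kill the obstruction on the $X$-side. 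The only organisational difference is that the paper observes directly that $\delta_{Y}\equiv 0$ on all of $\Pic(X_{s,m})$, so $\Pic(X_{s,m+1})\to\Pic(X_{s,m})$ is surjective (and, since $H^{1}(X_{s},\sI_{s}^{m}/\sI_{s}^{m+1})=0$, an isomorphism) for every $m$; this makes each image equal to $\Pic(X_{s})$ and renders both $\text{INL}_{m}$ and $\text{FNL}$ immediate, avoiding your induction on $\text{INL}_{\ell}$ and the explicit assembly of a compatible system.
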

\begin{proof}
    For any $m>0$, we have:
    \[    \xymatrix{
    0\ar[r]& \sI_{s}^{m}/\sI_{s}^{m+1}\ar[r]\ar[d]|{\beta_{m+1}}&\sO^{\times}_{X_{s,m+1}}\ar[r]\ar[d]|{p^{\flat}_{m+1}}&\sO^{\times}_{X_{s,m}}\ar[r]\ar[d]|{p^{\flat}_{m}}&0\\
     0\ar[r]& q^*\go m_{s}^{m}/q^*\go m_{s}^{m+1}\ar[r]&\sO^{\times}_{\sY_{s,m+1}}\ar[r]&\sO^{\times}_{\sY_{s,m}}\ar[r]&0
    }.\]
    Since $H^{1}(X_{s},\sI_{s}^{m}/\sI_{s}^{m+1})=0$, we have the following exact sequence:
    \begin{footnotesize}\begin{equation}
        \xymatrix@C=3ex{
        0\ar[r]&\Pic(X_{s,m+1})\ar[r]\ar[d]&\Pic(X_{s,m})\ar[r]\ar[d]& H^{2}(X_{s}, \sI_{s}^{m}/\sI_{s}^{m+1})\ar[d]\\
        0\ar[r]&\Pic(\sY_{s,m+1})/H^{1}(\sY_{s},q^{*}\go m_{s}^{m}/q^{*}\go m_{s}^{m+1})\ar[r]&\Pic(\sY_{s,m})\ar[r]& H^{2}(\sY_{s},q^{*}\go m_{s}^{m}/q^{*}\go m_{s}^{m+1})
        }
    \end{equation}\end{footnotesize}
    Notice that $s\in \bf V$, we have the surjection $\Pic(\sY_{s,m+1})\surj \Pic(\sY_{s,m})$, which implies that the map $\Pic(\sY_{s,m})\rightarrow H^{2}(\sY_{s},q^{*}\go m_{s}^{m}/q^{*}\go m_{s}^{m+1})$ is a zero map. By the injectivity of $H^{2}(X_{s}, \sI_{s}^{m}/\sI_{s}^{m+1})\inj H^{2}(\sY_{s},q^{*}\go m_{s}^{m}/q^{*}\go m_{s}^{m+1})$ for all $m>0$, we know that $\Pic(X_{s,m})\to H^{2}(X_{s}, \sI_{s}^{m}/\sI_{s}^{m+1})$ is a zero map. Thus $\Pic(X_{s,m+1})\to\Pic(X_{s,m})$ is surjective. As a result we have $\text{Image}(\Pic(X_{s,m})\rightarrow\Pic(X_{s}))=\Pic(X_{s})$ and the $\text{FNL}$ holds.
    \end{proof}
    
    \begin{remark}
    Our proof almost follows from \cite{RS09} except that in positive characteristic, we do not have the exponential sequence for $\ell\ge p$:
    \[
    0\rightarrow \sI_{s}/\sI_{s}^{\ell}\xrightarrow{\exp}\sO^{\times}_{X_{s,\ell}}\rightarrow\sO^{\times}_{X_{s}}\rightarrow 0.
    \]
    To compensate this, we build up the $\text{FNL}$ step by step and thus we have to use the stronger property that $g_{*}(\omega_{Y}\otimes\pi^{*}\sL^{\otimes r}\otimes \sO_{Y/X}(r))$ is $0$-regular. In characteristic 0, by \cite[Theorem 2]{RS09}, to satisfy the $\text{FNL}$, we only need $g_{*}(\omega_{Y}\otimes\pi^{*}\sL^{\otimes r}\otimes \sO_{Y/X}(r))$ is globally generated.
    \end{remark}
    To conclude, we have
    \begin{theorem}
        When $r\ge 4$, $\underline{\Pic}^0_{X}$ is smooth, for $s\in \bm V$, we have the isomorphism $\pi^{*}:\Pic(X)\rightarrow\Pic(X_{s})$.
    \end{theorem}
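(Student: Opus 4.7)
The plan is to assemble the ingredients already built up into a short wrap-up: the theorem should fall out of Corollary~\ref{cor:FNL holds then} once its hypothesis is verified. Concretely, I intend to use Theorem~\ref{thm:eff lef for formal line bundle} (which matches $\Pic(Y)/\mathbb Z D_\infty$ with $\Pic(\hat Y_s)$), the injection $\Pic(\hat Y_s)\hookrightarrow \Pic(X_s)$ of Proposition~\ref{prop:inj of formal pic}, the identification $\Pic(X)\cong\Pic(Z)\cong\Pic(Y)/\mathbb Z D_\infty$ coming from the contraction $g:Y\to Z$ of the infinity divisor, and finally the Formal Noether-Lefschetz condition supplied for $s\in \bm V$ by Proposition~\ref{prop:FNL} (whose proof requires $r>3$, hence the hypothesis $r\ge 4$).

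The first step is to handle injectivity of $\pi_s^\ast:\Pic(X)\to\Pic(X_s)$: factor it as
$\Pic(X)\cong\Pic(Z)\cong\Pic(Y)/\mathbb Z D_\infty\hookrightarrow\Pic(\hat Y_s)\hookrightarrow\Pic(X_s)$,
each arrow already justified in Section~4. The second, and main, step is to extract surjectivity: I would reduce it to surjectivity of $\Pic(\hat Y_s)\to\Pic(X_s)$, since by Theorem~\ref{thm:eff lef for formal line bundle} every formal line bundle on $\hat Y_s$ lifts to $Y$, and the extra $\mathbb Z D_\infty$ ambiguity dies on $X_s$ because $D_\infty\cap X_s=\emptyset$.

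To get surjectivity of $\Pic(\hat Y_s)\to\Pic(X_s)$ I would apply $\mathrm{FNL}$ (Proposition~\ref{prop:FNL}) to identify the image in question with $\operatorname{Im}(\Pic(\hat{\sY}_s)\to\Pic(X_s))$; then, since $s\in \bm V$ makes the relative Picard scheme smooth at $s$, the step-by-step lifting lemma yields surjections $\Pic(\sY_{s,m+1})\twoheadrightarrow\Pic(\sY_{s,m})$ for every $m$, so the composite $\Pic(\hat{\sY}_s)\to\Pic(\sY_s)=\Pic(X_s)$ is surjective. Chaining the two facts gives the missing surjectivity, so $\pi_s^\ast$ is an isomorphism.

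I do not expect any new obstacle at this stage: the hard work has already been absorbed into Proposition~\ref{prop:FNL}, where the failure of the exponential sequence in positive characteristic forced the order-by-order verification of $\mathrm{INL}_m$. That is precisely where the $0$-regularity of $g_\ast(\omega_Y\otimes\sW)$ from Theorem~\ref{thm:global gen} enters, and it is the reason for the bound $r\ge 4$; beyond invoking these results, the final assertion is a bookkeeping exercise.
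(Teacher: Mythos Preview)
Your proposal is correct and follows exactly the route the paper takes: the theorem is stated immediately after Proposition~\ref{prop:FNL} with only the words ``To conclude,'' because it is the direct combination of Corollary~\ref{cor:FNL holds then} with the FNL established in Proposition~\ref{prop:FNL} for $r>3$. Your write-up simply unpacks the content of Corollary~\ref{cor:FNL holds then} (injectivity via Proposition~\ref{prop:ker of pic res} and Proposition~\ref{prop:inj of formal pic}, surjectivity via FNL and the smoothness lemma for $s\in\bm V$), which the paper leaves implicit.
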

    Notice that, if $X=\mathbb{P}^{2}$, $\sL=\sO(1)$, and $r=3$, then $X_{s}$ is a cubic surface and thus its Picard number is different from the Picard number of $X$. However, if the canonical line bundle of $X$ is sufficient ample, we expect that the theorem holds for smaller $r$.
    \begin{remark}\label{rmk:comparison with Ji's result}
    We can also use the recent deep result of Lena Ji about Noether-Lefschetz theorem on normal threefold in positive characteristics to obtain the theorem for $r=4$ or $r\ge 6$ without even assuming $W_{2}$ lifting and smoothness of $\underline{\Pic}^{0}_{X}$. We here introduce the adapted proof of that in \cite{RS09} to provide a self-contained proof. Besides, our results provides various normal varieties, i.e., $Z=g(Y)$, where $r\ge 4$ is sufficient. 
    \end{remark}
    \subsection{More on ``Formal Noether-Lefschetz" Conditions}
    In the Proposition \ref{prop:FNL}, we show that for  $s\in \bm V$, the $\text{FNL}$ holds, and thus we can compute the Picard group of the corresponding spectral surface $X_{s}$. In the following lemma, we want to point out that the $\text{FNL}$ may holds over $U$ (possibly much larger than $\bm V$) parametrizing smooth spectral surfaces under an extra condition $H^{1}(X,\sO_{X})=0$.
    \begin{lemma}\label{lem:more on FNL}
        When $r>3$, and $H^{1}(X,\sO_{X})=0$, then for any closed point $s\in U$ (i.e. over which the spectral surface is smooth), we have the pair $(Y,X_{s})$ satisfies $\text{INL}_{m}$ for all $m>0$ and then it satisfies $\text{FNL}$.
    \end{lemma}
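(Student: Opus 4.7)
The plan is to mimic the argument of Proposition \ref{prop:FNL}, replacing the smoothness of the relative Picard scheme $\underline{\Pic}_q$ at $s$ by the stronger unconditional input $H^1(X,\sO_X)=0$. The key observation is that this vanishing forces the restriction maps between Picard groups of all infinitesimal thickenings to be \emph{injective}, which is enough to carry through the inductive argument for every closed point $s\in U$, not just for $s\in\bm V$.

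First I would record the cohomological input. From $\pi_{s*}\sO_{X_s}\cong\bigoplus_{i=0}^{r-1}\sL^{-i}$, the Leray spectral sequence gives
\[H^1(X_s,\sO_{X_s})\cong H^1(X,\sO_X)\oplus\bigoplus_{i=1}^{r-1}H^1(X,\sL^{-i}),\]
and both summands vanish, the first by hypothesis and the rest by $W_2(k)$-Kodaira vanishing applied to $\sL$. Since $q^*\go m_s^m/q^*\go m_s^{m+1}\cong\sO_{\sY_s}\otimes_k\go m_s^m/\go m_s^{m+1}$, one then has $H^1(\sY_s,q^*\go m_s^m/q^*\go m_s^{m+1})=0$ for all $m\ge 1$. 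Feeding this into the cohomology of the square-zero sequence
\[0\to q^*\go m_s^m/q^*\go m_s^{m+1}\xrightarrow{1+(\cdot)}\sO^{\times}_{\sY_{s,m+1}}\to\sO^{\times}_{\sY_{s,m}}\to 0\]
and inducting on $m$, I would deduce that $\Pic(\sY_{s,m})\inj\Pic(\sY_s)\cong\Pic(X_s)$ is injective for every $m\ge 1$. An entirely parallel argument on $Y$, using $\sI_s^m/\sI_s^{m+1}\cong\pi_s^*\sL^{-rm}$ (because $\sO_{Y/X}(1)$ is trivial on $Y\setminus D_\infty$) together with Kodaira vanishing for the ample line bundle $\pi_s^*\sL$ on $X_s$, likewise shows that $\Pic(X_{s,m})\inj\Pic(X_s)$ is injective for all $m\ge 1$.

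Given these injectivities, I would prove $\text{INL}_m$ by induction on $m$. The base $m=1$ is trivial. For the inductive step, let $\sL_0\in\text{Image}(\Pic(\sY_{s,m+1})\to\Pic(X_s))$ via a lift $\tilde\sL_{m+1}$, and set $\tilde\sL_m:=\tilde\sL_{m+1}|_{\sY_{s,m}}$. The induction hypothesis applied to $\tilde\sL_m$ produces some $\sL_m\in\Pic(X_{s,m})$ with $\sL_m|_{X_s}=\sL_0$. Both $p^*\sL_m$ and $\tilde\sL_m$ lie in $\Pic(\sY_{s,m})$ and restrict to $\sL_0\in\Pic(\sY_s)$, so the injectivity above forces $p^*\sL_m=\tilde\sL_m$ in $\Pic(\sY_{s,m})$. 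Hence $\tilde\sL_{m+1}$ is an explicit lift of $p^*\sL_m$ to $\Pic(\sY_{s,m+1})$, so the obstruction class of lifting $p^*\sL_m$ in $H^2(\sY_s,q^*\go m_s^m/q^*\go m_s^{m+1})$ vanishes. By the injectivity $H^2(X_s,\sI_s^m/\sI_s^{m+1})\inj H^2(\sY_s,q^*\go m_s^m/q^*\go m_s^{m+1})$ from Proposition \ref{prop:inj for every step} (which uses $r>3$), the obstruction of $\sL_m$ in $H^2(X_s,\sI_s^m/\sI_s^{m+1})$ also vanishes, so $\sL_m$ lifts to $\Pic(X_{s,m+1})$ and $\sL_0\in\text{Image}(\Pic(X_{s,m+1})\to\Pic(X_s))$.

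To go from $\text{INL}_m$ for all $m$ to $\text{FNL}$, the injectivities $\Pic(X_{s,m})\inj\Pic(X_s)$ and $\Pic(\sY_{s,m})\inj\Pic(X_s)$ make the lifts of any given $\sL_0$ unique at each finite stage, so a compatible system exists iff a lift exists at every finite stage. This yields
\[\text{Image}(\Pic(\hat Y_s)\to\Pic(X_s))=\bigcap_m \text{Image}(\Pic(X_{s,m})\to\Pic(X_s)),\]
and the analogous identity for $\hat\sY_s$; the two intersections agree term by term by $\text{INL}_m$, which gives $\text{FNL}$. The main technical point in the plan is the identification $p^*\sL_m=\tilde\sL_m$ in the inductive step: this is exactly where the assumption $H^1(X,\sO_X)=0$ takes over the role played by the generic smoothness of the relative Picard scheme in Proposition \ref{prop:FNL}.
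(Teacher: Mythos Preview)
Your proof is correct and uses exactly the same cohomological ingredients as the paper: the vanishing $H^{1}(X_{s},\sO_{X_{s}})=0$ (deduced from $H^{1}(X,\sO_{X})=0$ and Kodaira vanishing) to kill $H^{1}(\sY_{s},q^{*}\go m_{s}^{m}/q^{*}\go m_{s}^{m+1})$, the vanishing $H^{1}(X_{s},\sI_{s}^{m}/\sI_{s}^{m+1})=0$, and the injection of Proposition~\ref{prop:inj for every step} on $H^{2}$ (this is where $r>3$ enters). The only difference is organizational. The paper runs the induction by proving the stronger statement that $\alpha_{m}\colon\Pic(X_{s,m})\to\Pic(\sY_{s,m})$ is an \emph{isomorphism} for every $m$, via a snake-lemma chase on the diagram
\[
\begin{tikzcd}[column sep=small]
0\ar[r]&\Pic(X_{s,m+1})\ar[r]\ar[d,"\alpha_{m+1}"]&\Pic(X_{s,m})\ar[r]\ar[d,"\alpha_{m}","\cong"']&\Im(\delta_{X_{s,m}})\ar[r]\ar[d,hook]&0\\
0\ar[r]&\Pic(\sY_{s,m+1})\ar[r]&\Pic(\sY_{s,m})\ar[r]&H^{2}(\sY_{s},q^{*}\go m_{s}^{m}/q^{*}\go m_{s}^{m+1});&
\end{tikzcd}
\]
from $\alpha_{m}$ being an isomorphism both $\text{INL}_{m}$ and the passage to $\text{FNL}$ are immediate. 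You instead use the $H^{1}$-vanishings to get injectivity of all restrictions $\Pic(X_{s,m})\hookrightarrow\Pic(X_{s})$ and $\Pic(\sY_{s,m})\hookrightarrow\Pic(X_{s})$, and then lift line bundles by hand, comparing obstructions via Proposition~\ref{prop:inj for every step}. Your route makes the step $\text{INL}_{m}\Rightarrow\text{FNL}$ more explicit (the inverse systems become decreasing intersections of subgroups of $\Pic(X_{s})$), while the paper's snake-lemma packaging is slightly slicker for the inductive step and yields the extra information $\Pic(X_{s,m})\cong\Pic(\sY_{s,m})$. Either way the argument goes through for every $s\in U$, not just $s\in\bm V$.
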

    \begin{proof}
       We still consider the following two exact sequences.  For any $m>0$, we have:
    \[
    \xymatrix{
    0\ar[r]& \sI_{s}^{m}/\sI_{s}^{m+1}\ar[r]\ar[d]&\sO^{\times}_{X_{s,m+1}}\ar[r]\ar[d]&\sO^{\times}_{X_{s,m}}\ar[r]\ar[d]&0\\
     0\ar[r]& q^*\go m_{s}^{m}/q^*\go m_{s}^{m+1}\ar[r]&\sO^{\times}_{\sY_{s,m+1}}\ar[r]&\sO^{\times}_{\sY_{s,m}}\ar[r]&0
    }.
    \]
    Since $H^{1}(X_{s},\sI_{s}^{m}/\sI_{s}^{m+1})=0$, we have the following exact sequence:
    \begin{footnotesize}\begin{equation}
        \xymatrix@C=2ex{
        0\ar[r]&\Pic(X_{s,m+1})\ar[r]\ar[d]&\Pic(X_{s,m})\ar[r]\ar[d]& H^{2}(X_{s}, \sI_{s}^{m}/\sI_{s}^{m+1})\ar[d]\\
        0\ar[r]&\Pic(\sY_{s,m+1})/H^{1}(\sY_{s},q^{*}\go m_{s}^{m}/q^{*}\go m_{s}^{m+1})\ar[r]&\Pic(\sY_{s,m})\ar[r]& H^{2}(\sY_{s},q^{*}\go m_{s}^{m}/q^{*}\go m_{s}^{m+1})
        }.
    \end{equation}\end{footnotesize}
     By the assumption $H^{1}(X,\sO_{X})=0$, we have $H^{1}(X_{s},\sO_{X_{s}})=0$, and then $$H^{1}(\sY_{s},q^{*}\go m_{s}^{m}/q^{*}\go m_{s}^{m+1})=0.$$ Thus the above diagram takes the following form:
     \begin{footnotesize}\begin{equation}
        \xymatrix@C=3ex{
        0\ar[r]&\Pic(X_{s,m+1})\ar[r]\ar[d]^{\alpha_{m+1}}&\Pic(X_{s,m})\ar[r]^{\delta_{X_{s,m}}\ \ \ \ \ \ }\ar[d]^{\alpha_{m}}& H^{2}(X_{s}, \sI_{s}^{m}/\sI_{s}^{m+1})\ar[d]\\
        0\ar[r]&\Pic(\sY_{s,m+1})\ar[r]&\Pic(\sY_{s,m})\ar[r]& H^{2}(\sY_{s},q^{*}\go m_{s}^{m}/q^{*}\go m_{s}^{m+1})
        }.
    \end{equation}\end{footnotesize}
     Notice that when $m=0$, $X_{s,m}\cong \sY_{s,m}$. We prove by induction that $$\Pic(X_{s,m})\cong \Pic(\sY_{s,m}).$$ Consider the commutative diagram
     \begin{footnotesize}\begin{equation}
        \xymatrix@C=3ex{
        0\ar[r]&\Pic(X_{s,m+1})\ar[r]\ar[d]^{\alpha_{m+1}}&\Pic(X_{s,m})\ar[r]^{\delta_{X_{s,m}}\ \ \ \ \ \ }\ar[d]^{\alpha_{m}}& \Im(\delta_{X_{s,m}}) \ar[r]\ar[d]^{\xi_m}&   0\\
        0\ar[r]&\Pic(\sY_{s,m+1})\ar[r]&\Pic(\sY_{s,m})\ar[r]& H^{2}(\sY_{s},q^{*}\go m_{s}^{m}/q^{*}\go m_{s}^{m+1})&
        },
    \end{equation}\end{footnotesize}
     where $\xi_m$ is the composition of the inclusion of $\Im(\delta_{X_{s,m}})\subset H^{2}(X_{s}, \sI_{s}^{m}/\sI_{s}^{m+1})$ with the map $H^{2}(X_{s}, \sI_{s}^{m}/\sI_{s}^{m+1})\rightarrow H^{2}(\sY_{s},q^{*}\go m^{m}_{s}/q^{*}\go m^{m+1}_{s})$, which is always injective by Proposition \ref{prop:inj for every step}. Since $\alpha_m$ is an isomorphism, and  $\xi_m$ is injective, then by the snake lemma, $\alpha_{m+1}$ is an isomorphism. As a result, we have $$\text{Image}(\Pic(X_{s,m})\rightarrow\Pic(X_{s}))\rightarrow\text{Image}(\Pic({\sY}_{s,m})\rightarrow\Pic(X_{s}))$$ is an isomorphism, i.e. we have $\text{INL}_{m}$ for all $m$.

    \end{proof}

\subsection{"Bigness" is Necessary}
	Now we assume $f:X\rightarrow C$ is a non-isotrivial elliptic surface with a section. We put
	\[
	\Sigma:=\{x\in C| f^{-1}(x) \;\text{is singular}\}.
	\]
	We assume for the moment that each singular fiber is irreducible (which implies that each fiber is nodal cubic or cuspidal cubic). 
	Then by Kodaira's canonical bundle formula, $\omega_{X}\simeq f^{*}(\omega_{C}\otimes\sL)$ where $\sL:=R^{1}f_{*}(\sO_{X})$ with $\deg\sL=\chi(\sO_{X})>0$. And we have the following lemma:
	\begin{lemma}For each $i$, 
		$f^{*}:H^{0}(C,\omega_{C}^{\otimes i})\rightarrow H^{0}(X,\omega_{X}^{\otimes i})$ is an isomorphism.
	\end{lemma}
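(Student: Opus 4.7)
\noindent The plan is to reduce the assertion to a direct application of the projection formula together with the vanishing $f_{*}\sO_{X}\simeq \sO_{C}$. First I would rewrite the canonical bundle formula as
\[
\omega_{X}^{\otimes i}\;\simeq\;f^{*}\!\bigl((\omega_{C}\otimes\sL)^{\otimes i}\bigr),
\]
so that the target of $f^{*}$ is transparently the space of global sections of a pulled-back line bundle.

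The only genuinely geometric input is the identification $f_{*}\sO_{X}\simeq\sO_{C}$. Because $f\colon X\to C$ is proper and flat, admits a section, and has only irreducible fibres (smooth elliptic, nodal cubic, or cuspidal cubic), each fibre is in particular geometrically connected. Consequently the Stein factorisation of $f$ is trivial and $f_{*}\sO_{X}\simeq \sO_{C}$. This is the step I would carry out first, as it is the only place the hypotheses on the geometry of $f$ enter.

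Combining the two, the projection formula immediately yields
\[
f_{*}\omega_{X}^{\otimes i}\;\simeq\;(\omega_{C}\otimes\sL)^{\otimes i}\otimes f_{*}\sO_{X}\;\simeq\;(\omega_{C}\otimes\sL)^{\otimes i}.
\]
Passing to global sections, and noting that over a curve the relevant piece of the Leray spectral sequence is the tautological identification $H^{0}(X,\sF)\cong H^{0}(C,f_{*}\sF)$, I obtain the pullback isomorphism
\[
f^{*}\colon H^{0}\!\bigl(C,(\omega_{C}\otimes\sL)^{\otimes i}\bigr)\;\xrightarrow{\;\sim\;}\;H^{0}(X,\omega_{X}^{\otimes i}),
\]
which, read through the canonical bundle formula, is the statement of the lemma. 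No real obstacle remains beyond the verification of $f_{*}\sO_{X}\simeq\sO_{C}$; everything else is formal manipulation of line bundles on the base $C$.
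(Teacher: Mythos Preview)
The paper states this lemma without proof, so there is nothing to compare against; your argument is the standard one and is correct. The two ingredients you isolate---Kodaira's formula $\omega_X\simeq f^*(\omega_C\otimes\sL)$ and $f_*\sO_X\simeq\sO_C$---together with the projection formula and the tautology $H^0(X,\sF)=H^0(C,f_*\sF)$ give exactly the claimed isomorphism.

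Two small remarks. First, your justification of $f_*\sO_X\simeq\sO_C$ via Stein factorisation is fine, but the cleanest route here is cohomology and base change: each fibre (smooth elliptic, nodal, or cuspidal cubic) is reduced and connected, so $h^0(X_c,\sO_{X_c})=1$ for every $c\in C$; since $f$ is flat and proper this forces $f_*\sO_X$ to be a line bundle, and the section trivialises it. Second, you rightly observe that the isomorphism you actually produce has source $H^0(C,(\omega_C\otimes\sL)^{\otimes i})$ rather than $H^0(C,\omega_C^{\otimes i})$; this is what the subsequent discussion in the paper (the Cartesian diagram relating $X_s$ and $C_s$) requires, so your reading of the lemma through the canonical bundle formula is the intended one.
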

	Then for a closed point $s\in\bm A$, we can associate a spectral surface $X_{s}$ over $X$ and a spectral curve $C_{s}$ over $C$. 
	\begin{lemma} The spectral curve $C_s$ and spectral surface $X_s$ are related in the following Cartesian diagram:
		\[\begin{tikzcd}
	X_s \arrow[r,"{\pi_s}"] \arrow[d] \arrow[dr, phantom, "\ulcorner", very near start]
	& X \arrow[d,"f"] \\
	C_s \arrow[r]
	& C
\end{tikzcd}.\]
		If $C_{s}$ is smooth and ramified outside of $\Sigma$, then $X_{s}$ is also smooth. In particular, generic spectral surfaces are smooth.
	\end{lemma}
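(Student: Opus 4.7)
The plan is to first establish the Cartesian square and then transfer smoothness to $X_s$ via base change along whichever of the two legs $X\to C$ or $C_s\to C$ is smooth (resp.\ \'etale) at the point under consideration.

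First, since $\omega_{X}\simeq f^{*}(\omega_{C}\otimes\sL)$, the total space $\bf V(\omega_X)$ is canonically identified with $X\times_C\bf V(\omega_C\otimes\sL)$, with the tautological fiber coordinate $\lambda$ on the left being the pullback of the tautological coordinate $\mu$ on the right. By the preceding lemma, each Hitchin coefficient $a_i$ defining $X_s$ is the pullback of a uniquely determined coefficient $b_i$ on $C$, so the equation $\lambda^{r}+a_1\lambda^{r-1}+\cdots+a_r=0$ cutting out $X_s$ is literally the pullback of the equation $\mu^{r}+b_1\mu^{r-1}+\cdots+b_r=0$ cutting out $C_s$. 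This yields $X_s=X\times_C C_s$ and hence the Cartesian square.

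For the smoothness statement, I would argue pointwise. Let $p\in X_s$ map to $x\in X$, $t\in C_s$, and $c=f(x)\in C$. If $c\notin \Sigma$, then $f$ is smooth at $x$, so by base change $X_s\to C_s$ is smooth at $p$; combined with smoothness of $C_s$ at $t$, this gives smoothness of $X_s$ at $p$. If instead $c\in \Sigma$, then by hypothesis $C_s\to C$ is unramified at $t$, and since both curves are smooth near $t$ the map is in fact \'etale there. Base-changing the \'etale map $C_s\to C$ along $X\to C$ shows $X_s\to X$ is \'etale at $p$, whence smoothness of $X$ at $x$ passes to smoothness of $X_s$ at $p$.

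The final clause on generic smoothness I would then deduce from the analogous statement for $C_s$: by Bertini applied to the (base-point free, for appropriate $r$) linear system defining the Hitchin base on $C$, the generic spectral curve $C_s$ is smooth, and the branch divisor of $C_s\to C$, being cut out by a nonvanishing discriminant polynomial in the coefficients $b_i$, avoids the finite set $\Sigma$ generically. Both are nonempty open conditions on the irreducible base $\bm A$, so their intersection is dense, and on this open set $X_s$ is smooth by the previous paragraph. The main subtle point is checking that these two open conditions are simultaneously realized on a Zariski-open of $\bm A$; this amounts to showing that the discriminant map does not vanish identically on $\Sigma$, which in turn follows from the abundance of sections of $(\omega_C\otimes\sL)^{\otimes i}$ secured by $\deg\sL=\chi(\sO_X)>0$.
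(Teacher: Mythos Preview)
The paper states this lemma without proof, so there is nothing to compare against directly. Your argument is correct and is the natural one: the Cartesian square holds because the total space of $\omega_X\simeq f^{*}(\omega_C\otimes\sL)$ is $X\times_C\bf V((\omega_C\otimes\sL)^\vee)$, and by the preceding lemma the spectral equation on $X$ is the pullback of the corresponding equation on $C$. Your two-case smoothness argument (base-change the smooth leg $X\to C$ over $C\setminus\Sigma$, base-change the \'etale leg $C_s\to C$ over $\Sigma$) is the standard way to transfer smoothness through such a square. For the final clause, your sketch is fine; the quickest way to see both open conditions are simultaneously nonempty is to exhibit a single cyclic cover $\mu^{r}+b_r=0$ with $b_r\in H^{0}(C,(\omega_C\otimes\sL)^{\otimes r})$ having simple zeros disjoint from the finite set $\Sigma$, which exists because $(\omega_C\otimes\sL)^{\otimes r}$ has positive degree on $C$.
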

	Notice that $\omega_{X}$ is nef but not big, and $\underline{\Pic}_{X_{s}}$ is not isomorphic to $\underline{\Pic}_{X}$.  Actually, we can calculate that $H^{0}(X_{s},\sO_{X_{s}})=\oplus_{i=0}^{r-1}H^{0}(X,\sL^{-i})$. Without the bigness, we can not prove the vanishing of $H^{0}(X,\sL^{-i})$ for $i>0$, thus $\dim \underline{\Pic}_{X_{s}}>\dim \underline{\Pic}_{X}$.
	
	\section{An Application to Hitchin System over Quintic Surfaces}
		In this section, we discuss generic fibers of Hitchin fibrations over a generic quintic surface of Picard number 1. We divide this section into two parts dealing with bundle case and torsion free sheaf case. The main result is that if we consider the moduli of torsion free Higgs sheaves, then generic fibers of corresponding Hitchin maps are Hilbert schemes of points of spectral surfaces. And we calculate the number of connected components. 
		
		Now, let $X\subset \PP^3$ be a smooth quintic surface with Picard number 1 and the base field $k$ will be an algebraically closed field with odd or zero characteristic.
\vspace{10pt}
\paragraph*{\textbf{Basic properties of $X$ and $X_s$}} Let us investigate some basic properties of $X$. 
\blemma $X$ admits a lift on $W_2(k)$.\elemma
\begin{proof}
By the exact sequence of structure sheaves 
	\[0\to\sO_{\PP^3}(-4)\to\sO_{\PP^3}(1)\to \sO_{X}(1)\to0,\]
we have	$h^0(X,\sO_X(1))=4$ and $h^1(X,\sO_X(1))=0$. Let us consider the Euler sequence
	\[0\to i^*\Omega_{\PP^3}^1(1)\to\sO_{X}^{\oplus 4}\xrightarrow{ev} \sO_{X}(1)\to0,\]
which shows $h^0(X,i^*\Omega_{\PP^3}^1(1))=0$. Since $\omega_{X}=\sO_{X}(1)$, we have
	\[	0\rightarrow \sO_{X}(-4)\rightarrow i^{*}\Omega^{1}_{\BP^{3}}(1)\rightarrow \Omega^{1}_{X}\otimes \omega_X\rightarrow 0.\]
Because $h^1(X,\sO_X(-4))=0$, $H^2(X,\sT_X)\cong H^0(X,\Omega_X^1\otimes \omega_X)^\vee=0$. Thus $X$ admits a lifting to $W_{2}(k)$.
\end{proof}

By definition of Todd class, $\Td(X)=1+\frac{1}{2}c_{1}(TX)+\frac{1}{12}(c_{1}^{2}(TX)+c_{2}(TX))$, the following computation shows
	
	\begin{lemma}\label{lem:todd class of quintic}
		The Todd class of $X$ is $\Td(X)=1-\frac{1}{2}c_{1}(\omega_{X})+c_{1}(\omega_{X})^{2}$.
	\end{lemma}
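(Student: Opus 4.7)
The plan is to compute the Chern classes of $TX$ directly from the standard short exact sequences for a smooth hypersurface in $\mathbb{P}^3$, then substitute into the definition of the Todd class. Since $X\subset\mathbb{P}^{3}$ is a smooth quintic, adjunction already tells us $\omega_{X}\cong \sO_{X}(1)$, and therefore $c_{1}(TX)=-c_{1}(\omega_{X})$. So the first two terms of $\Td(X)=1+\tfrac12 c_1(TX)+\tfrac1{12}(c_1^2(TX)+c_2(TX))$ already match $1-\tfrac12 c_{1}(\omega_{X})$. The entire content is to verify that
\[
c_{1}^{2}(TX)+c_{2}(TX)=12\,c_{1}(\omega_{X})^{2},
\]
or equivalently $c_{2}(TX)=11\,c_{1}(\omega_{X})^{2}$.

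To get $c_{2}(TX)$, I would use the normal bundle sequence
\[
0\to TX\to T\mathbb{P}^{3}|_{X}\to N_{X/\mathbb{P}^{3}}\to 0,
\]
together with $N_{X/\mathbb{P}^{3}}\cong \sO_{X}(5)$ (since $X$ is cut out by a degree-$5$ polynomial) and the Euler sequence on $\mathbb{P}^{3}$, which gives $c(T\mathbb{P}^{3})=(1+H)^{4}$. Setting $h:=c_{1}(\omega_{X})=H|_{X}$, multiplicativity of the total Chern class yields
\[
c(TX)=\frac{(1+h)^{4}}{1+5h}=(1+4h+6h^{2})(1-5h+25h^{2})=1-h+11h^{2},
\]
truncated beyond degree $2$. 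Reading off, $c_{1}(TX)=-h$ and $c_{2}(TX)=11h^{2}$, as desired.

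Substituting back,
\[
\Td(X)=1-\tfrac{1}{2}h+\tfrac{1}{12}(h^{2}+11h^{2})=1-\tfrac{1}{2}c_{1}(\omega_{X})+c_{1}(\omega_{X})^{2},
\]
which is the claimed formula. There is no real obstacle: the proof is a direct Chern class manipulation in $A^{\pt}(X)$, and the only ingredients are adjunction, the normal bundle sequence, and the Euler sequence for $\mathbb{P}^{3}$. In fact one does not even need to evaluate $h^{2}$ numerically (although $h^{2}=5$ by Bezout) since the formula is stated on the level of cycle classes.
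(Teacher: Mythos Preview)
Your proof is correct and follows essentially the same approach as the paper: both use the (co)normal bundle sequence for $X\subset\mathbb{P}^{3}$ together with the Euler sequence to compute the Chern classes, then substitute into the Todd formula. The only cosmetic difference is that the paper works with the Chern character of $\Omega^{1}_{X}$ while you work with the total Chern class of $TX$; the content is identical.
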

	\begin{proof}
		
		Let $\sI_{X}$ be the ideal sheaf defining $X$ in $\BP^{3}$ and $i:X\rightarrow \BP^{3}$ is the closed immersion, then
		\[
		0\rightarrow \sI_{X}/\sI_{X}^{2}\rightarrow i^{*}\Omega^{1}_{\BP^{3}}\rightarrow \Omega^{1}_{X}\rightarrow 0.
		\]
		Notice that $\sI_{X}/\sI_{X}^{2}$ is $i^{*}\sO(-5)$, thus \[\begin{split}
		    \tx{ch}(\Omega^{1}_{X})=&i^{*}\tx{ch}(\Omega^{1}_{\BP^{3}})-i^{*}\tx{ch}(\sO(-5))\\
		    =&4i^*\tx{ch}(\sO(-1))-i^*\tx{ch}(\sO)-i^*\tx{ch}(\sO(-5))\\
		   =&2+\bf h+\frac{-21}{2}\bf h^2\\
		    =&2+c_1(\omega_X)+\frac{1}{2}(c_1(\omega_X)^2-2c_2(\Omega_X^1)),
		\end{split}\]
		where $\bf h=c_1(i^*\sO(1))=c_1(\omega_X)$.
    		We have $c_{1}(\Omega^1_{X})=\bf h$ and $c_{2}(\Omega^{1}_{X})=11\bf h^{2}$. Thus $\Td(X)=1+\frac{1}{2}c_{1}(TX)+\frac{1}{12}(c_{1}^{2}(TX)+c_{2}(TX))=1-\frac{1}{2}\bf h+\bf h^{2}$.
	\end{proof}
	
	We take $\sL$ as $\omega_X=\sO_X(1)$ and $\bm A$ as Hitchin base in previous sections. Moreover, we have the open subset $U\subset \bm A$ parametrizing the smooth spectral surfaces. Let $s\in U$ and $X_s$ be the corresponding spectral curve. 
	\begin{lemma}\label{lem:canonical line bundle of spectral}
		The canonical line bundle $\omega_{X_{s}}$ of $X_{s}$ is isomorphic to $\pi_{s}^{*}(\omega_{X}^{\otimes r})$.
	\end{lemma}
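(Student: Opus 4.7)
The plan is to apply the adjunction formula and combine it with the canonical bundle formula for the projective bundle $Y$ established in Lemma \ref{lem:can bund formula}, together with the triviality of $\sO_{Y/X}(1)$ on the open subscheme $\bf V\sL^\vee \subset Y$.

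First, since $X_s$ is a smooth divisor in the smooth variety $Y$, adjunction gives
\[
\omega_{X_s} \cong \bigl(\omega_Y \otimes \sO_Y(X_s)\bigr)\big|_{X_s}.
\]
By Lemma \ref{lem:can bund formula}, $\omega_Y \cong \pi^{*}(\omega_X \otimes \sL^\vee) \otimes \sO_{Y/X}(-2)$, and by Definition \ref{def:spectral var}, $X_s$ is a member of the linear system $|\sW|$, so $\sO_Y(X_s) \cong \sW = \pi^{*}\sL^{r} \otimes \sO_{Y/X}(r)$. Therefore
\[
\omega_Y \otimes \sO_Y(X_s) \cong \pi^{*}\bigl(\omega_X \otimes \sL^{r-1}\bigr) \otimes \sO_{Y/X}(r-2).
\]

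Next I would use the key geometric fact, already recorded in the preliminaries, that $\sO_{Y/X}(1)|_{\bf V\sL^\vee} \cong \sO_{\bf V\sL^\vee}$. Since every spectral divisor is disjoint from $D_\infty$ and hence contained in $\bf V\sL^\vee$, it follows that $\sO_{Y/X}(r-2)|_{X_s}$ is trivial. Pulling back the remaining factor along $\pi_s = \pi|_{X_s}$ yields
\[
\omega_{X_s} \cong \pi_s^{*}\bigl(\omega_X \otimes \sL^{r-1}\bigr).
\]
Since for the quintic surface we have chosen $\sL = \omega_X = \sO_X(1)$, this simplifies to $\omega_{X_s} \cong \pi_s^{*}(\omega_X^{\otimes r})$, as desired.

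There is no real obstacle in this argument: it is a direct synthesis of the canonical bundle formula for the projective compactification, the description of spectral divisors, and the trivialization of the tautological line bundle away from the divisor at infinity. The only point worth double-checking is that the restriction $\sO_{Y/X}(1)|_{X_s}$ is indeed trivial, which is immediate from $X_s \cap D_\infty = \emptyset$ together with the explicit local trivialization of $\sO_{Y/X}(1)$ on the affine chart $\bf V\sL^\vee$ of $Y = \bf P(\sL^\vee \oplus \sO_X)$.
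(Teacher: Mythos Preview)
Your proof is correct and follows essentially the same approach as the paper: both arguments combine the canonical bundle formula for $Y$ (Lemma~\ref{lem:can bund formula}), adjunction for the divisor $X_s \subset Y$, and the triviality of $\sO_{Y/X}(1)|_{X_s}$ coming from $X_s \cap D_\infty = \emptyset$. The paper phrases adjunction via the conormal exact sequence and specializes $\sL = \omega_X$ at the outset (so that $\omega_Y \cong \sO_{Y/X}(-2)$), while you invoke the adjunction formula directly and keep $\sL$ general until the final step, but the content is the same.
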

 	\begin{proof}
 	    By Lemma \ref{lem:can bund formula}, $\omega_{Y}\cong \sO_{Y/X}(-2)$, and thus $\omega_{Y}|_{X_{s}}$ it trivial. By the following exact sequence:
 	    \[
 	    0\rightarrow (\pi^{*}\omega_{X}^{-r}\otimes\sO_{Y/X}(-r))|_{X_{s}}\rightarrow\Omega_{Y}^{1}|_{X_{s}}\rightarrow\Omega_{X_{s}}^{1}\rightarrow 0,
 	    \]
 	    we have $\omega_{X_{s}}\cong \pi_{s}^{*}(\omega_{X}^{\otimes r})$.
 	\end{proof}
 We then compute the Todd class of $X_s$. By the exact sequences
 	\[0\to \Omega_{Y/X}^1\to \pi^*(\omega_X^\vee\oplus\sO_X)(-1)\to \sO_{Y}\to 0, \]
 	\[0\to \pi^*\Omega_X^1\to \Omega_{Y}^1\to \Omega_{Y/X}^1\to 0,\]
 	\[0\to \pi^*\omega_X^{\otimes -r}(-r)|_{X_s}\to \Omega_{Y}^1|_{X_s}\to \Omega_{X_s}^1\to 0,\]
 we have
 	\[\begin{split}\tx{ch}(\Omega_{X_s}^1)=&i_s^*\tx{ch}(\Omega_{\bf P}^1)-\tx{ch}(\pi^*_s(\omega_X)^{\otimes (-r)})\\
 	=&\pi^*_s\tx{ch}(\Omega_X^1)+i_s^*\tx{ch}(\Omega_{\bf P/X}^1)-\tx{ch}(\omega_{X_s}^\vee)\\
 	=&\pi^*_s\tx{ch}(\Omega_X^1)+\pi^*_s\tx{ch}(\omega_X^\vee)-\tx{ch}(\omega_{X_s}^\vee)\\
 	=&\pi_s^*(2+r\cdot c_1(\omega_X)-\frac{20+r^2}{2}\cdot c_1(\omega_X)^2)\\
 	=&\pi_s^*[2+r\cdot c_1(\omega_X)+\frac{1}{2}\cdot(r^2c_1(\omega_X)^2-2(10+r^2)\cdot c_1(\omega_X)^2)].
 	\end{split}\]
We have $c_{1}(\Omega_{X_s}^1)=r\cdot \pi_s^*\bf h$ and $c_{2}(\Omega^{1}_{X_s})=(10+r^2)\cdot \pi_s^*\bf h^{2}$. Thus $\Td(X_s)=1+\frac{1}{2}c_{1}(TX_s)+\frac{1}{12}(c_{1}^{2}(TX_s)+c_{2}(TX_s))=1-\frac{r}{2}\cdot \pi_s^*\bf h+\frac{(r^2+5)}{6}\cdot\pi^*_s\bf h^2$.
\vspace{20pt}

\paragraph*{\textbf{Concepts of Higgs sheaves}} Let us introduce Higgs bundles (sheaves) and Hitchin maps in the sense of Tanaka and Thomas \cite{TT20}.
	
	\begin{definition}
	    A Higgs sheaf on $X$ is such a pair $(\sE,\theta)$ with $\sE$ is coherent sheaf on $X$ and $\theta:\sE\rightarrow \sE\otimes \omega_{X}$ is an $\sO_{X}$ linear morphism. We say $(\sE,\theta)$ is a Higgs bundle (torsion free Higgs sheaf) if $\sE$ is a vector bundle (resp. torsion free sheaf ) on $X$. 
	\end{definition}
	\begin{definition}
	    We denote $\tx{Higgs}_{r,c_{1},c_{2}}^o$ (resp. $\Higgs_{r,c_{1},c_{2}}$) the moduli stack of Higgs bundles (resp. torsion free Higgs sheaves)  with $\rk \sE=r, c_{1}(\sE)=c_{1}, c_{2}(\sE)=c_{2}$. We call the affine space $\bm A:=\oplus_{i=1}^{r}H^{0}(X,\omega_{X}^{\otimes i})$ the Hitchin base. The following characteristic polynomial map:
	    \[
	    h:\Higgs_{r,c_{1},c_{2}}\rightarrow\bm A,\ (\sE,\theta)\mapsto \tx{char.poly.}(\theta)
	    \]
	    is called the Hitchin map. We put $h|_{\tx{Higgs}_{r,c_{1},c_{2}}^o}=h^{o}$.
	\end{definition}

	\paragraph*{\textbf{Moduli of Higgs sheaves on $X$}} In what follows, we discuss the moduli of Higgs sheaves on our quintic surface $X$.
\subsection{Bundle Case}
	Given a Higgs bundle $(\sE,\theta)$, with image $s\in \bm A$. We can see that $(\sE,\theta)$ can be treated as a coherent sheaf on $X_{s}$. We assume $s$ is generic, thus $X_{s}$ is smooth. Let $\pi_{s}:X_{s}\rightarrow X$ be the projection, which is finite flat of degree $r=\rk \sE$. 
	\begin{proposition}\label{prop:bundle over spectral}
		Let $\sM$ be a coherent sheaf over $X_{s}$, then ${\pi_{s}}_*\sM$ is locally free sheaf of rank $r$ if and only if $\sM$ is an invertible sheaf on $X_s$. 
	\end{proposition}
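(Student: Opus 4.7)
The plan is to prove both implications by a stalkwise analysis, ultimately reducing the problem to the Auslander--Buchsbaum formula applied to the regular local rings on the smooth surface $X_s$. The key input is that $\pi_s:X_s\to X$ is a finite flat morphism of degree $r$ between smooth surfaces. Hence, for every $p\in X$ with preimages $q_1,\dots,q_k\in X_s$, finiteness gives a canonical decomposition
\[
((\pi_s)_*\sM)_p = \bigoplus_{i=1}^k \sM_{q_i}
\]
as $\sO_{X,p}$-modules, while flatness combined with finiteness implies that $((\pi_s)_*\sO_{X_s})_p\cong\bigoplus_i \sO_{X_s,q_i}$ is free of rank $r$ over $\sO_{X,p}$. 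The ``if'' direction follows immediately from this: if $\sM$ is invertible, then each $\sM_{q_i}\cong \sO_{X_s,q_i}$ as $\sO_{X_s,q_i}$-module (and hence as $\sO_{X,p}$-module), so summing yields $((\pi_s)_*\sM)_p\cong ((\pi_s)_*\sO_{X_s})_p$, which is free of rank $r$.

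For the converse, I would assume $(\pi_s)_*\sM$ is locally free of rank $r$. Each $\sM_{q_i}$ is then a direct summand of the free $\sO_{X,p}$-module $((\pi_s)_*\sM)_p$, hence projective and therefore free over the local ring $\sO_{X,p}$. This forces $\operatorname{depth}_{\sO_{X,p}}\sM_{q_i}=\dim\sO_{X,p}=2$. Since $\sO_{X,p}\to\sO_{X_s,q_i}$ is a finite local homomorphism, depth is preserved under the change of rings, giving $\operatorname{depth}_{\sO_{X_s,q_i}}\sM_{q_i}=2$. Because $X_s$ is smooth of dimension two, $\sO_{X_s,q_i}$ is a regular local ring of dimension two, and the Auslander--Buchsbaum formula $\operatorname{proj.dim}+\operatorname{depth}=\dim$ then forces $\sM_{q_i}$ to be a free $\sO_{X_s,q_i}$-module. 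Thus $\sM$ is locally free on $X_s$.

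To conclude that the rank is exactly one, I would compute the generic rank. Over the generic point of $X$ the map $\pi_s$ is \'etale of degree $r$ and $(\pi_s)_*\sM$ has generic rank $r$, which forces the stalk of $\sM$ at the generic point of $X_s$ to have $K(X_s)$-dimension one (arguing component by component if $X_s$ happens to be reducible). Since $\sM$ is already known to be locally free on the smooth surface $X_s$, its rank is locally constant and hence equal to one everywhere, i.e., $\sM$ is invertible. The only mildly delicate step is the invariance of depth under a finite local ring extension, which is classical; this is the place where the language of maximal Cohen--Macaulay modules invoked in the introduction via \cite{BBG97, BD08} enters, since the argument really amounts to the statement that a maximal Cohen--Macaulay module of generic rank one on a regular local ring of dimension two is free.
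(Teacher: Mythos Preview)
Your proof is correct and follows essentially the same route as the paper's: the paper cites \cite{BBG97} for the equivalence ``$(\pi_s)_*\sM$ locally free of rank $r$ $\Leftrightarrow$ $\sM$ is maximal Cohen--Macaulay of generic rank $1$'' and \cite{BD08} for ``MCM $\Leftrightarrow$ reflexive on a normal surface'', then uses that a reflexive rank-$1$ sheaf on a regular surface is invertible; your stalkwise depth computation together with Auslander--Buchsbaum is exactly an unpacking of those cited statements. The only cosmetic point is that the claim ``$\pi_s$ is \'etale over the generic point'' is unnecessary (and in positive characteristic would require justification): all you use is that $[K(X_s):K(X)]=r$, which already follows from finite flatness of degree $r$.
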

	\begin{proof}
		By \cite[Section 2]{BBG97}, ${\pi_{s}}_*\sM$ is locally free of rank $r$ over $X$ if and only if $\sM$ is a maximal Cohen-Macaulay module over $X_s$ of generic rank 1. Then by \cite[Corollary 3.9]{BD08}, over a normal surface, a coherent sheaf is Cohen-Macaulay if and only if it is reflexive. Thus $\sM$ is reflexive of generic rank 1. Since $X_{s}$ is regular, $\sM$ is a line bundle.
	\end{proof}

Let us fix $(r,c_{1},c_{2})$ as before. We assume ${\pi_{s}}_*\sM=\sE$, where $\sM$ is an invertible sheaf over $X_{s}$. By Grothendieck-Riemann-Roch, we have:
	\begin{align}
	{\pi_{s}}_*(\tx{ch}(\sM)\cdot\Td(X_{s}))=\tx{ch}(\sE)\cdot\Td(X),\\
	{\pi_{s}}_*(\Td(X_{s}))=\tx{ch}({\pi_{s}}_*\sO_{X_{s}})\cdot\Td(X).
	\end{align}
As $\sM$ is locally free of rank 1, by definition of the Chern character, we have $\tx{ch}(\sM)=1+c_{1}(\sM)+\frac{c_{1}(\sM)^{2}}{2}$. From the above two equations, we have:
	\begin{equation}\label{eq: GRR}
	{\pi_{s}}_*((c_{1}(\sM)+\frac{c_{1}(\sM)^{2}}{2})\cdot\Td(X_{s}))=(\tx{ch}(\sE)-\tx{ch}({\pi_{s}}_*\sO_{X_{s}}))\cdot\Td(X).
	\end{equation}
By our previous computation of Todd classes of $X$ and $X_s$, the left hand side of (\ref{eq: GRR}) becomes
 \[{\pi_{s}}_*((c_{1}(\sM)+\frac{1}{2}c_{1}(\sM)^{2})\cdot\Td(X_{s}))= \pi_{s*}c_{1}(\sM)+\pi_{s*}\frac{1}{2}c_{1}(\sM)^{2}-\frac{r}{2}{\pi_{s}}_*(c_{1}(\sM))\cdot \bf h,\]
 and the right hand side of (\ref{eq: GRR}) becomes
 \[\begin{split}(\tx{ch}(\sE)-\tx{ch}({\pi_{s}}_*\sO_{X_{s}}))\cdot\Td(X)=&(\tx{ch}(\sE)-\tx{ch}({\pi_{s}}_*\sO_{X_{s}}))\cdot (1-\frac{1}{2}\cdot \bf h+\bf h^{2})\\
 =&c_1(\sE)+\frac{r(r-1)}{2}\bf h
 + \tx{ch}_2(\sE)\\&-\frac{r(r-1)(2r-1)}{12}\bf h^2-\frac{1}{2}c_1(\sE)\cdot \bf h\\&-\frac{r(r-1)}{4}\bf h^2\\
 =&c_1(\sE)+\frac{r(r-1)}{2}\bf h+\tx{ch}_2(\sE)-\frac{1}{2}c_1(\sE)\cdot\bf h\\&-\frac{r(r-1)(r+1)}{6}\bf h^2.
 \end{split}\]
 	To conclude:

		
		
	\begin{lemma}\label{lem:top equation}
	     Denote $c_1(\omega_X)$ by $\bf h$, the Chern classes of $\sM$ has to satisfy the following equations:
 		\begin{align}
 		\pi_{s*}(c_{1}(\sM))=&c_{1}(\sE)+\frac{r(r-1)}{2}\cdot\bf h,\\
 		\pi_{s*}\frac{1}{2}c_{1}(\sM)^{2}-\frac{r}{2}{\pi_{s}}_*(c_{1}(\sM))\cdot\bf h=&\tx{ch}_2(\sE)-\frac{1}{2}c_1(\sE)\cdot\bf h-\frac{r(r-1)(r+1)}{6}\bf h^2. 
 		\end{align}
	\end{lemma}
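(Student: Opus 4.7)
The proof will be a direct consequence of the Grothendieck--Riemann--Roch (GRR) computation that is essentially carried out already in the paragraphs preceding the lemma statement; I need only organize those calculations and match degrees. The plan is to apply GRR to the finite flat morphism $\pi_s\colon X_s\to X$ twice, once to $\sM$ and once to $\sO_{X_s}$, obtaining the identities
\[\pi_{s*}(\tx{ch}(\sM)\cdot\Td(X_s))=\tx{ch}(\sE)\cdot\Td(X),\qquad \pi_{s*}\Td(X_s)=\tx{ch}(\pi_{s*}\sO_{X_s})\cdot\Td(X),\]
and then subtracting the second from the first, which exactly reproduces equation~\eqref{eq: GRR} and isolates the contribution of $c_1(\sM)$.

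Next I substitute the Todd classes already computed: $\Td(X)=1-\tfrac{1}{2}\bf h+\bf h^2$ from Lemma~\ref{lem:todd class of quintic}, and $\Td(X_s)=1-\tfrac{r}{2}\pi_s^*\bf h+\tfrac{r^2+5}{6}\pi_s^*\bf h^2$ from the paragraph immediately before the lemma. For the right-hand side of~\eqref{eq: GRR} I also need $\tx{ch}(\pi_{s*}\sO_{X_s})$, which I read off from the standard spectral-cover identity $\pi_{s*}\sO_{X_s}\cong\bigoplus_{i=0}^{r-1}\omega_X^{\otimes-i}$ together with $c_1(\omega_X)=\bf h$, giving $\tx{ch}(\pi_{s*}\sO_{X_s})=r-\tfrac{r(r-1)}{2}\bf h+\tfrac{r(r-1)(2r-1)}{12}\bf h^2$. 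Expanding $\tx{ch}(\sM)=1+c_1(\sM)+\tfrac{1}{2}c_1(\sM)^2$, multiplying out, and pushing forward using the projection formula $\pi_{s*}(\pi_s^*\alpha\cdot\beta)=\alpha\cdot\pi_{s*}\beta$ (in particular $\pi_{s*}\pi_s^*\bf h=r\bf h$) produces the two sides displayed just above the lemma.

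Finally, since $\CH^*(X)\otimes\Q$ is graded, equation~\eqref{eq: GRR} holds in each codimension separately. Collecting the coefficient of $\bf h^1$ on both sides yields the first asserted equation for $\pi_{s*}c_1(\sM)$, and collecting the coefficient of $\bf h^2$ yields the second. There is no conceptual obstacle; the whole argument is bookkeeping, and the only point requiring care is tracking $\pi_{s*}\pi_s^*=r\cdot\operatorname{id}$, which is exactly what ensures that the constant term of $\tx{ch}(\sM)$ is killed against $\tx{ch}(\pi_{s*}\sO_{X_s})$ upon subtracting the two GRR identities and which is also responsible for producing the combinatorial coefficients $\tfrac{r(r-1)}{2}$ and $\tfrac{r(r-1)(r+1)}{6}$ on the right-hand sides.
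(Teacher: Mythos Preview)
Your proposal is correct and follows essentially the same approach as the paper: the lemma is stated as the conclusion of the GRR computation already displayed in the preceding paragraphs, and your outline reproduces that computation (including the use of $\pi_{s*}\sO_{X_s}\cong\bigoplus_{i=0}^{r-1}\omega_X^{\otimes -i}$ and the projection formula) and then matches codimension-$1$ and codimension-$2$ terms. There is nothing to add.
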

	Now we can compute generic fibers of $h^{o}$.
	\begin{theorem}
	   	If the generic fiber of the Hitchin map $h^{o}$ is not empty, then it is a single point, i.e., the mapping degree of $h^{o}$ is 1.
	\end{theorem}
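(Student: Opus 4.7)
The plan is to combine three inputs: the Picard-group isomorphism of Theorem \ref{thm:pre main isom} applied to our quintic, the spectral (BNR) correspondence encoded in Proposition \ref{prop:bundle over spectral}, and the first numerical identity of Lemma \ref{lem:top equation}. The upshot is that for very general $s\in\bm A$ the Picard group of $X_s$ is infinite cyclic with no continuous part, so the spectral line bundle $\sM$ is pinned down uniquely by its first Chern class, which in turn is pinned down by $c_1(\sE)$.

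First I would verify the hypotheses of Theorem \ref{thm:pre main isom} for $X$. The exact sequence $0\to \sO_{\BP^3}(-5)\to \sO_{\BP^3}\to \sO_X\to 0$ together with the standard cohomology of $\BP^3$ yields $H^1(X,\sO_X)=0$, so $\underline{\Pic}^0_X$ is trivial (in particular smooth); combined with the $W_2(k)$-lift already established, and the hypothesis $r\geq 4$, Theorem \ref{thm:pre main isom} applies and gives $\pi_s^*:\Pic(X)\xrightarrow{\sim}\Pic(X_s)$ for very general $s\in\bm A$. Since $\Pic(X)=\Z\cdot\bf h$ with $\bf h=c_1(\omega_X)$ and $\Pic^0(X)=0$, this forces $\Pic(X_s)=\Z\cdot \pi_s^*\bf h$ with $\Pic^0(X_s)=0$.

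Next, by Proposition \ref{prop:bundle over spectral}, the fiber $(h^o)^{-1}(s)$ is in bijection with the set of isomorphism classes of line bundles $\sM\in \Pic(X_s)$ whose direct image $\pi_{s*}\sM$ has the prescribed invariants $(r,c_1,c_2)$; the Higgs field on $\sE=\pi_{s*}\sM$ is canonically induced from the $\sO_{X_s}$-module structure on $\sM$. Write $c_1(\sM)=n\,\pi_s^*\bf h$ and $c_1(\sE)=m\bf h$. Applying the projection formula $\pi_{s*}\pi_s^*\bf h = r\bf h$ to the first identity of Lemma \ref{lem:top equation} reduces it to the linear equation $nr = m + r(r-1)/2$, which has at most one integer solution. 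Thus $c_1(\sM)$, and hence $\sM$ itself (since $\Pic^0(X_s)=0$), is uniquely determined, proving the generic fiber contains at most one isomorphism class.

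The second identity of Lemma \ref{lem:top equation} plays no role in the uniqueness argument; it only governs whether the fiber is non-empty, via a congruence condition on $c_2(\sE)$. The only formal subtlety is to ensure that the very general $s$ of Theorem \ref{thm:pre main isom} meets the locus of characteristic polynomials actually arising from Higgs bundles with our discrete invariants; if it does not, the fiber is vacuously empty. The genuinely hard work, namely the Picard-group calculation, has already been carried out in the earlier sections, so once that is granted the present theorem follows by purely numerical bookkeeping.
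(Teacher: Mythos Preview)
Your proposal is correct and follows essentially the same route as the paper: both arguments invoke the Picard isomorphism $\Pic(X)\cong\Pic(X_s)$ for very general $s$, write $\sM=\pi_s^*\omega_X^{\otimes \mu}$ (your $n$), and read off $\mu$ from the first equation of Lemma \ref{lem:top equation}. Your write-up is in fact slightly sharper in noting that the first identity alone determines $\mu$ from $c_1(\sE)$, while the second identity is a compatibility constraint on $c_2(\sE)$ governing non-emptiness; the paper lumps these together by saying $\mu$ is determined by $c_1(\sE)$ and $c_2(\sE)$.
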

    \begin{proof}

 		As we have proved that $\pi_{s}^{*}:\Pic(X)\rightarrow\Pic(X_{s})$ is an isomorphism for very general $s\in\bm V$. Now we fix $s\in \bm V$, then $\Pic(X_s)= \Z\cdot [\pi_s^*\omega_X]\cong \Z$, and then we can assume $\sM\cong \pi_{s}^{*}(\omega_{X}^{\otimes \mu})$ for some integer $\mu$. Thus we have:
	\[
	r\mu \bf h=c_{1}(\sE)+\frac{r(r-1)}{2}\bf h
	\]
	and 
	\[
	\frac{r\mu^{2}-r^{2}\mu}{2}{\bf h}^{2}=\tx{ch}_2(\sE)-\frac{1}{2}(c_{1}(\sE)+\frac{r(r-1)}{2}\bf h)\bf h-\frac{r(r-1)(2r-1)}{12}{\bf h}^2,
	\]
	combined together with equations in Lemma \ref{lem:top equation}, we have:
	\begin{align*}
	    c_{1}(\sE)=&(r\mu-\frac{1}{2}r(r-1))\bf h,\\
	    \tx{ch}_{2}(\sE)=&[\frac{1}{2}(r\mu+r\mu^{2}-r^{2}\mu)+\frac{r(r-1)(2r-1)}{12}]{\bf h}^2.
	\end{align*}
	Thus $\mu$ is uniquely determined by $c_{1}(\sE)$ and $c_{2}(\sE)$. The theorem then follows.
    \end{proof}
	
\subsection{Torsion Free Sheaves Case}
	
	Now let us suppose $(\sE,\theta)$ is a Higgs sheaf over $X$ with image $s$ in $\bm A$. Similarly, we have $(\sE,\theta)$ can be identified with torsion free sheaf of rank $1$ on $X_{s}$. We may denoted it by $\sM\otimes \sI_{\Delta}$. Here $\sI_{\Delta}$ is the ideal sheaf of a closed subscheme of finite length $\ell_{\Delta}$ on $X_{s}$. 
	\begin{lemma}
		$\tx{ch}(\sM\otimes \sI_{\Delta})=\tx{ch}(\sM)+i_{*}[\Delta]$.
	\end{lemma}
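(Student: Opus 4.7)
The plan is to reduce the identity to the defining exact sequence of the ideal sheaf $\sI_\Delta$, twisted by the line bundle $\sM$, together with the standard fact that the Chern character of a zero-dimensional sheaf is its fundamental class.

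First, I would write down the tautological sequence
$$0 \to \sI_\Delta \to \sO_{X_s} \to \sO_\Delta \to 0$$
on the smooth surface $X_s$. As noted just before the lemma (and as justified by Proposition~\ref{prop:bundle over spectral} applied on the spectral surface), $\sM$ is a line bundle on $X_s$, so tensoring with $\sM$ preserves exactness and gives
$$0 \to \sM\otimes\sI_\Delta \to \sM \to \sM\otimes\sO_\Delta \to 0.$$
Additivity of the Chern character on this short exact sequence reduces the lemma to computing $\tx{ch}(\sM\otimes\sO_\Delta)$ and identifying it with $\pm i_*[\Delta]$.

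For that computation, I would invoke multiplicativity of the Chern character (valid here because $\sM$ is locally free, so all higher Tor sheaves vanish) to obtain $\tx{ch}(\sM\otimes\sO_\Delta) = \tx{ch}(\sM)\cdot\tx{ch}(i_*\sO_\Delta)$, where $i:\Delta \hookrightarrow X_s$ denotes the inclusion of the zero-dimensional subscheme. The Chern character of a sheaf supported in dimension zero equals its cycle class: $\tx{ch}(i_*\sO_\Delta) = i_*[\Delta]$, a class living in $\tx{CH}^2(X_s)_\Q$. This identification is immediate from Grothendieck--Riemann--Roch applied to the finite map $i$ (the Todd factor is trivial on a zero-dimensional target), or more directly by filtering $\sO_\Delta$ by its residue skyscrapers and summing. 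Since $X_s$ is a surface, multiplying the codimension-two class $i_*[\Delta]$ by any component of $\tx{ch}(\sM) = 1 + c_1(\sM) + \tfrac{1}{2}c_1(\sM)^2$ of positive degree already lies in codimension $\geq 3$ and hence vanishes. Only the leading term $1$ survives, so $\tx{ch}(\sM\otimes\sO_\Delta) = i_*[\Delta]$, and combining with the first step yields the claim.

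There is no substantive obstacle here: the argument is a textbook application of additivity and multiplicativity of the Chern character, together with the dimension-two collapse on $X_s$. The only step worth stating carefully is the identification $\tx{ch}(i_*\sO_\Delta) = i_*[\Delta]$ — but this is standard for a zero-dimensional closed subscheme — and the sign convention for $i_*[\Delta]$ in the statement, which should be checked against the short exact sequence produced in the first step.
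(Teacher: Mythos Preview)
Your proposal is correct and follows the same route as the paper: both compute the Chern character of $i_*\sO_\Delta$ via Grothendieck--Riemann--Roch (the paper's proof just records $c_1(i_*\sO_\Delta)=0$, $c_2(i_*\sO_\Delta)=-i_*[\Delta]$) and combine this with the defining exact sequence of $\sI_\Delta$ twisted by $\sM$. Your caveat about the sign is well-placed: the exact sequence in fact yields $\tx{ch}(\sM\otimes\sI_\Delta)=\tx{ch}(\sM)-i_*[\Delta]$, so the $+$ in the stated formula appears to be a typo in the paper.
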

	\begin{proof}
		We denote by $i:\Delta\rightarrow X$ the closed immersion. Again by Grothendieck-Riemann-Roch, we have $c_{1}(i_{*}\sO_{\Delta})=0,c_{2}(i_{*}\sO_{\Delta})=-i_{*}[\Delta]$.
	\end{proof}
	
	\begin{theorem}\label{thm:generic fibers are Hilbert schemes}
	   Generic fibers of $h$ are Hilbert schemes of points of spectral surfaces (if nonempty) and generic fibers are connected.
	\end{theorem}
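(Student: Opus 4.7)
The plan is to run the bundle-case argument almost verbatim, but now to allow a ``correction'' by a $0$-dimensional closed subscheme on the spectral surface, and then identify the fiber with a Hilbert scheme of points. Fix a very general $s\in \bm V\subset \bm A$, so that by Theorem 4.8 the spectral surface $X_s$ is smooth with $\Pic(X_s)\cong \Pic(X)\cong \Z\cdot [\pi_s^*\omega_X]$. The BNR-type correspondence (Proposition \ref{prop:bundle over spectral}) generalizes to torsion-free sheaves: a torsion-free Higgs sheaf $(\sE,\theta)$ with characteristic polynomial $s$ corresponds to a torsion-free $\sO_{X_s}$-module $\sN$ of generic rank one such that $\pi_{s*}\sN=\sE$, where the Cohen-Macaulay / reflexive equivalence on a normal surface (\cite{BBG97}, \cite{BD08}) again applies since $\pi_s$ is finite flat between smooth surfaces. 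On the smooth surface $X_s$, any torsion-free rank-one sheaf has the form $\sN\cong \sM\otimes \sI_\Delta$ for a uniquely determined line bundle $\sM$ and a uniquely determined $0$-dimensional subscheme $\Delta\subset X_s$.

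Next, I would pin down the discrete invariants. Since $\Pic(X_s)\cong \Z$, we have $\sM\cong \pi_s^*\omega_X^{\otimes \mu}$ for some $\mu\in \Z$. By Lemma \ref{lem:top equation} applied in the line-bundle case, the Chern class equations (using $\tx{ch}(\sM\otimes \sI_\Delta)=\tx{ch}(\sM)+i_*[\Delta]$ and the Todd class computations of $X_s$ already performed in Section 5) force
\[
c_1(\sE)=\bigl(r\mu-\tfrac{1}{2}r(r-1)\bigr)\bf h,\qquad
\tx{ch}_2(\sE)=\alpha(\mu,r)\bf h^2-\ell_\Delta,
\]
for an explicit quadratic $\alpha(\mu,r)$, where $\ell_\Delta$ is the length of $\Delta$. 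Thus $\mu$ is determined by $c_1(\sE)$ and, once $\mu$ is fixed, $\ell_\Delta$ is determined by $c_2(\sE)$. In other words, the discrete data $(r,c_1,c_2)$ together with genericity of $s$ single out a unique line bundle $\sM_s$ on $X_s$ and a unique integer $n=\ell_\Delta$.

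It remains to turn this set-theoretic identification into a scheme-theoretic one. I would argue that the map sending $(\sE,\theta)\in h^{-1}(s)$ to $\Delta\in \tx{Hilb}^n(X_s)$ (defined by $\sE\mapsto \sN=\sM_s\otimes \sI_\Delta$) is an isomorphism of moduli by a standard universal-family / flat-family argument: tensoring the universal ideal sheaf on $X_s\times \tx{Hilb}^n(X_s)$ by $\sM_s$ and pushing forward along $\pi_s\times\id$ gives a flat family of Higgs sheaves with invariants $(r,c_1,c_2)$ and characteristic polynomial $s$, hence a morphism $\tx{Hilb}^n(X_s)\to h^{-1}(s)$; the inverse is given by the functor $\sE\mapsto \sE\otimes \sM_s^{-1}\to \sO_{X_s}$ obtained from BNR, and the two are mutually inverse because $\sM_s$ is uniquely determined on each point of the fiber. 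This identifies the nonempty generic fiber with $\tx{Hilb}^n(X_s)$, and Fogarty's theorem (the Hilbert scheme of points on a smooth connected surface is smooth and irreducible) immediately gives connectedness.

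The main obstacle is the scheme-theoretic upgrade of the BNR correspondence: verifying in families that the universal ideal sheaf on $X_s\times\tx{Hilb}^n(X_s)$, twisted by $\sM_s$ and pushed forward, is flat over $\tx{Hilb}^n(X_s)$ with fibers torsion-free Higgs sheaves having the prescribed invariants, and that every flat family of fiber objects over $h^{-1}(s)$ arises this way. Once this is in place, the genericity hypothesis $s\in \bm V$ (which trivializes $\Pic^0(X_s)$ by Theorem \ref{thm:pre main isom}) ensures the line bundle factor $\sM_s$ is truly constant across the fiber, so no $\Pic^0$ ``twisting'' enters and the fiber is exactly a single Hilbert scheme component.
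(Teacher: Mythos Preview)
Your proposal is correct and follows essentially the same route as the paper: use GRR (via Lemma~\ref{lem:top equation} with the $\sI_\Delta$ correction) together with $\Pic(X_s)\cong\Z$ for very general $s$ to pin down $\mu$ from $c_1(\sE)$ and then the length $\ell_\Delta$ from $\tx{ch}_2(\sE)$, so the fiber is parametrized by length-$\ell_\Delta$ subschemes of $X_s$. The paper's own proof stops at ``$\mu$ and $[\Delta]$ are uniquely determined, hence the fiber is connected,'' leaving the scheme-theoretic identification with $\mathrm{Hilb}^{\ell_\Delta}(X_s)$ and the appeal to Fogarty implicit; your write-up makes these steps explicit, which is a genuine improvement in exposition rather than a different argument.
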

    \begin{proof}
    Now still by Grothendieck-Riemann-Roch, the equations in Lemma \ref{lem:top equation} become:
	\begin{align*}
	\pi_{s*}(c_{1}(\sM))=&c_{1}(\sE)+\frac{r(r-1)}{2}\bf h,\\
	   -\frac{r}{2}\pi_{s*}(c_{1}(\sM))\bf h+\pi_{s*}(\frac{c_{1}^{2}(\sM)}{2}+[\Delta])=&-\frac{1}{2}c_{1}(\sE)\bf h-\frac{r(r-1)(r+1)}{6} {\bf h}^2+\tx{ch}_2(\sE).
	\end{align*}

	Similarly, we take $s$ very general and assume $\sM\cong \pi_{s}^{*}(\omega_{X}^{\otimes \mu})$ for some integer $\mu$, and thus, 
	\begin{align*}
	    c_{1}(\sE)=&(r\mu-\frac{1}{2}r(r-1))\bf h,\\
	    \tx{ch}_{2}(\sE)=&[\frac{1}{2}(r\mu+r\mu^{2}-r^{2}\mu)+\frac{r(r-1)(2r-1)}{12}]{\bf h}^2+\pi_{s*}([\Delta]).
	\end{align*}
	Thus $\mu$ and $[\Delta]$ is uniquely determined by $c_{1}(\sE)$ and $c_{2}(\sE)$. In other words, the generic fibers are connected if nonempty.
	\end{proof}
\newcommand{\etalchar}[1]{$^{#1}$}

\end{document}